\newtheorem{theorem}{Theorem}[section]
\newtheorem{lemma}[theorem]{Lemma}
\newtheorem{proposition}[theorem]{Proposition}
\newtheorem{corollary}[theorem]{Corollary}
\newtheorem{definition}[theorem]{Definition}
\newtheorem{example}[theorem]{Example}
\newtheorem{remark}[theorem]{Remark}
\let\originalleft\left
\let\originalright\right
\renewcommand{\left}{\mathopen{}\mathclose\bgroup\originalleft}
\renewcommand{\right}{\aftergroup\egroup\originalright}
\newcommand{\Tr}{\mathop{\mathrm{Tr}}}
\renewcommand{\d}{\/\mathrm{d}\/}
\def\w{\textbf{W}^{\varepsilon}_{{\theta}^{\varepsilon}}}
\def\e{\varepsilon}
\def\L{\mathbb{L}}
\def\A{\mathrm{A}}
\def\I{\mathrm{I}}
\def\F{\mathrm{F}}
\def\C{\mathrm{C}}
\def\f{\mathbf{f}}
\def\J{\mathrm{J}}
\def\B{\mathrm{B}}
\def\D{\mathrm{D}}
\def\y{\mathbf{y}}
\def\Z{\mathrm{Z}}
\def\U{\mathrm{U}}
\def\K{\mathrm{K}}
\def\E{\mathbb{E}}
\def\X{\mathbb{X}}
\def\x{\mathbf{x}}
\def\k{\mathbf{k}}
\def\h{\mathbf{h}}
\def\z{\mathbf{z}}
\def\v{\mathbf{v}}
\def\V{\mathbb{v}}
\def\w{\mathbf{w}}
\def\W{\mathrm{W}}
\def\G{\mathrm{G}}
\def\Q{\mathrm{Q}}
\def\M{\mathrm{M}}
\def\no{\nonumber}
\def\V{\mathbb{V}}
\def\wi{\widetilde}
\def\Q{\mathrm{Q}}
\def\u{\mathrm{U}}
\def\P{\mathrm{P}}
\def\u{\mathbf{u}}
\def\H{\mathbb{H}}
\def\n{\mathbf{n}}
\newcommand{\R}{\mathbb{R}}
\renewcommand{\d}{\/\mathrm{d}\/}
\newcommand{\Addresses}{{
		\footnote{
			\noindent \textsuperscript{1,2}Department of Mathematics, Indian Institute of Technology Roorkee-IIT Roorkee,
			Haridwar Highway, Roorkee, Uttarakhand 247667, INDIA.\par\nopagebreak
			\noindent  \textit{e-mail:} \texttt{Manil T. Mohan: maniltmohan@ma.iitr.ac.in, maniltmohan@gmail.com.}
			
			\textit{e-mail:} \texttt{Ankit Kumar: akumar14@mt.iitr.ac.in.}
			
			\noindent \textsuperscript{*}Corresponding author.
			
			\textit{Key words:} Stochastic convective Brinkman-Forchheimer equations,  Irreducibility, Strong Feller, Invariant measure, Large deviation principle, Occupation measures.
			
			Mathematics Subject Classification (2020): Primary 60F10; Secondary 60J35; 76D03; 37L40.

}}}
\begin{document}
	
	\title[LDP for  2D SCBF equations]{Large deviation principle for occupation measures of  two dimensional stochastic convective Brinkman-Forchheimer equations
		\Addresses}
	
	\author[A. Kumar and M. T. Mohan]
	{Ankit Kumar\textsuperscript{1} and Manil T. Mohan\textsuperscript{2*}}

	\maketitle
	
	\begin{abstract}
		The present work is concerned about  two-dimensional stochastic convective Brinkman-Forchheimer (2D SCBF) equations perturbed by a white noise (non degenerate) in smooth bounded domains in $\R^{2}$. We establish two important properties of the Markov semigroup associated with the solutions of 2D SCBF equations (for the absorption exponent $r=1,2,3$), that is, irreducibility and strong Feller property. These two properties implies the uniqueness of invariant measures and ergodicity also.  Then, we discuss about the ergodic behavior of 2D SCBF equations by providing a Large Deviation Principle (LDP) for the occupation measure for large time (Donsker-Varadhan), which describes the exact rate of exponential convergence. 
	\end{abstract}
	
	\section{Introduction}
	The convective Brinkman-Forchheimer (CBF) equations characterize the motion of incompressible fluid flows in a saturated porous medium. 	In this paper, we consider the CBF  equations in a bounded domain $\mathcal{O}\subset\R^{2}$ with a smooth boundary $\partial\mathcal{O}$ subject to an external random forcing and study some asymptotic behavior of its solutions. The motion of the incompressible fluid  governed by the CBF equations for $(t,\xi)\in[0,T]\times\mathcal{O}$ is given by (see \cite{MTM1}): 
	\begin{equation}\label{1p1}
	\left\{
	\begin{aligned}
	\frac{\partial \u(t,\xi)}{\partial t}-\mu \Delta\u(t,\xi)+(\u(t,\xi)\cdot\nabla)\u(t,\xi)&+\rho\u(t,\xi)+\beta|\u(t,\xi)|^{r-1}\u(t,\xi)\\+\nabla p(t,\xi)&=\mathbf{f}(\xi)+\eta(t,\xi),\\
	\nabla\cdot\u(t,\xi)&=0,
	\end{aligned}
	\right.
	\end{equation}
	where $\u(t,\xi)\in\mathbb{R}^2$ denotes the velocity field at time $t\in[0,T]$ and position $\xi\in\mathcal{O}$, $p(t,\xi)$ represents the pressure field, $\f(\xi)\in\mathbb{R}^2$ stands for a deterministic forcing and $\eta(t,\xi)\in\mathbb{R}^2$ is a white noise type in time. Since our interest is on the long time behavior of \eqref{1p1}, both the forcing terms are assumed to be stationary in order to have an autonomous system (the white noise is by definition a stationary process). We can also consider these equations as a modification (by an absorption term $\rho\u+\beta|\u|^{r-1}\u$) of the classical 2D Navier-Stokes equations, and one can consider  it as the damped Navier-Stokes equations. The constant $\mu$ represents the positive Brinkman coefficient (effective viscosity), the positive constants $\rho$ and $\beta$ denote the Darcy (permeability of porous medium) and Forchheimer (proportional to the porosity of the material) coefficient, respectively. The absorption exponent $r\in[1,\infty)$ and  $r=3$ is known as the critical exponent. We associate \eqref{1p1} with the homogeneous Dirichlet boundary condition
	\begin{align}\label{1p2}
	\u(t,\xi)=\mathbf{0},\ \text{ for }\ t\in[0,T],\ \xi\in\partial\mathcal{O}, 
	\end{align} 
	and the initial condition 
	\begin{align}\label{1p3}
	\u(0,\xi)=\x(\xi), \ \text{ for }\ \xi\in\mathcal{O}. 
	\end{align}
	We impose the following condition for the uniqueness of the pressure $p(\cdot,\cdot)$: 
	\begin{align}\label{1p4}
	\int_{\mathcal{O}}p(t,\xi)\d \xi=0, \ \text{ for }\ t\in[0,T]. 
	\end{align}

	The existence and uniqueness of strong solutions (in the probabilistic sense) for 2D and 3D SCBF equations (for $r\geq1$ any $\mu,\beta>0$ in 2D, and for $r> 3$ any $\mu,\beta>0$ and $r=3$ with $2\beta\mu\geq 1$ in 3D) perturbed by multiplicative Gaussian noise in bounded subsets of $\R^{2}$ is obtained in the work \cite{MTM1}. The author in \cite{MTM1} exploited the monotonicity property of the linear and nonlinear operators (local monotonicity for $r\in[1,3]$) and a stochastic generalization of the  Minty-Browder technique to obtain the global solvability results.  Using the classical Faedo-Galerkin approximation and compactness method, the existence of martingale solutions for the stochastic 3D Navier-Stokes equations with nonlinear damping is established in \cite{LHGH1}. The global solvability of 2D and 3D SCBF equations perturbed by multiplicative jump noise is discussed in \cite{MTM4}. 
	
	For the past three decades, the ergodic properties for infinite-dimensional systems have been extensively investigated (cf. \cite{GDJZ}). The ergodic behavior of solutions of the  stochastic  Navier-Stokes equations is well studied in the literature and we refer the interested readers to \cite{GDJZ,GDAD,GDP,ADe,BF,FF,FFBM,MH,MSS}, etc for more details and the references therein. The  existence and uniqueness of invariant measures for the two and three dimensional SCBF equations subjected to multiplicative Gaussian noise as well as jump noise  by using the exponential stability of strong solutions is established in the works \cite{MTM1,MTM4}, respectively. The large time  behavior of  solutions by establishing the existence of random attractors for the stochastic flow generated by the 2D SCBF equations \eqref{1p1}-\eqref{1p4} perturbed by small additive noise and the existence of an invariant measure is discussed in \cite{KKMTM}.  The existence of a random attractor and the existence of a unique invariant measure for the stochastic 3D Navier-Stokes equations with damping driven by a multiplicative noise is established in \cite{LHGH}.  The asymptotic log-Harnack inequality for the transition semigroup associated with the SCBF equations driven by additive as well as multiplicative degenerate noise via the asymptotic coupling method is established in \cite{MTM2}. As applications of the asymptotic log-Harnack inequality, the author  derived the gradient estimate, asymptotic irreducibility, asymptotic strong Feller property, asymptotic heat kernel estimate and ergodicity.

	The theory of large deviations, which provides asymptotic estimates for probabilities of rare events, is one of the active and important research topics in probability theory and has rightly received attention. The framework for the theory of large deviations along with the applications can be found in  \cite{VA}. Several authors  developed  this theory over the years and expanded its applications to variety of areas (see for example, \cite{DZ,DE,ST}, etc).	Many authors have established the Wentzell-Freidlin type large deviation principle (LDP) for different classes of stochastic partial differential equations (cf. \cite{Chow1,KX,SW}, etc).  By employing a weak convergence approach, the Wentzell-Freidlin type large deviation principle  for the two-dimensional stochastic Navier-Stokes equations (SNSE) perturbed by a small multiplicative noise in both bounded and unbounded domains is established in \cite{SSSP}.  Using a weak convergence approach of Budhiraja and Dupuis \cite{BD1},  the Wentzell-Freidlin LDP for the two and three dimensional SCBF equations (in 2D, $r\geq 1$, $\beta,\mu>0$ and in 3D, $r\geq 3$, where $\beta,\mu>0$ for $r>3$ and $\beta\mu>1$ for $r=3$) is established in \cite{MTM}. The large deviations for short time  as well as the exponential estimates on certain exit times associated with the solution trajectory of SCBF equations is also studied in \cite{MTM}. A central limit theorem and a moderate deviation principle (MDP) for  2D SCBF equations  using a variational method based on weak convergence approach is established in \cite{MTM3}. For the stochastic tamed 3D Navier-Stokes equations driven by multiplicative Gaussian noise  in the whole space or on a torus, a large deviation principle of Freidlin-Wentzell type   is established in \cite{MRTZ1}. LDP for the 3D tamed Navier-Stokes equations driven by multiplicative L\'evy noise in periodic domains is established in \cite{HGHL}. Small time large deviations principles  for the stochastic 3D tamed Navier-Stokes equations in bounded domains is established in the work \cite{MRTZ}  and for the stochastic 3D Navier-Stokes equation with damping in bounded domains  is obtained in the work \cite{LHGH1}. Due to the technical difficulties described in the works \cite{MTM1,MTM}, etc some of the above mentioned results may not hold in bounded domains. 
	
	A stochastic partial differential equation (SPDE) is ergodic means that the occupation measures of it's solution converge to a unique invariant measure. A Donsker-Varadhan LDP provides an estimate on the probability of occupation measures deviations from the invariant measure (cf. \cite{DSW,MDD}, etc). Thus, it is important to ask  whether the occupation measures satisfy a Donsker-Varadhan LDP. Likewise Wentzell-Freidlin type LDP, there are several results available in the literature regarding Donsker-Varadhan LDP (cf. \cite{MDD,MG2,MG1,JVNV,JVNV1,LW1} etc and the references therein). A criterion  for  LDP of occupation measures for strong Feller and irreducible Markov processes by checking the hyper-exponential recurrence is provided in \cite{LW1}.  However, the  hyper-exponential recurrence is a very strong condition and  verifying the condition is hard for SPDEs. The author in \cite{MG2} and \cite{MG1} verified the hyper-exponential recurrence  and proved the LDP of occupation measures for  stochastic Burgers equation and 2D SNSE, respectively. Authors in  \cite{JVNV1} investigated an LDP for occupation measures of the Markov process associated with the solutions of a class of dissipative PDE's perturbed by a bounded random kick force. LDP of occupation measures for a class of dissipative PDE's perturbed by an unbounded kick force is studied in \cite{JVNV} and for stochastic reaction-diffusion equations driven by subordinate Brownian motions is established in \cite{RWLX}. Using the hyper-exponential recurrence criterion, the occupation measures LDP for a class of non-linear monotone SPDEs including the stochastic $p$-Laplace equation, stochastic porous medium equation,  stochastic fast-diffusion equation, etc, is proved in \cite{WA}. 
	
	In this work, we first prove the irreducibility and strong Feller property of the Markov semigroup associated with the SCBF equations \eqref{1p1}-\eqref{1p4} for $r=1,2,3$ (for physically relevant cases, linear, quadratic and cubic growth) and hence the uniqueness of invariant measure. The irreducibility and strong Feller property  for the Markov semigroup associated with the 2D SNSE is established in \cite{BF,BF1} (see \cite{FFBM} also),  and we use similar ideas for our model also. We point out here that  the  asymptotic irreducibility and  asymptotic strong Feller property of the transition semigroup associated with the 2D SCBF equations \eqref{1p1}-\eqref{1p4} perturbed by  degenerate noise  are available in \cite{MTM2} for sufficiently large $\mu$ (for $r\in[1,3]$).  In the second part, we discuss about the ergodic behavior of the 2D SCBF equations by proving the Donsker-Varadhan LDP  of the occupation measures with respect to the topology $\tau$   as well as  weak convergence topology.  In order to prove the LDP with respect to topology $\tau$ for SCBF equations, we verify the hyper-exponential recurrence given in \cite{LW1}. Some technical difficulties arise due to the presence of nonlinear damping term $|\u|^{r-1}\u$ appearing in \eqref{1p1}, and we overcome these difficulties by properly choosing the regularity of the noise coefficient. We take the following form of noise in this work (see section \ref{sec2} for functional framework): 
	\begin{align*}
	\mathcal{P}\eta(t,\xi)\d t=\G\d\W(t),
	\end{align*}
	where $\mathcal{P}$ is the Helmholtz-Hodge projection operator (\cite{DFHM}), $\W(t)$ is a standard cylindrical Wiener process in $\H$ (see \cite{DaZ} for more details)  defined on a fixed probability space $(\Omega,\mathscr{F},\mathbb{P})$ and $\G:\H\to\H$ is a bounded linear operator. In order to prove the Donsker-Varadhan LDP for the 2D SCBF equations \eqref{1p1}-\eqref{1p4}, we assume that $\G$ satisfies the following:  
	\begin{align}\label{1p5}
	\D(\A^{2\alpha})\subset\mathrm{Im}(\G)\subset\D(\A^{\frac{1}{2}+\e}), \ \text{ for some }\ \frac{r-1}{2r}<\alpha<\frac{1}{2}, \ \e\in\left(0,2\alpha-\frac{1}{2}\right],
	\end{align}
	for $r=2,3$ and $\frac{1}{4}<\alpha<\frac{1}{2}$ for $r=1$, where $\mathrm{Im}(\G)$ denotes the range of the operator $\G$. That is, for  $r=1,2$ (linear and quadratic growth), we obtain $\frac{1}{4}<\alpha<\frac{1}{2}$, which is same as the case 2D SNSE (cf. \cite{BF,MG1}) and for $r=3$ (cubic growth), we need $\frac{1}{3}<\alpha<\frac{1}{2}$. The first embedding in \eqref{1p5} indicates that the noise is not too degenerate and the second embedding implies $\Q=\Tr(\G^*\G)<\infty$ (Lemma 2.3, \cite{MG1}), and also provides more spatial regularity for the solutions of \eqref{1p1}-\eqref{1p4}. Under \eqref{1p5}, we show that the solution $\u(\cdot)$ of the system \eqref{2.7} (see below) is a Markov process with a unique invariant measure $\nu$ supported by $\D(\A^{\alpha})$. By the uniqueness of invariant measure $\nu$, we know that $\nu$ is ergodic (\cite{GDJZ}) in the sense that $\lim\limits_{T\to\infty}\frac{1}{T}\int_0^T\psi(\u(t,\x))\d t=\int_{\H}\psi(\x)\d\nu,$ $\mathbb{P}$-a.s., for all initial values $\x\in\H$ and all continuous and bounded functions $\psi$. Then, we  follow the work \cite{MG1} for obtaining the  LDP with respect to topology $\tau$ of occupation measures for the 2D SCBF equations \eqref{1p1}-\eqref{1p4}.
	
	 We mention here that the condition on $\G$ given in \eqref{1p5} is restrictive and we impose such a condition on $\G$ to obtain the LDP for occupation measures of 2D SCBF equations (as non-degeneracy and finite trace of $\G^*\G$ is needed). As discussed in \cite{BF1}, in order to prove the existence of a unique invariant measure for  2D SCBF equations on bounded domains, one can relax the condition on $\G$ in the following way (see Remark \ref{rem3.7} below): 
		\begin{align}\label{1p6}
	\D(\A^{\frac{\ell+1}{2}})\subset\mathrm{Im}(\G)\subset\D(\A^{\frac{\ell}{2}+\e}), \ \text{ for some }\ 1< \ell<\frac{3}{2}, \ \e\in\left(0,\frac{1}{2}\right].
	\end{align}
	Note that for $1<\ell<\frac{3}{2}$ and $\frac{r-1}{r}<2\alpha<1,$ we obtain $\D(\A^{\frac{\ell+1}{2}})\subset\D(\A^{2\alpha})\subset\D(\A^{\frac{1}{2}+\e})\subset\D(\A^{\frac{\ell}{2}+\e})$. The restriction on $\ell$ in \eqref{1p6} is due to the technical difficulty arising on bounded domains (cf. \eqref{2p4}, \eqref{2p13}, \eqref{2p19}, Remark \ref{rem3.7}, etc). The case of $\ell=1$ is more delicate and the bounds on the noise become
	\begin{align}\label{1p7}
	\D(\A^{1-\e_2})\subset\mathrm{Im}(\G)\subset\D(\A^{\frac{1}{2}+\e_1}),\ \text{ for some }\ 0<\e_1+\e_2\leq\frac{1}{2}. 
	\end{align}
	 On periodic domains, the restriction on $\ell$ can be removed, and one can assume \eqref{1p6}, for all $\ell\geq 1$, and the existence and uniqueness of invariant measures for 2D SCBF equations in periodic domains will be discussed in a separate work.

	The rest of the paper is structured  as follows. In section \ref{sec2}, we define the linear and nonlinear operators, and provide the necessary functional setting to obtain our main results. Then, we provide the abstract formulation of the 2D SCBF equations \eqref{1p1}-\eqref{1p4} perturbed by non-degenerate additive noise and discuss about the existence and uniqueness of pathwise  strong solution. Section \ref{sec3} is devoted for establishing  the existence and uniqueness of invariant measures for  the 2D SCBF equations (Theorem \ref{main}) under the assumption \eqref{1p5}.  Following similar arguments as in \cite{BF}, we prove that the  Markov  semigroup associated with the strong solution of 2D SCBF equations is strong Feller and irreducible (Propositions \ref{prop3.6} and \ref{prop3.7}).  In section \ref{sec4}, we state and prove our main result on the Donsker-Varadhan LDP of occupation measures for 2D SCBF equations (Theorem \ref{main-t} and Corollary \ref{cor4.2}) by using the hyper-exponential recurrence criterion given in \cite{LW1}.  We first provide some general results about large deviations with different level entropy functionals of Donsker-Varadhan. Then, we prove the crucial exponential estimates for the solutions of the 2D SCBF equations (Proposition \ref{prop4.6}).  In order to establish our main Theorem \ref{main-t}, we first prove the LDP on a restricted space, which is $\M_1(\D(\A^{\alpha}))$ (Lemma \ref{lem4.11}) and then we extend the result  to the required space $\M_1(\H)$. 
	
	\section{Mathematical Formulation}\label{sec2}\setcounter{equation}{0}
	This section provides the necessary function spaces needed to obtain the global solvability results of the system \eqref{1p1}-\eqref{1p4}.   In our analysis, the parameter $\rho$ does not play a major role and we set $\rho$ to be zero in \eqref{1p1}-\eqref{1p4} in the rest of the paper. Also in this section, we discuss the existence and uniqueness of strong solutions of the 2D SCBF equations.
	\subsection{Function spaces} Let $\C_0^{\infty}(\mathcal{O};\R^{2})$ denote the space of all infinitely differentiable functions  ($\R^2$-valued) with compact support in $\mathcal{O}\subset\R^{2}$.  We define 
	$	\mathcal{V}:=\{\u\in\C_0^{\infty}(\mathcal{O},\R^{2}):\nabla\cdot\u=0\}$. Let 
	$	\mathbb{H}$, $	\mathbb{V}$ and $\widetilde{\L}^{p}$ stand for the   closure of $\mathcal{V} $ in the Lebesgue space  $\L^2(\mathcal{O})=\mathrm{L}^2(\mathcal{O};\R^{2})$,
	Sobolev space $\H^1(\mathcal{O})=\mathrm{H}^1(\mathcal{O};\R^{2}),$ and
	Lebesgue space $ \L^p(\mathcal{O})=\mathrm{L}^p(\mathcal{O};\R^{2}),$
	for $p\in(2,\infty)$, respectively. Then under some smoothness assumptions on the boundary (for instance, one can take $\C^2$-boundary), we characterize the spaces $\H$ as 
	$
	\H=\{\u\in\L^2(\mathcal{O}):\nabla\cdot\u=0,\u\cdot\mathbf{n}\big|_{\partial\mathcal{O}}=0\}$,  with norm  $\|\u\|_{\H}^2:=\int_{\mathcal{O}}|\u(x)|^2\d x,
	$
	where $\mathbf{n}$ is the outward normal to $\partial\mathcal{O}$, and $\u\cdot\n\big|_{\partial\mathcal{O}}$ should be understood in the sense of trace in $\H^{-1/2}(\partial\mathcal{O})$ (cf. Theorem 1.2, Chapter 1, \cite{RT}). Similarly, we characterize the spaces $\V$ and $\wi\L^p$ as 
	$
	\V=\{\u\in\H_0^1(\mathcal{O}):\nabla\cdot\u=0\},$  with norm $ \|\u\|_{\V}^2:=\int_{\mathcal{O}}|\nabla\u(x)|^2\d x,
	$ and $\widetilde{\L}^p=\{\u\in\L^p(\mathcal{O}):\nabla\cdot\u=0, \u\cdot\mathbf{n}\big|_{\partial\mathcal{O}}=0\},$ with norm $\|\u\|_{\widetilde{\L}^p}^p=\int_{\mathcal{O}}|\u(x)|^p\d x$, respectively.
	Let $(\cdot,\cdot)$ stand for the inner product in the Hilbert space $\H$ and $\langle \cdot,\cdot\rangle $ represent the induced duality between the spaces $\V$  and its dual $\V'$ as well as $\widetilde{\L}^p$ and its dual $\widetilde{\L}^{p'}$, where $\frac{1}{p}+\frac{1}{p'}=1$. Note that $\H$ can be identified with its dual $\H'$ and we have the Gelfand triple $\V\subset\H\subset\V'$. In the sequel, the Sobolev spaces will be denoted by $\mathbb{W}^{s,p}(\mathcal{O})=\W^{s,p}(\mathcal{O};\R^2)$, for $s\in\R$ and $p\in[1,\infty]$  with $\H^{s}(\mathcal{O})=\mathbb{W}^{s,2}(\mathcal{O})$.
	\vskip 0.2cm
	\noindent\textbf{Notations}: For $\mathrm{E}=\H$ or $\mathrm{E}=\D(\A^{\alpha})$, we  denote the space of probability measures on $\mathrm{E}$ equipped with the Borel $\sigma$-field $\mathcal{B}$ by $\M_{1}(\mathrm{E})$, the space of signed $\sigma$-additive measures of bounded variation on $\mathrm{E}$ by  $\M_{b}(\mathrm{E})$,  the space of all bounded Borel measurable functions on $\mathrm{E}$ by $\mathcal{B}_b(\mathrm{E})$ and the space of all bounded continuous functions on $\mathrm{E}$ by $\C_b(\mathrm{E})$. On the space $\M_{b}(\mathrm{E})$, we consider $\sigma(\M_{b}(\mathrm{E}), \mathcal{B}_{b}(\mathrm{E}))$, the  $\tau$-topology of convergence against measurable and bounded functions which is much stronger than the usual weak convergence topology $\sigma(\M_{b}(\mathrm{E}),\C_{b}(\mathrm{E}))$ (\cite{MDD}, Section 6.2, \cite{ADOZ}). Let us denote
	$\|\cdot\|_{\sup}$ for the supremum norm in $\C_{b}\ (\text{or } \mathcal{B}_{b})$. We denote the duality relation between $\nu \in \M_{b}(\mathrm{E})$ and $\psi \in \mathcal{B}_{b}(\mathrm{E})$  by $ \nu(\psi) := \int_{E} \psi \d \nu.$
	
	
	\subsection{Linear operator}
	Let $\mathcal{P}: \L^p(\mathcal{O}) \to\wi\L^p,$ $p\in[1,\infty)$, denote the Helmholtz-Hodge projection (cf.  \cite{DFHM}). For $p=2$, $\mathcal{P}$ becomes an orthogonal projection (\cite{OAL}). We define (see \cite{RT})
	\begin{equation*}
	\A\u:=-\mathcal{P}\Delta\u,\;\u\in\D(\A):=\V\cap\H^{2}(\mathcal{O}).
	\end{equation*}
	It can be easily seen that the operator $\A$ is a non-negative self-adjoint operator in $\H$ with $\V=\D(\A^{1/2})$ and \begin{align}\label{2.2}\langle \A\u,\u\rangle =\|\u\|_{\V}^2,\ \textrm{ for all }\ \u\in\V, \ \text{ so that }\ \|\A\u\|_{\V'}\leq \|\u\|_{\V}.\end{align}
	Since $\mathcal{O}$ is a bounded domain,  the operator $\A$ is invertible and its inverse $\A^{-1}$ is bounded, self-adjoint and compact in $\H$. 
	Making use of the spectral theorem, we know that the spectrum of $\A$ consists of an infinite sequence $0< \lambda_1\leq \lambda_2\leq\ldots\leq \lambda_k\leq \ldots,$ with $\lambda_k\to\infty$ as $k\to\infty$ of eigenvalues. 
	Furthermore, there exists an orthogonal basis $\{e_k\}_{k=1}^{\infty} $ of $\H$ consisting of eigenvectors of $\A$ such that $\A e_k =\lambda_ke_k$,  for all $ k\in\mathbb{N}$.  We know that any $\u\in\H$ can be expressed as $\u=\sum_{k=1}^{\infty}(\u,e_k)e_k$ and hence $\A\u=\sum_{k=1}^{\infty}\lambda_k(\u,e_k)e_k$, for all $\u\in\D(\A)$. 
	Thus, it is immediate that 
	\begin{align}\label{2.3}
	\|\nabla\u\|_{\mathbb{H}}^2=\langle \A\u,\u\rangle =\sum_{k=1}^{\infty}\lambda_k|(\u,e_k)|^2\geq \lambda_1\sum_{k=1}^{\infty}|(\u,e_k)|^2=\lambda_1\|\u\|_{\mathbb{H}}^2,
	\end{align}
	which is the Poincar\'e inequality.  In the sequel, we require the fractional powers of $\A$ also.  For $\u\in \H$ and  $\alpha>0,$ we define
	$\A^\alpha \u=\sum_{k=1}^\infty \lambda_k^\alpha (\u,e_k) e_k,  \ \u\in\D(\A^\alpha), $ where $\D(\A^\alpha)=\left\{\u\in \H:\sum_{k=1}^\infty \lambda_k^{2\alpha}|(\u,e_k)|^2<+\infty\right\}.$ 
	Here  $\D(\A^\alpha)$ is equipped with the norm 
	\begin{equation} \label{2.4}
	\|\A^\alpha \u\|_{\H}=\left(\sum_{k=1}^\infty \lambda_k^{2\alpha}|(\u,e_k)|^2\right)^{1/2}.
	\end{equation}
	It can be easily seen that $\D(\A^0)=\H$, $\D(\A^{1/2})=\V$ and $\D(\A^{-1/2})=\V'$. We set $\V_\alpha= \D(\A^{\alpha/2})$ with $\|\u\|_{\V_{\alpha}} =\|\A^{\alpha/2} \u\|_{\H}.$   Using Rellich-Kondrachov compactness embedding theorem, we infer that for any $0\leq s_1<s_2,$ the embedding $\D(\A^{s_2})\subset \D(\A^{s_1})$ is  compact. We infer from \cite{OHF} that 
	\begin{align}\label{2p4}
	\D(\A^{\alpha})\approx\left\{\begin{array}{ll}\V\cap\H^{2\alpha},&\text{ for } \ 0<\alpha<\frac{1}{4},\\
\V\cap\H^{2\alpha}_0,&\text{ for } \ \frac{1}{4}<\alpha<1,	\end{array}\right. 
	\end{align} where $\approx$ stands for equality of sets and equivalence of norms. 
	
	We use the following interpolation inequality also in the sequel.  For $\mu',\gamma',\beta'\in\mathbb{R}$ and $ \mu' = \theta' \gamma' +(1-\theta')\beta'$, where $\theta \in [0,1]$, we have
	\begin{align}\label{Interpolation}
	\|\A^{\mu'}\u\|_{\H} \leq \|\A^{\gamma'}\u\|^{\theta'}_{\H}\|\A^{\beta'}\u\|^{1-\theta'}_{\H},
	\end{align}
	for all $\u\in\D(\A^{\mu'})\cap\D(\A^{\gamma'})\cap\D(\A^{\beta'})$. 
	The following fractional form of Gagliardo-Nirenberg inequality (see \cite{CMLP} and \cite{LN}) is also used frequently in the paper. 
	Fix $1\leq q, \; l\leq \infty$ and a natural number $n$. Suppose also that a real number $\theta$ and a non-negative number $j$ are such that \begin{align*}
	\frac{1}{p} = \frac{j}{n}+\bigg(\frac{1}{l}-\frac{m}{n}\bigg)\theta + \frac{1-\theta}{q}, \;\; \frac{j}{m}\leq \theta \leq 1, 
	\end{align*} then we have 
	\begin{align}\label{2.1}
	\|\D^{j}\u\|_{\L^{p}} \leq C\|\D^{m}\u\|^{\theta}_{\L^{l}}\|\u\|^{1-\theta}_{\L^{q}},
	\end{align} 
	for all $\u\in\mathbb{W}^{m,l}(\mathcal{O})\cap\H_0^1(\mathcal{O})$. Using Sobolev's inequality, we also have 
	\begin{align}\label{26}
	\|\u\|_{\wi\L^p}\leq C\|\A^{\alpha}\u\|_{\H}, \ \text{ for all }\ \u\in\D(\A^{\alpha})\ \text{ and }\ \left(\frac{1}{2}-\frac{1}{p}\right)\leq\alpha, \ p\in[2,\infty).
	\end{align}
	\begin{remark}\label{rem2.1}
		For $j=0$, $m=1$ and $l=q=2$, we obtain $$\|\u\|_{\wi\L^p}\leq  C\|\u\|_{\V}^{1-\frac{2}{p}}\|\u\|_{\H}^{\frac{2}{p}},\ \text{ for } \ p\in[2,\infty)\ \text{ and }\ \u\in\V.$$ Thus for $\u\in\mathrm{C}([0,T];\H)\cap\mathrm{L}^2(0,T;\V)$, H\"older's inequality gives 
		\begin{align*}
		\int_0^T\|\u(t)\|_{\wi\L^p}^p\d t\leq T^{\frac{4-p}{2}}\sup_{t\in[0,T]}\|\u(t)\|_{\H}^2\left(\int_0^T\|\u(t)\|_{\V}^2\d t\right)^{\frac{p-2}{2}}, \ \text{ for }\ p\in[2,4],
		\end{align*}
		so that $\mathrm{C}([0,T];\H)\cap\mathrm{L}^2(0,T;\V)\subset\mathrm{L}^p(0,T;\wi\L^p)$, for $p\in[2,4]$. 
	\end{remark}
	\subsection{Bilinear operator}
	Let us define the \emph{trilinear form} $b(\cdot,\cdot,\cdot):\V\times\V\times\V\to\R$ by $$b(\u,\v,\w)=\int_{\mathcal{O}}(\u(x)\cdot\nabla)\v(x)\cdot\w(x)\d x=\sum_{i,j=1}^2\int_{\mathcal{O}}\u_i(x)\frac{\partial \v_j(x)}{\partial x_i}\w_j(x)\d x.$$ If $\u, \v$ are such that the linear map $b(\u, \v, \cdot) $ is continuous on $\V$, the corresponding element of $\V'$ is denoted by $\B(\u, \v)$. We also denote $\B(\u) = \B(\u, \u)=\mathcal{P}(\u\cdot\nabla)\u$.	An integration by parts yields 
	\begin{equation}\label{2.5}
	\left\{
	\begin{aligned}
	b(\u,\v,\v) &= 0,\ \text{ for all }\ \u,\v \in\V,\\
	b(\u,\v,\w) &=  -b(\u,\w,\v),\ \text{ for all }\ \u,\v,\w\in \V.
	\end{aligned}
	\right.\end{equation}
	Using H\"older's inequality, we deduce the following inequality:
	\begin{align*}
	|b(\u,\v,\w)|=|b(\u,\w,\v)|\leq \|\u\|_{\wi\L^4}\|\nabla\w\|_{\H}\|\v\|_{\wi\L^4},\ \text{ for all }\ \u,\v,\w\in\V,
	\end{align*}
	and hence  applying Ladyzhenskaya's inequality (Lemma 1, 2, Chapter 1, \cite{OAL}), we get 
	\begin{align}\label{2.9a}
	\|\mathrm{B}(\u,\v)\|_{\V'}\leq \|\u\|_{\wi\L^4}\|\v\|_{\wi\L^4}\leq \sqrt{2}\|\u\|_{\H}^{1/2}\|\u\|_{\V}^{1/2}\|\v\|_{\H}^{1/2}\|\v\|_{\V}^{1/2}, \ \text{ for all } \ \u,\v\in\V. 
	\end{align}
	Using Sobolev's inequality, we infer that $\|\u\|_{\wi\L^4}\leq C\|\A^{\frac{1}{4}}\u\|_{\H}$, for all $\u\in\D(\A^{\frac{1}{4}})$ and hence we obtain 
	\begin{align*}
	|\langle\B(\u,\u),\v\rangle|\leq C\|\A^{\frac{1}{4}}\u\|_{\H}^2\|\A^{\frac{1}{2}}\v\|_{\H},
	\end{align*}
	for all $\u\in\D(\A^{\frac{1}{4}})$ and $\v\in\D(\A^{\frac{1}{2}})$. A calculation similar to \eqref{2.9a} gives 
	\begin{align}\label{2p9}
	\|\B(\u)-\B(\v)\|_{\V'}\leq \left(\|\u\|_{\wi\L^4}+\|\v\|_{\wi\L^4}\right)\|\u-\v\|_{\wi\L^4}\leq\sqrt{\frac{2}{\lambda_1}}(\|\u\|_{\V}+\|\v\|_{\V})\|\u-\v\|_{\V},
	\end{align}
	so that $\B:\V\to\V'$ is locally Lipschitz. 
	For $\u,\v\in\V$, we further have 
	\begin{align}\label{2p8}
	\langle\B(\u)-\B(\v),\u-\v\rangle&=\langle\B(\u-\v,\v),\u-\v\rangle \leq\|\v\|_{\V}\|\u-\v\|_{\wi\L^4}^2,
	\end{align}
	using H\"older's inequality. The following inequalities are well-known and used in the sequel \cite{Te1}: 
	\begin{align}
	|\langle\B(\u,\v),\w\rangle|&\leq C\|\u\|_{\H}^{1/2}\|\u\|_{\V}^{1/2}\|\v\|_{\V}^{1/2}\|\A\v\|_{\H}^{1/2}\|\w\|_{\H},\ \text{ 	for all }\ \u\in\V, \v\in\D(\A),\w\in\H, \label{219}\\
	|\langle\B(\u,\v),\w\rangle|&\leq C\|\u\|_{\H}^{1/2}\|\A\u\|_{\H}^{1/2}\|\v\|_{\V}\|\w\|_{\H}, \ \text{ for all }\ \u\in\D(\A),\v\in\V,\w\in\H, \label{220}
	\end{align}
the final estimate is just an application of Agmon's inequality.	The following property of bilinear operator is used frequently in the paper. 
	\begin{lemma}[Lemma 2.2, \cite{GM}]\label{lem2.2}
		Let $0\leq \delta <1$. Then there exists some constant $C=C(\rho,\theta,\delta)$ such that
		\begin{align}\label{2.14}
		\|\A^{-\delta} \B(\u,\v)\|_{\H} \leq C \|\A^{\theta}\u\|_{\H}\|\A^{\rho}\v\|_{\H}, \ \text{ for all }\  \u \in \D(\A^{\theta}), \; \v \in \D(\A^{\rho}) ,		\end{align}
		where $\theta>0, \; \rho>0,\; \delta+\theta+\rho\geq 1, \; \delta+\rho>\frac{1}{2}$.
	\end{lemma}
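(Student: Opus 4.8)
Here is how I would approach Lemma~\ref{lem2.2}.

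The plan is to prove \eqref{2.14} by duality, reducing it to a bound on the trilinear form $b$, and then to combine H\"older's inequality with the Sobolev embeddings \eqref{26} and the identification of $\D(\A^{s})$ with an $\H^{2s}$-type Sobolev space given in \eqref{2p4}. Since $\A^{-\delta}$ is bounded and self-adjoint on $\H$ and $\B(\u,\v)=\mathcal{P}\big((\u\cdot\nabla)\v\big)$ with $\mathcal{P}$ the orthogonal projection onto $\H$, one has
\[
\|\A^{-\delta}\B(\u,\v)\|_{\H}=\sup\set{\abs{b(\u,\v,\psi)}\ :\ \psi\in\D(\A^{\delta}),\ \|\A^{\delta}\psi\|_{\H}\leq1}.
\]
Using $\nabla\cdot\u=0$ to write $(\u\cdot\nabla)\v=\Div(\u\otimes\v)$ and integrating by parts (the boundary term vanishes since $\u\in\H$ has zero normal trace on $\partial\mathcal{O}$) yields the two representations
\[
b(\u,\v,\psi)=\int_{\mathcal{O}}(\u\cdot\nabla)\v\cdot\psi\,\d x=-\int_{\mathcal{O}}(\u\otimes\v):\nabla\psi\,\d x ,
\]
which allow the single derivative to be placed either on $\v$ or on $\psi$. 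From here I would split into two cases.

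\emph{Case 1: $\rho\geq\tfrac12$ or $\delta\geq\tfrac12$.} Then the whole derivative fits on a single factor: one uses the first representation if $\rho\geq\tfrac12$ and the second if $\delta\geq\tfrac12$. Applying H\"older's inequality with exponents $q_{1},q_{2},q_{3}$ satisfying $\tfrac1{q_{1}}+\tfrac1{q_{2}}+\tfrac1{q_{3}}=1$ and estimating $\|\u\|_{\wi\L^{q_{1}}}\lesssim\|\A^{\theta}\u\|_{\H}$, $\|\psi\|_{\wi\L^{q_{3}}}\lesssim\|\A^{\delta}\psi\|_{\H}$ by \eqref{26}, together with $\|\nabla\v\|_{\L^{q_{2}}}\lesssim\|\A^{\rho}\v\|_{\H}$ (resp.\ $\|\nabla\psi\|_{\L^{q_{3}}}\lesssim\|\A^{\delta}\psi\|_{\H}$) via \eqref{2p4} and the two-dimensional Sobolev embeddings, one checks that the required lower bounds on $\tfrac1{q_{1}},\tfrac1{q_{2}},\tfrac1{q_{3}}$ add up to at most $1$ precisely when $\delta+\theta+\rho\geq1$. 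Hence admissible exponents exist and \eqref{2.14} follows (in this case $\delta+\rho>\tfrac12$ is automatic).

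\emph{Case 2: $\rho<\tfrac12$ and $\delta<\tfrac12$.} This is the delicate case, since the derivative can no longer be assigned to a single factor. Here I would use the mapping property of $\A^{-\delta}\mathcal{P}\Div$: because $0<1-2\delta\leq1$, this operator maps $\H^{1-2\delta}$ boundedly into $\H$, so that $\|\A^{-\delta}\B(\u,\v)\|_{\H}\leq C\|\u\otimes\v\|_{\H^{1-2\delta}}$, and then invoke the two-dimensional fractional Leibniz (bilinear Sobolev) inequality $\H^{2\theta}\cdot\H^{2\rho}\hookrightarrow\H^{1-2\delta}$, whose hypotheses are $1-2\delta\leq2\theta$, $1-2\delta\leq2\rho$ and $1-2\delta\leq2\theta+2\rho-1$. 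The last is exactly $\delta+\theta+\rho\geq1$; the second is $\delta+\rho\geq\tfrac12$, which holds \emph{strictly} by assumption; and the first, $\delta+\theta\geq\tfrac12$, follows from $\delta+\theta+\rho\geq1$ and $\rho<\tfrac12$. Converting $\|\u\|_{\H^{2\theta}}$ and $\|\v\|_{\H^{2\rho}}$ back to $\|\A^{\theta}\u\|_{\H}$ and $\|\A^{\rho}\v\|_{\H}$ through \eqref{2.2} and \eqref{2p4} then gives \eqref{2.14}, with a constant depending only on the exponents $\delta,\theta,\rho$.

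I expect the main obstacle to be precisely this last case: carrying out the fractional product estimate on the bounded domain $\mathcal{O}$ (most cleanly via a Stein extension to $\R^{2}$ followed by the classical Kato--Ponce/Moser product inequality, keeping track of which factor must carry the $\H^{1-2\delta}$-norm), and verifying that the strict inequality $\delta+\rho>\tfrac12$ is exactly what keeps one away from the failing endpoint of that embedding. Justifying the vanishing of the boundary term by density, and handling the value $\alpha=\tfrac14$ excluded from \eqref{2p4}, are routine points that would cost only a line each.
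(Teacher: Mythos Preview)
The paper does not give a proof of this lemma: it is quoted verbatim as Lemma~2.2 of Giga--Miyakawa \cite{GM} and used as a black box throughout (e.g.\ in the estimates leading to \eqref{Prop2.10a} and \eqref{Step3}). So there is no ``paper's own proof'' to compare your attempt against.

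That said, your sketch is essentially the standard argument and is close in spirit to what Giga--Miyakawa actually do. A couple of points are worth tightening. First, in Case~2 your use of the product rule $\H^{2\theta}\cdot\H^{2\rho}\hookrightarrow\H^{1-2\delta}$ at the borderline $\delta+\theta+\rho=1$ needs the \emph{strict} inequalities $1-2\delta<2\theta$ and $1-2\delta<2\rho$ (equivalently $\delta+\theta>\tfrac12$ and $\delta+\rho>\tfrac12$) to avoid the failing endpoint; you correctly note that $\delta+\rho>\tfrac12$ is assumed, and $\delta+\theta>\tfrac12$ follows from $\delta+\theta+\rho\geq1$ together with $\rho<\tfrac12$, so this is fine once stated carefully. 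Second, the claim that $\A^{-\delta}\mathcal{P}\Div$ maps $\H^{1-2\delta}$ boundedly into $\H$ on a bounded domain with Dirichlet boundary is not entirely innocent: on the domain side one must check the compatibility of $\D(\A^{s})$ with $\H^{2s}$ across the critical value $s=\tfrac14$ (cf.\ \eqref{2p4}), and the boundedness of $\mathcal{P}$ on the relevant negative-order spaces. Carrying this out via a Stein extension and Kato--Ponce on $\R^{2}$, as you propose, is the cleanest route; just be explicit that the extension preserves the $\H^{s}$ norms up to constants and that the divergence-free constraint and zero normal trace survive the duality argument. With those two clarifications your outline is a complete proof.
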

Note that $\B(\u,\u)$ does not vanish on the boundary and $\B(\u,\v)\cdot\mathbf{n}=0$ only on the boundary (for $\theta>1/4$), and  from the definition \eqref{2p4}, it is clear that (for $\eta>\theta$) \begin{align}\label{2p13}\langle\B(\u),\A^{\eta}\u\rangle=\langle\A^{\theta}\B(\u),\A^{\eta-\theta}\u\rangle, \ \text{ for }\ \theta<\frac{1}{4} \ \mbox{ only}.\end{align}
For $0<\theta<\frac{1}{4}$, using Lemma 2.1, \cite{Te1}, we obtain 
\begin{align*}
|\langle\A^{\theta}\B(\u),\v\rangle|=|\langle\B(\u),\A^{-\theta}\v\rangle|\leq C\|\A^{\theta+\frac{1}{2}}\u\|_{\H}^2\|\v\|_{\H},
\end{align*}
for all $\v\in\H$, so that 
\begin{align}\label{2p14}
\|\A^{\theta}\B(\u)\|_{\H}\leq C\|\A^{\theta+\frac{1}{2}}\u\|_{\H}^2,
\end{align}
for all $\u\in\D(\A^{\theta+\frac{1}{2}})$. 
	
	\subsection{Nonlinear operator}\label{sub2.4}
	Let us now define the operator $\mathcal{C}(\u):=\mathcal{P}(|\u|^{r-1}\u)$, for $r=1,2,3$. It can be easily seen that $\langle\mathcal{C}(\u),\u\rangle =\|\u\|_{\widetilde{\L}^{r+1}}^{r+1}$. Furthermore, for all $\u\in\wi\L^{r+1}$, the map is Gateaux differentiable with Gateaux derivative 
	
	\begin{align}\label{29}
	\mathcal{C}'(\u)\v&=\left\{\begin{array}{cc}\mathcal{P}(\v),&\text{ for }r=1,\\ \left\{\begin{array}{cc}\mathcal{P}(|\u|\v)+\mathcal{P}\left(\frac{\u}{|\u|}(\u\cdot\v)\right),&\text{ if }\u\neq \mathbf{0},\\\mathbf{0},&\text{ if }\u=\mathbf{0},\end{array}\right.&\text{ for } r=2,\\ \mathcal{P}(|\u|^{2}\v)+2\mathcal{P}(\u(\u\cdot\v)), &\text{ for }r= 3,\end{array}\right.
	\end{align}
	for all $\v\in\widetilde{\L}^{r+1}$.  For $\u,\v\in\wi\L^{r+1}$, it is immediate that 
	\begin{align}\label{2.9}
	\langle\mathcal{C}'(\u)\v,\v\rangle=\int_{\mathcal{O}}|\u(x)|^{r-1}|\v(x)|^2\d x+(r-1)\int_{\mathcal{O}}|\u(x)|^{r-3}|\u(x)\cdot\v(x)|^2\d x\geq 0,
	\end{align}
	for $r= 1,2,3$. For $r= 3$, we further have 
	\begin{align}\label{30}
	\mathcal{C}''(\u)(\v\otimes\w)&=2\mathcal{P}\left\{\left[(\u\cdot\w)\v+(\u\cdot\v)\w+(\w\cdot\v)\u\right]\right\},
	\end{align}
	for all $\u,\v,\w\in\widetilde{\L}^{4}$. For $0<\theta<1$ and $\u,\v\in\widetilde{\L}^{r+1}$, using Taylor's formula (Theorem 7.9.1, \cite{PGC}), we find
	\begin{align*}
	\langle \mathcal{P}(|\u|^{r-1}\u)-\mathcal{P}(|\v|^{r-1}\v),\w\rangle&\leq \|(|\u|^{r-1}\u)-(|\v|^{r-1}\v)\|_{\widetilde{\L}^{\frac{r+1}{r}}}\|\w\|_{\widetilde{\L}^{r+1}}\nonumber\\&\leq \sup_{0<\theta<1}r\|\theta\u+(1-\theta)\v\|_{\widetilde{\L}^{r+1}}^{r-1}\|\u-\v\|_{\widetilde{\L}^{r+1}}\|\w\|_{\widetilde{\L}^{r+1}}\nonumber\\&\leq r\left(\|\u\|_{\widetilde{\L}^{r+1}}+\|\v\|_{\widetilde{\L}^{r+1}}\right)^{r-1}\|\u-\v\|_{\widetilde{\L}^{r+1}}\|\w\|_{\widetilde{\L}^{r+1}},
	\end{align*}
	for all $\w\in\widetilde{\L}^{r+1}$. Thus, it is immediate that 
	\begin{align}\label{214}
	\|\mathcal{C}(\u)-\mathcal{C}(\v)\|_{\wi\L^{\frac{r+1}{r}}}\leq r\left(\|\u\|_{\widetilde{\L}^{r+1}}+\|\v\|_{\widetilde{\L}^{r+1}}\right)^{r-1}\|\u-\v\|_{\widetilde{\L}^{r+1}},
	\end{align}
	for all $\u,\v\in\wi\L^{r+1}$ and hence $\mathcal{C}:\wi\L^{r+1}\to\wi\L^{\frac{r+1}{r}}$ (also from $\V\to\V'$ due to the embedding of $\V\subset\wi\L^{r+1}\subset\wi\L^{\frac{r+1}{r}}\subset\V'$) is a locally Lipschitz operator. 
	Moreover, we have (\cite{MTM1})
	\begin{align}\label{2.23}
	&\langle\mathcal{P}(\u|\u|^{r-1})-\mathcal{P}(\v|\v|^{r-1}),\u-\v\rangle\geq \frac{1}{2}\||\u|^{\frac{r-1}{2}}(\u-\v)\|_{\H}^2+\frac{1}{2}\||\v|^{\frac{r-1}{2}}(\u-\v)\|_{\H}^2\geq 0.
	\end{align}
	It is important to note that 
	\begin{align}\label{a215}
	\|\u-\v\|_{\wi\L^{r+1}}^{r+1}\leq 2^{r-2}\||\u|^{\frac{r-1}{2}}(\u-\v)\|_{\L^2}^2+2^{r-2}\||\v|^{\frac{r-1}{2}}(\u-\v)\|_{\L^2}^2,
	\end{align}
	for $r\geq 1$ (replace $2^{r-2}$ with $1,$ for $1\leq r\leq 2$). Note that $\mathcal{C}(\u)\big|_{\partial\mathcal{O}}\neq \mathbf{0}$ (for $r>1$) and from the definition \eqref{2p4}, it is immediate that (for $\eta>\theta$) \begin{align}\label{2p19}\langle\mathcal{C}(\u),\A^{\eta}\u\rangle=\langle\A^{\theta}\mathcal{C}(\u),\A^{\eta-\theta}\u\rangle, \ \text{ for }\ \theta<\frac{1}{4} \ \mbox{ only}.\end{align}
	Using the chain rule available in Theorem A.6, \cite{CEK}, one can estimate $\|\A^{\theta}\mathcal{C}(\u)\|_{\H}$ as 
	\begin{align}\label{2p22}
	\|\A^{\theta}\mathcal{C}(\u)\|_{\H}\leq C\|\u\|_{\wi\L^{p(r-1)}}^{r-1}\|\u\|_{\mathbb{W}^{2\theta,q}}
	\end{align}
	for all $\u\in\mathbb{W}^{2\theta,q}\cap\wi\L^{p(r-1)}$,  $0\leq \theta<\frac{1}{4}$ and $\frac{1}{p}+\frac{1}{q}=\frac{1}{2}$, where $p,q\in(1,\infty)$. 
	
	The following result provides the local monotonicity property of the operator $\G(\u)=\mu \A\u+\B(\u)+\beta\mathcal{C}(\u)$, which is useful in obtaining the global solvabililty of the system \eqref{1p1}-\eqref{1p4}. 
	\begin{lemma}[\cite{MTM1}]\label{thm2.2}
		Let $\u,\v\in\V$. Then,	for the operator $\G(\u)=\mu \A\u+\B(\u)+\beta\mathcal{C}(\u)$, we  have 
		\begin{align}\label{fe2}
		\langle(\G(\u)-\G(\v),\u-\v\rangle+ \frac{27}{32\mu ^3}N^4\|\u-\v\|_{\H}^2\geq 0,
		\end{align}
		for all $\v\in{\mathbb{B}}_N$, where ${\mathbb{B}}_N$ is an $\widetilde{\L}^4$-ball of radius $N$, that is,
		$
		{\mathbb{B}}_N:=\big\{\z\in\widetilde{\L}^4:\|\z\|_{\widetilde{\L}^4}\leq N\big\}.
		$
	\end{lemma}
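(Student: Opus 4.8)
The plan is to write $\w=\u-\v$ and split the pairing according to the three operators that make up $\G$:
\[
\langle\G(\u)-\G(\v),\w\rangle=\mu\langle\A\u-\A\v,\w\rangle+\langle\B(\u)-\B(\v),\w\rangle+\beta\langle\mathcal{C}(\u)-\mathcal{C}(\v),\w\rangle ,
\]
where each pairing is the $\V$--$\V'$ duality, which is legitimate since in two dimensions and for $r=1,2,3$ both $\B(\u)$ and $\mathcal{C}(\u)$ lie in $\V'$ whenever $\u\in\V$. The Stokes part is immediate from \eqref{2.2}: $\mu\langle\A\u-\A\v,\w\rangle=\mu\|\w\|_{\V}^2$. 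For the Forchheimer part the nonlinear damping works in our favour at no cost: the monotonicity estimate \eqref{2.23} gives $\langle\mathcal{C}(\u)-\mathcal{C}(\v),\w\rangle\geq 0$, so $\beta\langle\mathcal{C}(\u)-\mathcal{C}(\v),\w\rangle$ may simply be discarded, and in contrast with the three-dimensional critical case no balance condition between $\beta$ and $\mu$ is needed. Thus the whole statement reduces to a lower bound for the convective term alone.

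For the convective term I would use the cancellation laws \eqref{2.5}. Writing $\B(\u)-\B(\v)=\B(\u,\w)+\B(\w,\v)$ and using $\langle\B(\u,\w),\w\rangle=b(\u,\w,\w)=0$ yields $\langle\B(\u)-\B(\v),\w\rangle=\langle\B(\w,\v),\w\rangle=b(\w,\v,\w)=-b(\w,\w,\v)$. Choosing this particular splitting is the key point: it puts $\v$ in the middle slot, and $\v\in\mathbb{B}_N$, whereas the symmetric splitting would leave $\u$ there, which we do not control. Then Hölder's inequality together with $\|\v\|_{\wi\L^4}\leq N$ gives $|b(\w,\w,\v)|\leq\|\w\|_{\wi\L^4}\|\nabla\w\|_{\H}\|\v\|_{\wi\L^4}\leq N\|\w\|_{\wi\L^4}\|\w\|_{\V}$, and Ladyzhenskaya's inequality in the two-dimensional form $\|\w\|_{\wi\L^4}^{2}\leq c\,\|\w\|_{\H}\|\w\|_{\V}$ (cf.\ \eqref{2.9a}) turns this into $|b(\w,\w,\v)|\leq c^{1/2}N\|\w\|_{\H}^{1/2}\|\w\|_{\V}^{3/2}$.

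The last step is Young's inequality with conjugate exponents $\tfrac{4}{3}$ and $4$, applied so that the full factor $\|\w\|_{\V}^{3/2}$ is absorbed into $\mu\|\w\|_{\V}^2$; optimizing the splitting parameter produces a bound of the shape $c^{1/2}N\|\w\|_{\H}^{1/2}\|\w\|_{\V}^{3/2}\leq\mu\|\w\|_{\V}^2+\frac{27}{32\mu^3}N^4\|\w\|_{\H}^2$, the $N^4/\mu^3$ dependence being dictated by the homogeneities in Young's inequality and the precise numerical constant $\tfrac{27}{32}$ being whatever falls out of the constant used in Ladyzhenskaya's inequality. Collecting the three estimates gives
\[
\langle\G(\u)-\G(\v),\w\rangle\;\geq\;\mu\|\w\|_{\V}^2-\mu\|\w\|_{\V}^2-\frac{27}{32\mu^3}N^4\|\w\|_{\H}^2\;=\;-\frac{27}{32\mu^3}N^4\|\w\|_{\H}^2 ,
\]
which is exactly \eqref{fe2}. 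There is no genuine obstacle here: the only things that require care are picking the cancellation so that the $\widetilde{\L}^4$-radius of $\v$ (and not of $\u$) enters the estimate, and tracking the constants through Ladyzhenskaya's and Young's inequalities to land on the stated value. Up to the harmless nonnegative damping contribution, this is the classical local monotonicity computation for the two-dimensional Navier--Stokes equations.
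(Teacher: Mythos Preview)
Your argument is correct and is precisely the standard route to this local monotonicity estimate; the paper itself does not reprove the lemma (it is quoted from \cite{MTM1}), but the tools you invoke---coercivity \eqref{2.2}, the cancellation identities \eqref{2.5}, Ladyzhenskaya's inequality \eqref{2.9a}, and the monotonicity \eqref{2.23} of $\mathcal{C}$---are exactly the ones the paper records in Section~\ref{sec2} for this purpose, and your organisation of them (in particular the rewriting $\langle\B(\u)-\B(\v),\w\rangle=-b(\w,\w,\v)$ so that $\|\v\|_{\widetilde{\L}^4}\leq N$ enters rather than $\|\u\|_{\widetilde{\L}^4}$) is the intended one. One small remark on the constant: with the Ladyzhenskaya constant $\sqrt{2}$ used in \eqref{2.9a}, your Young step actually produces the sharper coefficient $\tfrac{27}{128\mu^3}$; the stated $\tfrac{27}{32\mu^3}$ reflects a slightly cruder interpolation constant in \cite{MTM1}, but since only an upper bound on the negative term is needed, either value establishes \eqref{fe2}.
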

	\subsection{Abstract formulation}
	Let $(\Omega,\mathscr{F},\mathbb{P})$ be a complete probability space equipped with an increasing family of sub-sigma fields $\{\mathscr{F}_t\}_{0\leq t\leq T}$ of $\mathscr{F}$ satisfying  the usual conditions. We take the Helmholtz-Hodge projection $\mathcal{P}$ in \eqref{1p1}-\eqref{1p4} to obtain  the abstract formulation as: 
	\begin{equation}\label{2.7}
	\left\{
	\begin{aligned}
	\d\u + \{\mu \A\u + \B(\u) + \beta \mathcal{C}(\u) \} \d t & = \f \d t + \G\d \W(t), \ \ t\geq 0,\\
	\u(0)&=\x, 
	\end{aligned}
	\right.
	\end{equation} 
	where   $ \x \in \H,\ \f \in \V' $ and $ \W(t),\ t\geq 0,$ is cylindrical Wiener process in $\H$ defined on the given filtered probability space $(\Omega, \mathscr{F}, \{\mathscr{F}_{t} \}_{t\geq 0}, \mathbb{P})$.	
	The operator $\G:\H \to \H $ is bounded and satisfies \eqref{1p5}.
	\begin{example}[\cite{MG1}]
		1. We know that the  cylindrical Wiener process $\{\W(t) : 0\leq t\leq T\}$  on $(\Omega,\mathscr{F},\{\mathscr{F}_t\}_{t\geq 0},\mathbb{P})$  can be expressed as  $\W(t)=\sum\limits_{k=1}^\infty  \beta_k(t)e_k,$ where $\beta_k(t), k\in \mathbb{N}$ are independent, one dimensional Brownian motions on the space $(\Omega,\mathscr{F},\{\mathscr{F}_t\}_{t\geq 0},\mathbb{P})$ (cf. \cite{DaZ}). Let us define $\G e_k=\sigma_k e_k$, for $k=1,2,\ldots$, so that $$\G\W(t)=\sum_{k=1}^{\infty}\sigma_k\beta_k(t)e_k.$$ In two dimensions, we know that $\lambda_k\sim k$ as $k\to\infty$. Thus, the condition given in \eqref{1p5} becomes $$\frac{c}{k^{2\alpha}}\leq \sigma_k\leq \frac{C}{k^{\frac{1}{2}+\e}},$$ for some positive constants $c$ and $C$, and large enough $k$. Thus, the cylindrical Wiener process with values in $\D(\A^{2\alpha})$, that is, when $\sigma_k\lambda_k^{2\alpha}=1$ is allowed.

		2. An another example of noise for which our assumption holds for $\frac{r-1}{2r} <\alpha < \frac{1}{2}$ and $r=2,3$ ($\frac{1}{4}<\alpha<\frac{1}{2}$ for $r=1$), fixed  is $\G := \A^{-\beta}\F$ where $\F$ is any linear bounded and invertible operator on $\H$ and $ \frac{1}{2}<\beta <2\alpha$. 
	\end{example}
	\subsection{Solution of SCBF equations} In this subsection, we provide the results regarding the existence of unique global  strong solution to the system \eqref{2.7}.
	\begin{definition}[Global strong solution]
		Let $\x\in\H$ and $\f\in\V'$ be given. An $\H$-valued $\{\mathscr{F}_t\}_{t\geq 0}$-adapted stochastic process $\u(\cdot)$ is called a \emph{strong solution} to the system \eqref{2.7} if the following conditions are satisfied: 
		\begin{enumerate}
			\item [(i)] the process $\u\in\mathrm{L}^4(\Omega;\mathrm{L}^{\infty}(0,T;\H))\cap\mathrm{L}^2(\Omega;\mathrm{L}^2(0,T;\V))\cap\mathrm{L}^{r+1}(\Omega;\mathrm{L}^{r+1}(0,T;\widetilde{\L}^{r+1}))$ and $\u(\cdot)$ has a $\V$-valued  modification, which is progressively measurable with continuous paths in $\H$ and $\u\in\C([0,T];\H)\cap\mathrm{L}^2(0,T;\V)$, $\mathbb{P}$-a.s.,
			\item [(ii)] the following equality holds for every $t\in [0, T ]$, as an element of $\V',$ $\mathbb{P}$-a.s.,
			\begin{align*}
			\u(t)&=\x-\int_0^t\left[\mu \A\u(s)+\B(\u(s))+\beta\mathcal{C}(\u(s))\right]\d s+\f t+\int_0^t \G\d \W(s),
			\end{align*}
			\item [(iii)] the following It\^o's formula holds true: 
			\begin{align*}
			&	\|\u(t)\|_{\H}^2+2\mu \int_0^t\|\u(s)\|_{\V}^2\d s+2\beta\int_0^t\|\u(s)\|_{\L^{r+1}}^{r+1}\d s\nonumber\\&=\|{\x}\|_{\H}^2+2\int_0^t\langle\f,\u(s)\rangle\d s+\Tr(\G\G^*)t+2\int_0^t(\G\d\W(s),\u(s)),
			\end{align*}
			for all $t\in(0,T)$, $\mathbb{P}$-a.s.
		\end{enumerate}
	\end{definition}
	An alternative version of condition (ii) is to require that for any  $\v\in\V$ and $t\in(0,T)$:
	\begin{align*}
	(\u(t),\v)&=(\x,\v)-\int_0^t\langle\mu \A\u(s)+\B(\u(s))+\beta\mathcal{C}(\u(s))-\f,\v\rangle\d s+\int_0^t\left(\G\d \W(s),\v\right),\ \mathbb{P}\text{-a.s.}
	\end{align*}	
	\begin{definition}
		A strong solution $\u(\cdot)$ to (\ref{2.7}) is called a
		\emph{pathwise  unique strong solution} if
		$\widetilde{\u}(\cdot)$ is an another strong
		solution, then $$\mathbb{P}\big\{\omega\in\Omega:\u(t)=\widetilde{\u}(t),\ \text{ for all }\ t\in[0,T]\big\}=1.$$ 
	\end{definition}

	\begin{theorem}[Theorem 3.7, \cite{MTM1}]\label{exis2}
		Let $\x\in\H$ and $\f\in\V'$ be given.  For $r\in[1,3]$, there exists a \emph{pathwise unique strong solution}
		$\u(\cdot)$ to the system \eqref{2.7} such that \begin{align*}\u&\in\mathrm{L}^4(\Omega;\mathrm{L}^{\infty}(0,T;\H))\cap\mathrm{L}^2(\Omega;\mathrm{L}^2(0,T;\V))\cap\mathrm{L}^{r+1}(\Omega;\mathrm{L}^{r+1}(0,T;\widetilde{\L}^{r+1})),\end{align*} with a modification having  continuous paths in $\H$ and $\u\in\C([0,T];\H)\cap\mathrm{L}^2(0,T;\V)$, $\mathbb{P}$-a.s.
	\end{theorem}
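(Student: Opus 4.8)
The plan is to construct the solution by the Faedo--Galerkin approximation scheme and to identify the limits of the nonlinear terms through a stochastic version of the Minty--Browder technique, exploiting the local monotonicity of $\G(\cdot)=\mu\A\cdot+\B(\cdot)+\beta\mathcal{C}(\cdot)$ recorded in Lemma \ref{thm2.2}; a virtue of this route is that it produces a probabilistically strong (pathwise) solution directly, without passing through martingale solutions. Note that $\G$ being Hilbert--Schmidt (a consequence of \eqref{1p5}) makes the stochastic convolution $\int_0^\cdot\G\,\d\W(s)$ a well-defined $\H$-valued continuous process. For the approximation, let $\H_n=\mathrm{span}\{e_1,\dots,e_n\}$ and let $P_n:\H\to\H_n$ be the orthogonal projection, and consider on $\H_n$ the It\^o SDE
\begin{align*}
\d\u_n+\big\{\mu\A\u_n+P_n\B(\u_n)+\beta P_n\mathcal{C}(\u_n)\big\}\d t=P_n\f\,\d t+P_n\G\,\d\W(t),\qquad\u_n(0)=P_n\x.
\end{align*}
On $\H_n$ all norms are equivalent and the drift is locally Lipschitz with polynomial growth (cf.\ \eqref{2p9}, \eqref{214}), so a unique local strong solution exists; the a priori bounds below rule out blow-up and make it global.

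For the uniform a priori estimates, apply the finite-dimensional It\^o formula to $\|\u_n(t)\|_\H^2$ and to $\|\u_n(t)\|_\H^4$, and use $b(\u_n,\u_n,\u_n)=0$ from \eqref{2.5}, $\langle\mathcal{C}(\u_n),\u_n\rangle=\|\u_n\|_{\wi\L^{r+1}}^{r+1}\ge0$, Young's inequality to absorb $\langle\f,\u_n\rangle$ into $\mu\|\u_n\|_\V^2$, the Burkholder--Davis--Gundy inequality for the stochastic integral, and Gronwall's lemma, to obtain
\begin{align*}
\sup_n\E\Big[\sup_{t\in[0,T]}\|\u_n(t)\|_\H^4\Big]+\sup_n\E\Big[\Big(\int_0^T\|\u_n(t)\|_\V^2\,\d t\Big)^2\Big]+\sup_n\E\Big[\int_0^T\|\u_n(t)\|_{\wi\L^{r+1}}^{r+1}\,\d t\Big]<\infty.
\end{align*}
Combined with \eqref{2.9a} and \eqref{214}, these also bound $\B(\u_n)$ in $\mathrm{L}^2(\Omega\times(0,T);\V')$ and $\mathcal{C}(\u_n)$ in $\mathrm{L}^{\frac{r+1}{r}}(\Omega\times(0,T);\wi\L^{\frac{r+1}{r}})$.

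The main obstacle is the passage to the limit. By Banach--Alaoglu, along a subsequence $\u_n\rightharpoonup\u$ weak-$*$ in $\mathrm{L}^4(\Omega;\mathrm{L}^\infty(0,T;\H))$ and weakly in $\mathrm{L}^2(\Omega;\mathrm{L}^2(0,T;\V))\cap\mathrm{L}^{r+1}(\Omega\times(0,T);\wi\L^{r+1})$, while $\B(\u_n)\rightharpoonup B_0$ and $\mathcal{C}(\u_n)\rightharpoonup C_0$ in the spaces above; a linear argument passes the limit through the Galerkin equation, so $\u$ satisfies \eqref{2.7} with $\B(\u)+\beta\mathcal{C}(\u)$ replaced by $B_0+\beta C_0$. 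The difficulty is to show $\mu\A\u+B_0+\beta C_0=\G(\u)$, and since the monotonicity \eqref{fe2} is only \emph{local}, one needs a weighting-and-localization device: for a test process $\v$ set $\mathrm{r}(t):=\exp\!\big(-\tfrac{27}{16\mu^3}\int_0^t\|\v(s)\|_{\wi\L^4}^4\,\d s\big)$, apply It\^o to $\mathrm{r}(t)\|\u_n(t)\|_\H^2$ and to $\mathrm{r}(t)\|\u(t)\|_\H^2$, take expectations, invoke \eqref{fe2}, and let $n\to\infty$ using weak lower semicontinuity of the norms; this yields, for every $\v$ with $\|\v(t)\|_{\wi\L^4}\le N$,
\begin{align*}
\E\int_0^T\mathrm{r}(t)\,\big\langle\mu\A\u(t)+B_0(t)+\beta C_0(t)-\G(\v(t)),\,\u(t)-\v(t)\big\rangle\,\d t\le0
\end{align*}
on the event where $\u$ stays in the $\wi\L^4$-ball of radius $N$. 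Taking $\v=\u-\lambda\w$ with $\lambda\downarrow0$, using the hemicontinuity of $\G(\cdot)$, and then $N\to\infty$, gives $\mu\A\u+B_0+\beta C_0=\G(\u)$, so that $\u$ solves \eqref{2.7}. Finally, the membership of $\u$ in $\mathrm{L}^4(\Omega;\mathrm{L}^\infty(0,T;\H))\cap\mathrm{L}^2(\Omega;\mathrm{L}^2(0,T;\V))\cap\mathrm{L}^{r+1}(\Omega;\mathrm{L}^{r+1}(0,T;\wi\L^{r+1}))$, the existence of a $\C([0,T];\H)$-modification, and the It\^o energy equality (iii) follow from the generalized It\^o formula in the Gelfand triple $\V\subset\H\subset\V'$ supplemented by the $\wi\L^{r+1}$--$\wi\L^{\frac{r+1}{r}}$ duality.

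For pathwise uniqueness, let $\u$ and $\wi\u$ be two strong solutions and $\w:=\u-\wi\u$; applying It\^o's formula to $\|\w(t)\|_\H^2$ cancels the identical noise terms, and \eqref{fe2} with $N$ replaced by $\|\u(s)\|_{\wi\L^4}\vee\|\wi\u(s)\|_{\wi\L^4}$ gives
\begin{align*}
\|\w(t)\|_\H^2\le\frac{27}{16\mu^3}\int_0^t\big(\|\u(s)\|_{\wi\L^4}^4+\|\wi\u(s)\|_{\wi\L^4}^4\big)\,\|\w(s)\|_\H^2\,\d s,\qquad\mathbb{P}\text{-a.s.}
\end{align*}
Since $\u,\wi\u\in\mathrm{L}^2(0,T;\V)\subset\mathrm{L}^4(0,T;\wi\L^4)$ $\mathbb{P}$-a.s.\ (Remark \ref{rem2.1}), localizing by the stopping time at which $\|\u\|_{\wi\L^4}\vee\|\wi\u\|_{\wi\L^4}$ first exceeds $N$, then applying Gronwall's inequality and sending $N\to\infty$, forces $\w\equiv0$; continuity of paths in $\H$ upgrades this to indistinguishability.
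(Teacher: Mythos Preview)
The paper does not supply its own proof of this theorem; it is quoted verbatim from \cite{MTM1} and only the local monotonicity Lemma~\ref{thm2.2} is reproduced here as the key ingredient. Your proposal---Faedo--Galerkin approximation, uniform energy bounds via It\^o's formula and BDG, weak limits by Banach--Alaoglu, identification of the nonlinear limit through the exponentially weighted stochastic Minty--Browder device built on Lemma~\ref{thm2.2}, and pathwise uniqueness by Gronwall with the $\widetilde\L^4$-weight---is exactly the strategy of \cite{MTM1} as summarized in the paper's introduction, so the approaches coincide.

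One remark on presentation: the phrase ``on the event where $\u$ stays in the $\widetilde\L^4$-ball of radius $N$'' is misleading. In the Minty--Browder step the ball constraint in Lemma~\ref{thm2.2} is on the \emph{test process} $\v$, not on $\u$, and the exponential weight $\mathrm{r}(t)=\exp\!\big(-\tfrac{27}{16\mu^3}\int_0^t\|\v(s)\|_{\widetilde\L^4}^4\,\d s\big)$ already absorbs the defect term for arbitrary $\v\in\mathrm{L}^4(\Omega\times(0,T);\widetilde\L^4)$; no pathwise localization of $\u$ is needed at this stage. The subsequent choice $\v=\u-\lambda\w$ is admissible because $\u\in\mathrm{L}^4(\Omega\times(0,T);\widetilde\L^4)$ by the a~priori bounds and Remark~\ref{rem2.1}, and hemicontinuity plus $\lambda\downarrow0$ then closes the argument without a separate ``$N\to\infty$'' step.
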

	\section{Existence and uniqueness of invariant measure }\label{sec3}\setcounter{equation}{0}
	In this section, we prove the existence and uniqueness of invariant measures for the system \eqref{2.7}, by showing that the Markov semigroup associated with the 2D SCBF equations is irreducibile and strong Feller. The ergodicity results for the 2D Navier-Stokes equations is available in \cite{ADe,BF,FFBM} etc and we follow the work \cite{BF} for obtaining the  uniqueness of invariant measures for the 2D SCBF equations.  Even though the ergodic results for the 2D SCBF equations are available in \cite{MTM1}, in order to discuss about the LDP for occupation measures of the system \eqref{2.7} in the next section,  it is necessary to prove that the Markov semigroup associated with the 2D SCBF equations \eqref{2.7} is irreducibile and strong Feller. We emphasize here that the irreducibility and strong Feller property of the transition semigroup associated with the solutions of the 2D SCBF equations  \eqref{2.7} can be obtained under the weaker assumptions on the noise given in \eqref{1p6} and \eqref{1p7}, and it will be discussed in a separate work (see Remark \ref{rem3.7} below). 
	
	\subsection{Preliminaries}
	Let $\mathrm{E}$ be a Borel subset of $\H$. The transition probability measures $\P(t,\x,\cdot)$ are defined as $\P(t,\x,B)=\mathbb{P}\{\u(t,\x)\in B\},$ for all $ t>0, \; \x \in \mathrm{E}$ and all Borelian sets $B \in \mathcal{B}(\mathrm{E})$,  where $\u(t,\x)$ denotes the solution of the 2D SCBF equations \eqref{2.7} with the initial condition $\x$; such a process $\u(t,\x)$ is shown to exists and is Markovian (for $\x\in\H$)  in \cite{MTM}. Let $\{\P_{t}\}_{t\geq 0}$ be the Markov semigroup in the space $\C_{b}(\mathrm{E})$ associated with the strong solution of the 2D SCBF equations \eqref{2.7}, defined as 
	\begin{align*}
	(\P_{t}\varphi )(\x) = \mathbb{E}[\varphi(\u(t,\x))], \ \text{ for all }\  \varphi \in \C_{b}(\mathrm{E}).
	\end{align*}
	A Markov semigroup is \emph{irreducible} if $\P(t,\x,B) >0,$ for all $t>0, \ \x\in \mathrm{E}$ and non empty open set $B \subset \mathrm{E}$; it is \emph{Feller} if $\P_{t}:\C_{b}(\mathrm{E}) \to \C_{b}(\mathrm{E})$ for arbitrary $t>0 $ and \emph{strong Feller} if $\P_{t}$ can be extended to the space $\mathcal{B}_{b}(\mathrm{E}) \text{ for any } t>0$, that is, $\P_{t}\varphi$ is continuous and bounded in $\mathrm{E}$ for all Borel bounded function $\varphi$ in $\mathrm{E}$.
	These two important properties are essentially related to invariant measures. Let us first consider the dual semigroup $\{\P^{*}_{t}\}_{t\geq0}$ in the space $\M_1(\mathrm{E})$, which is defined as 
	\begin{align*}
	\int_{\mathrm{E}} \varphi \d(\P^{*}_{t}\nu) = \int_{\mathrm{E}}\P_{t}\varphi\d\nu,
	\end{align*}
	for all $\varphi \in \C_{b}(\mathrm{E}) \text{ and } \nu \in \M_1(\mathrm{E})$.
	A measure $\nu \in \M_1(\mathrm{E})$ is called \emph{invariant} if $\P^{*}_{t}\nu = \nu $ for all $t\geq 0 $.
	Existence of invariant measures for the 2D SCBF equations is already shown in  \cite{MTM1}. Using the exponential stability of strong solutions, the author in \cite{MTM} proved the uniqueness of  invariant measure (for multiplicative Gaussian noise) also. In this paper, we use Doob's Theorem to show the uniqueness of invariant measures. By Doob's Theorem, it is sufficient that the measures $\P(t,\x,\cdot)$ and $\P(s,\y,\cdot)$ are absolutely continuous for arbitrary $\x,\y\in\H$ and $t,s>0$. The strong Feller and irreducibility properties imply this condition. Thus, we have the following result (cf. \cite{GDJZ,BM}). 
	\begin{theorem}\label{thm3.1}
		Assume that the Markov semigroup $\P_{t}$ is irreducible and strong Feller. Then there exists at most one invariant measure $\nu$, which is ergodic and equivalent to each transition probability $\P(t,\x,\cdot)$.
	\end{theorem}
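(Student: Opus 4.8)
The plan is to follow the classical Khas'minskii--Doob argument (as in \cite{GDJZ,BM}), which deduces uniqueness, ergodicity, and equivalence from a single \emph{regularity} property: that all the transition laws $\P(t,\x,\cdot)$, $t>0$, $\x\in\mathrm{E}$, are mutually equivalent.

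First I would prove this regularity. Fix $t>0$ and a Borel set $\Gamma\subset\mathrm{E}$ with $\P(t,\x_0,\Gamma)=0$ for some $\x_0$. Splitting $t=\tfrac{t}{2}+\tfrac{t}{2}$ and using the Chapman--Kolmogorov identity, $0=\P(t,\x_0,\Gamma)=\int_{\mathrm{E}}(\P_{t/2}\mathbf{1}_{\Gamma})(\z)\,\P(\tfrac{t}{2},\x_0,\d\z)$, so the nonnegative function $\z\mapsto(\P_{t/2}\mathbf{1}_{\Gamma})(\z)$ vanishes $\P(\tfrac{t}{2},\x_0,\cdot)$-a.s.; by the strong Feller property it is continuous on $\mathrm{E}$, so $\{(\P_{t/2}\mathbf{1}_{\Gamma})>0\}$ is open, and by irreducibility of $\P_{t/2}$ it must be empty (otherwise it would be charged by $\P(\tfrac{t}{2},\x_0,\cdot)$). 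Hence $\P_{t/2}\mathbf{1}_{\Gamma}\equiv0$ on $\mathrm{E}$, and Chapman--Kolmogorov again gives $\P(t,\y,\Gamma)=0$ for every $\y\in\mathrm{E}$; exchanging the roles of $\x_0$ and $\y$ yields $\P(t,\x,\cdot)\sim\P(t,\y,\cdot)$ for all $\x,\y\in\mathrm{E}$.

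Next I would deduce that every invariant measure $\mu$ is equivalent to the reference law $\lambda:=\P(1,\x_0,\cdot)$ — hence all invariant measures are mutually equivalent — by reading off from $\mu=\P_1^{*}\mu=\int_{\mathrm{E}}\P(1,\x,\cdot)\,\mu(\d\x)$ that a $\lambda$-null set is $\P(1,\x,\cdot)$-null for all $\x$ and therefore $\mu$-null, and conversely that a $\mu$-null set is $\P(1,\x,\cdot)$-null for some $\x$ and therefore $\lambda$-null. The same type of reasoning shows that $\mu$ is ergodic: if $\Gamma$ is $\P_t$-invariant (i.e.\ $\P_t\mathbf{1}_{\Gamma}=\mathbf{1}_{\Gamma}$ $\mu$-a.e.\ for all $t$) with $\mu(\Gamma)>0$, then $\P(1,\x,\Gamma^{c})=0$ for some $\x$, hence (by the first step) $\P(1,\y,\Gamma)=1$ for all $\y$ and $\mu(\Gamma)=\int_{\mathrm{E}}\P(1,\y,\Gamma)\,\mu(\d\y)=1$. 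Finally, if $\mu,\nu$ are both invariant they are both ergodic and mutually equivalent by the above, while distinct ergodic measures are mutually singular; therefore $\mu=\nu$, so there is at most one invariant measure, it is ergodic, and it is equivalent to every $\P(t,\x,\cdot)$.

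I expect the regularity step to be the only real obstacle: the strong Feller property must be used at the \emph{intermediate} time $t/2$ (not at time $t$) so that the open-set-plus-irreducibility argument applies to an honestly continuous representative of $\P_{t/2}\mathbf{1}_{\Gamma}$; the remaining steps are routine measure theory, and for the precise identification of ergodic invariant measures with the extreme points of the convex set of invariant measures I would refer to \cite{GDJZ}.
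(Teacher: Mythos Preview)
The paper does not actually prove Theorem~\ref{thm3.1}; it merely states it with a citation to \cite{GDJZ,BM} (Doob's theorem), so there is no in-paper argument to compare against. Your proposal is a correct outline of the classical Khas'minskii--Doob argument underlying those references---mutual equivalence of the laws $\P(t,\x,\cdot)$ via the strong Feller/irreducibility splitting at time $t/2$, then equivalence of any invariant measure with a fixed transition law, ergodicity, and uniqueness from mutual singularity of distinct ergodic measures---and hence matches the intended approach exactly.
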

	One can obtain the strong mixing property also, that is, $$\lim\limits_{t\to\infty}\|\P_t^*\varrho-\mu\|_{\mathrm{var}}=0,\ \text{ for all }\ \varrho\in\M_1(\mathrm{E}),$$  where $\|\cdot\|_{\mathrm{var}}$ represents the total variation of a measure. The total variation of a measure $\nu$ is defined as \begin{align}\label{7.1}\|\nu\|_{\mathrm{var}}:=\sup_{\psi\in\mathcal{B}_b(\H),\|\psi\|_{\sup}\leq 1}\left|\int \psi(x)\d\nu(x)\right|. \end{align}
	\emph{Ergodicity} means that as the averaging interval becomes infinitely large the time averages converge to the corresponding ensemble averages, that is, $\nu$ is the equilibrium measure over the phase space $\mathrm{E}$ such that 
	\begin{align*}
	\lim_{T\to \infty}\frac{1}{T}\int_{0}^{T}\varphi(\u(t,\x))\d t= \int_{\mathrm{E}}\varphi \d \nu, \quad \mathbb{P} \text{-a.s.,}
	\end{align*}
	for all $\x \in \mathrm{E}$ and Borel measurable functions $\varphi:\mathrm{E}\to\R$ such that $\int_{\mathrm{E}}|\varphi|\d \nu <\infty$.

	\subsection{General procedure and main result} Let us now establish the  irreducibility and strong Feller properties of the transition semigroup associated with the solutions of the system \eqref{2.7}  in the space $\mathrm{E}=\D(\A^{\alpha}),$ for arbitrary $\alpha \in \big[\frac{1}{4},\frac{1}{2}\big)$ for $r=1,2$ and $\alpha \in \big[\frac{1}{3},\frac{1}{2}\big)$ for $r=3$.
	Let us first consider the Ornstein-Uhlenbeck process $\z$, which is the solution of \begin{equation}\label{3.1}\left\{\begin{aligned}
	\d \z(t) + \mu\A \z(t)\d t &=\G\d \W(t), \\ \quad \quad\quad\quad\quad
	\z(0)& = \mathbf{0}.
	\end{aligned}
	\right.
	\end{equation}
	Under proper assumptions on the noise, the above system possesses a unique solution $\z(\cdot)$, which is progressively measurable  with  $\mathbb{P}$-a.s. continuous  trajectories taking values in some appropriate spaces.
	Let us take $ \v= \u-\z$; subtracting \eqref{3.1} from SCBF equations \eqref{2.7}, we obtain 
	\begin{equation}\label{3.2}
	\left\{
	\begin{aligned}
	\frac{\d \v(t)}{\d t}+\mu \A \v(t)+ \B(\v(t)+\z(t),\v(t)+\z(t)) + \beta \mathcal{C}(\v(t)+\z(t)) &=\f,  \\
	\hspace{10cm}\v(0) &=\x,
	\end{aligned}
	\right.
	\end{equation}
	which is, for $\mathbb{P}$-a.s. $ \omega \in \Omega$, a deterministic system.
	First, we recall a result on the existence of unique weak solution (in the deterministic sense) to the system \eqref{3.2} (cf. \cite{MTM1} also). 	The assumption \eqref{1p5} on $\G$ gives $\z \in \C_{\mathbf{0}}([0,T];\D(\A^{\frac{1}{4}})) \; \mathbb{P}$-a.s., (see \cite{BF}, for more details), where $\C_{\mathbf{0}}([0,T];\D(\A^{\frac{1}{4}}))$ is the space of all continuous  functions $\z:[0,T]\to\D(\A^{\frac{1}{4}})$ such that $\z(0)=\mathbf{0}$. Thus, it  can be easily  verified that $ \v \in \C([0,T];\H)\cap \mathrm{L}^{2}(0,T;\V),\ \mathbb{P}$-a.s., and we obtain the following result: 
	\begin{proposition}\label{prop3.2}
		If $\mathrm{Im}(\G) \subseteq \D(\A^{\frac{1}{4}+\e}),$ for some $\e >0$, then for arbitrary $\x \in \H, \; \f \in \D(\A^{-\frac{1}{2}})$, there exists a unique solution of \eqref{2.7} such that, for $\mathbb{P}$-a.s $\omega \in \Omega,$ 
		$$ \u(\cdot,\omega) \in \C([0,T];\H)\cap \mathrm{L}^{2}(0,T;\V)$$ 
		and $$ \u(\cdot, \omega)- \z(\cdot,\omega) \in \mathrm{L}^{2}(0,T;\V).$$
		This is a Markov process satisfying the Feller property in $\H$. Furthermore, if $\f \in \D(\A^{-\frac{1}{2}+\gamma})$ for some $\gamma \in (0,1/2)$, then there exists an invariant measure associated to the Markov semigroup.
	\end{proposition}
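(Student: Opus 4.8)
The plan is to follow the classical splitting used for the 2D Navier--Stokes equations in \cite{BF}. Write $\u=\v+\z$, where $\z$ solves the linear system \eqref{3.1} and $\v:=\u-\z$ solves, for $\mathbb P$-a.e.\ fixed $\omega$, the random but deterministic-type system \eqref{3.2}; the key point is that $\v\mapsto\mu\A\v+\B(\v+\z,\v+\z)+\beta\mathcal C(\v+\z)$ carries the same local monotonicity as the operator $\G$ in Lemma \ref{thm2.2}. Under $\mathrm{Im}(\G)\subseteq\D(\A^{\frac14+\e})$, the stochastic convolution has, by the factorisation method (see \cite{DaZ,BF}), $\mathbb P$-a.s.\ trajectories in $\C_{\mathbf 0}([0,T];\D(\A^{\frac14}))$; hence, by the Sobolev embedding \eqref{26} with $\alpha=\frac14$, $p=4$ and $r\le 3$, one has $\z\in\C([0,T];\wi\L^4)\subseteq\C([0,T];\wi\L^{r+1})$ $\mathbb P$-a.s., and $\z(t)$ is Gaussian with moments in $\D(\A^{\frac14})$ bounded uniformly in $t\ge0$. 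Consequently only the $\wi\L^{4}$-regularity of $\z$ will be used in the estimates below.

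First I would establish existence and uniqueness for \eqref{3.2}. Testing with $\v$ and expanding $\B(\v+\z,\v+\z)=\B(\v,\v)+\B(\v,\z)+\B(\z,\v)+\B(\z,\z)$, the terms that survive \eqref{2.5} are $-b(\v,\v,\z)-b(\z,\v,\z)$, and by Ladyzhenskaya's inequality \eqref{2.9a} and Young's inequality these are bounded by $\tfrac{\mu}{2}\|\v\|_{\V}^2+C\|\z\|_{\wi\L^4}^4(1+\|\v\|_{\H}^2)$; the damping term, by \eqref{2.23} and Young's inequality, produces $\tfrac{\beta}{2}\|\v+\z\|_{\wi\L^{r+1}}^{r+1}$ up to a term $C\|\z\|_{\wi\L^{r+1}}^{r+1}$; and $\langle\f,\v\rangle\le\tfrac{\mu}{4}\|\v\|_{\V}^2+\tfrac1{\mu}\|\f\|_{\V'}^2$. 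Thus the Faedo--Galerkin scheme and the Minty--Browder argument of \cite{MTM1} (cf.\ Theorem \ref{exis2}) carry over and give a unique $\v\in\C([0,T];\H)\cap\mathrm L^2(0,T;\V)\cap\mathrm L^{r+1}(0,T;\wi\L^{r+1})$ $\mathbb P$-a.s.; setting $\u:=\v+\z$ yields $\u\in\C([0,T];\H)$ and $\u-\z=\v\in\mathrm L^2(0,T;\V)$, and pathwise uniqueness for \eqref{2.7} follows from that for \eqref{3.2}. By Remark \ref{rem2.1}, $\v+\z\in\mathrm L^4(0,T;\wi\L^4)$ $\mathbb P$-a.s.

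Next, the Markov property follows from pathwise uniqueness together with the independence of the increments of $\W$ (cf.\ \cite{GDJZ,DaZ}), and the Feller property from continuous dependence on the initial datum: for $\x_n\to\x$ in $\H$, the difference $\w_n:=\u(\cdot,\x_n)-\u(\cdot,\x)$ solves $\tfrac{\d\w_n}{\d t}+\G(\u(\cdot,\x_n))-\G(\u(\cdot,\x))=\mathbf 0$ with $\G(\cdot)=\mu\A\cdot+\B(\cdot)+\beta\mathcal C(\cdot)$, so Lemma \ref{thm2.2} (with $\mathbb B_N$-radius $N(s)=\|\v(s)+\z(s)\|_{\wi\L^4}$) and Gronwall's inequality give, $\mathbb P$-a.s.,
\begin{align*}
\|\w_n(t)\|_{\H}^2\le\|\x_n-\x\|_{\H}^2\exp\left(\frac{27}{16\mu^{3}}\int_0^t\|\v(s)+\z(s)\|_{\wi\L^4}^4\d s\right)\longrightarrow 0\ \ \text{ as }\ \ n\to\infty,
\end{align*}
the exponent being finite by the previous step. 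Dominated convergence then gives $(\P_t\varphi)(\x_n)\to(\P_t\varphi)(\x)$ for every $\varphi\in\C_b(\H)$, i.e.\ $\P_t$ is Feller.

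Finally, for the invariant measure I would invoke the Krylov--Bogoliubov theorem, so it suffices to prove that the averaged measures $\tfrac1T\int_0^T\P(t,\x,\cdot)\d t$ are tight in $\H$. The regular part $\v$ satisfies, from the energy identity above and the Poincar\'e inequality \eqref{2.3}, a uniform-in-time estimate for $\tfrac1t\int_0^t\E\|\v(s)\|_{\V}^2\d s$; the hypothesis $\f\in\D(\A^{-\frac12+\gamma})$ is imposed precisely to lift this to a bound on $\tfrac1t\int_0^t\E\|\A^{\gamma}\v(s)\|_{\H}^2\d s$ (test \eqref{3.2} against $\A^{2\gamma}\v$, control the nonlinear terms via Lemma \ref{lem2.2} and \eqref{2p14}, \eqref{2p22} using only the $\wi\L^4$-regularity of $\z$, and estimate $\langle\f,\A^{2\gamma}\v\rangle\le\tfrac{\mu}{2}\|\A^{\frac12+\gamma}\v\|_{\H}^2+\tfrac1{2\mu}\|\A^{-\frac12+\gamma}\f\|_{\H}^2$). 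Since $\z(t)$ has uniformly bounded $\D(\A^{\frac14})$-moments and $\D(\A^{\frac14})$ as well as $\V$ (or $\D(\A^{\gamma})$) embed compactly into $\H$, the laws of $\u(t,\x)=\v(t)+\z(t)$ — sums of a time-averaged tight family and a tight family — form a tight family, whence $\{\tfrac1T\int_0^T\P(t,\x,\cdot)\d t\}$ is tight in $\H$ and any weak limit point is invariant. I expect this last step to be the main obstacle: reconciling the merely $\D(\A^{\frac14})$-regularity of $\z$ with the higher-order a priori bounds needed for compactness (and, for $r<3$, absorbing the convective Gronwall term rather than the Forchheimer term doing it automatically) is the delicate part, which is why the noise is taken non-degenerate with $\mathrm{Im}(\G)\subseteq\D(\A^{\frac14+\e})$ and the forcing slightly more regular than $\V'$.
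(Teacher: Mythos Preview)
The paper does not actually give a proof of Proposition~\ref{prop3.2}: it is presented as a recall, with the two sentences preceding the statement pointing to \cite{MTM1} and \cite{BF} and noting only that the hypothesis on $\G$ yields $\z\in\C_{\mathbf 0}([0,T];\D(\A^{\frac14}))$ $\mathbb P$-a.s., from which ``it can be easily verified that $\v\in\C([0,T];\H)\cap\mathrm L^2(0,T;\V)$.'' Your proposal therefore supplies exactly the details the paper defers to those references, and the route you take---the splitting $\u=\v+\z$, energy estimates on the random deterministic system~\eqref{3.2}, Faedo--Galerkin plus the local monotonicity/Minty--Browder argument of \cite{MTM1}, the Gronwall bound based on Lemma~\ref{thm2.2} for continuous dependence (hence Feller), and Krylov--Bogoliubov for the invariant measure---is precisely the intended one.

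Two small remarks. First, you conclude $\u\in\C([0,T];\H)$ and $\v\in\mathrm L^2(0,T;\V)$ but do not explicitly close the claim $\u\in\mathrm L^2(0,T;\V)$; this needs $\z\in\mathrm L^2(0,T;\V)$ $\mathbb P$-a.s., which does not follow from $\z\in\C([0,T];\D(\A^{\frac14}))$ alone but is a standard maximal-regularity fact for the stochastic convolution under $\mathrm{Im}(\G)\subseteq\D(\A^{\frac14+\e})$ (and automatic under the stronger hypothesis~\eqref{1p5} the paper ultimately works with). Second, in the invariant-measure step your statement that ``only the $\wi\L^4$-regularity of $\z$'' suffices to test against $\A^{2\gamma}\v$ is slightly optimistic: Lemma~\ref{lem2.2} with $\delta=\tfrac12-\gamma$ forces $\theta=\rho=\tfrac14+\tfrac{\gamma}{2}$, so one actually needs $\z\in\D(\A^{\frac14+\frac{\gamma}{2}})$, which is available for $\gamma$ small (any $\gamma<2\e$) from the factorisation argument. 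You already flag this as the delicate point, so this is bookkeeping rather than a genuine gap.
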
 
	Let us now discuss about the irreducibility and strong Feller properties of the transition semigroup associated with the 2D SCBF equations \eqref{2.7}. 
	\vskip 0.2 cm
	\noindent\textbf{Step I.} \emph{Irreducibility:} 
	Let us first define the mapping $$ \Phi:\z \mapsto \u = \v+\z.$$ Following \cite{BF}, we provide an outline of the proof of irreducibility to point out the importance of the map $\Phi$. In order to prove irreducibility, it is enough to show that 
	\begin{align*}
	\mathbb{P}\{\u(\cdot,\omega)\in \mathcal {U}_{\rho}\} \geq \mathbb{P}\{\z(\cdot,\omega)\in \mathcal {Z}_{\delta_{\rho}}\}>0,
	\end{align*}
	where $\mathcal {U}_{\rho}, \;\mathcal {Z}_{\delta_{\rho}}$  are open sets in suitable spaces, which will be specified later ($\delta_{\rho}$ means that, for given $\rho, \; \mathcal {Z}_{\delta_{\rho}}$, has to be chosen as a function of $\mathcal {U}_{\rho}$). In order to obtain the final inequality, the law $\mathcal{L}(\z)$ has to be a full measure on a suitable space. The first inequality holds true, since given $ \bar{\u}$, there exists a suitable $\bar{\z}$ such that if $\z$ in the ball  $\mathcal {Z}_{\delta_{\rho}}(\bar{\z})$ of center $\bar{\z}$ and radius $\delta_{\rho}$, then $\u$ belongs to a ball $\mathcal {U}_{\rho}(\bar{\u})$ of center $\bar{\u}$ and radius $\rho$ in suitable topologies.
	Therefore we will examine the validity of this inequality in the following three steps:
	\begin{itemize}
		\item[(i)] $\Phi:\X_{\z} \to \X_{\u}$ is well defined, that is, $\u = \Phi(\z)$ has the required regularity,
		\item[(ii)] given $\bar{\u}\in \X_{\u}$, there exists $\bar{\z}\in \X_{\z}$ such that $\bar{\u}= \Phi(\bar{\z})$,
		\item[(iii)] $\Phi$ is continuous in the assigned topologies, that is, $\|\u-\bar{\u}\|_{\X_{\u}} \leq C \|\z-\bar{\z}\|_{\X_{\z}}$.
	\end{itemize}
	\vskip 0.2 cm
	\noindent\textbf{Step II.} \emph{Strong Feller:} 
	Regarding the strong Feller property of the 2D SCBF equations \eqref{2.7}, we have to consider intermediate auxiliary equations. By the Mean Value Theorem, we obtain that the Markov semigroup is Lipschitz Feller, if we are able to estimate the derivative of the semigroup $\P_{t}\varphi$. In order to estimate this derivative of the semigroup $\P_{t}\varphi,$ we use the Bismut-Elworthy-Li formula (\cite{BEL}), which holds for the  finite dimensional Galerkin approximated system associated with the 2D SCBF equations \eqref{2.7}. 
	
	\begin{remark}
		One can also use the following hypothesis also on $\G$ to obtain the ergodicity results for the 2D SCBF system \eqref{2.7} (cf. \cite{BF} for 2D NSE). Given $\alpha \in \big[\frac{1}{4},\frac{1}{2})$ for $r=1,2$ and $\alpha \in \big[\frac{1}{3},\frac{1}{2})$ for $r=3$, let 
		$\G:\H\to\H$ be a linear bounded injective  operator with $\mathrm{Im}(\G)$ dense in $\D(\A^{\frac{1}{4}+\frac{\alpha}{2}})$ and such that \begin{align}\label{33}\D(\A^{2\alpha}) \subseteq \mathrm{Im}(\G) \subseteq \D(\A^{\frac{1}{4}+\frac{\alpha}{2}+\epsilon'}), \ \text{ for some } \ \epsilon'>0.\end{align} But the second embedding in \eqref{33} is clearly implied by the second embedding in \eqref{1p5}. 
	\end{remark}
	\begin{theorem}\label{main}
		Under the assumption \eqref{33}, for all $ \x \in \D(\A^{\alpha}), \; \f \in \D(\A^{\alpha-\frac{1}{2}})$, there exists a unique $\u(\cdot)$ of the solution of the system \eqref{2.7} such that 
		$$ \u(\cdot,\omega) \in \C([0,T];\D(\A^{\alpha}))\cap \mathrm{L}^{2}(0,T;\D(\A^{\frac{1}{4}+\frac{\alpha}{2}}))$$ 
		and $$ \u(\cdot, \omega)- \z(\cdot,\omega) \in \mathrm{L}^{2}(0,T;\D(\A^{\alpha+\frac{1}{2}})),$$
		for $\mathbb{P}$-a.a. $\omega \in \Omega$.
		
		For the Markov semigroup, the irreducibility and strong Feller properties hold in $\D(\A^{\alpha})$, and there exists a unique invariant measure for the system \eqref{2.7} concentrated on $\D(\A^{\alpha})$. 
	\end{theorem}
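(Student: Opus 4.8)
The plan is to establish the three assertions of Theorem \ref{main} in turn, following the strategy of \cite{BF} for the 2D Navier--Stokes equations, the one genuinely new ingredient being the treatment of the Forchheimer term $\mathcal{C}$. \textbf{Step 1: higher spatial regularity.} By Theorem \ref{exis2} and Proposition \ref{prop3.2} we already have the pathwise unique solution $\u$ of \eqref{2.7} with $\u\in\C([0,T];\H)\cap\mathrm{L}^2(0,T;\V)$ and $\v:=\u-\z\in\C([0,T];\H)\cap\mathrm{L}^2(0,T;\V)$, $\mathbb{P}$-a.s., hence also $\v\in\mathrm{L}^4(0,T;\wi\L^4)$, $\mathbb{P}$-a.s., by Remark \ref{rem2.1}. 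The second inclusion in \eqref{33} forces the Ornstein--Uhlenbeck process of \eqref{3.1} to satisfy $\z\in\C_{\mathbf{0}}([0,T];\D(\A^{\frac{1}{4}+\frac{\alpha}{2}+\epsilon'}))$, $\mathbb{P}$-a.s.; this extra smoothness of the noise is used repeatedly below. It therefore suffices to upgrade the regularity of $\v$, which for $\mathbb{P}$-a.e.\ $\omega$ solves the deterministic equation \eqref{3.2}. Working on finite-dimensional Galerkin truncations and testing \eqref{3.2} with $\A^{2\alpha}\v$ gives
\begin{align*}
\frac{1}{2}\frac{\d}{\d t}\|\A^{\alpha}\v\|_{\H}^2+\mu\|\A^{\alpha+\frac{1}{2}}\v\|_{\H}^2=-\langle\B(\v+\z),\A^{2\alpha}\v\rangle-\beta\langle\mathcal{C}(\v+\z),\A^{2\alpha}\v\rangle+\langle\f,\A^{2\alpha}\v\rangle.
\end{align*}
For the convective term we write $\langle\B(\cdot),\A^{2\alpha}\v\rangle=\langle\A^{\alpha-\frac{1}{2}}\B(\cdot),\A^{\alpha+\frac{1}{2}}\v\rangle$ and apply Lemma \ref{lem2.2} with $\delta=\frac{1}{2}-\alpha\in(0,\frac{1}{4}]$ --- admissible precisely because $\frac{1}{4}\le\alpha<\frac{1}{2}$ --- splitting $\B(\v+\z)$ into $\B(\v,\v)$, the mixed terms (in which one derivative is placed on the smoother factor $\z$) and $\B(\z,\z)$, which yields $|\langle\B(\v,\v),\A^{2\alpha}\v\rangle|\le\frac{\mu}{8}\|\A^{\alpha+\frac{1}{2}}\v\|_{\H}^2+C\|\v\|_{\V}^2\|\A^{\alpha}\v\|_{\H}^2$ together with bounded remainders involving $\z$. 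For the Forchheimer term ($r=2,3$) one uses H\"older's inequality together with the Sobolev embeddings \eqref{26} and interpolation to obtain $|\langle\mathcal{C}(\v),\A^{2\alpha}\v\rangle|\le\frac{\mu}{8}\|\A^{\alpha+\frac{1}{2}}\v\|_{\H}^2+C\|\v\|_{\wi\L^4}^{2(r-1)}\|\A^{\alpha}\v\|_{\H}^2$, the standing hypothesis $\alpha>\frac{r-1}{2r}$ (i.e.\ $\alpha>\frac{1}{3}$ when $r=3$) being exactly what guarantees the embedding $\D(\A^{\alpha})\subset\wi\L^{2r}$ that makes $\mathcal{C}(\cdot)$ an $\H$-valued term and lets the exponents in the interpolation balance; \eqref{2p19} and \eqref{2p22} serve as auxiliary tools here. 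Collecting terms we reach a differential inequality
\begin{align*}
\frac{\d}{\d t}\|\A^{\alpha}\v(t)\|_{\H}^2+\frac{\mu}{2}\|\A^{\alpha+\frac{1}{2}}\v(t)\|_{\H}^2\le h(t)\|\A^{\alpha}\v(t)\|_{\H}^2+k(t),
\end{align*}
where $h(t)=C(1+\|\v(t)\|_{\V}^2+\|\v(t)\|_{\wi\L^4}^{2(r-1)})$ is $\mathbb{P}$-a.s.\ integrable on $[0,T]$ by the a priori bounds above and $k$ gathers the bounded contributions of $\z$ and $\f$. Since $\v(0)=\x\in\D(\A^{\alpha})$ and $\f\in\D(\A^{\alpha-\frac{1}{2}})$, Gronwall's lemma yields a uniform bound on $\|\A^{\alpha}\v(t)\|_{\H}$ and, after integration, $\v\in\mathrm{L}^2(0,T;\D(\A^{\alpha+\frac{1}{2}}))$; combined with $\z\in\C([0,T];\D(\A^{\frac{1}{4}+\frac{\alpha}{2}+\epsilon'}))$ and $\frac{1}{4}+\frac{\alpha}{2}>\alpha$ this gives the asserted $\u\in\C([0,T];\D(\A^{\alpha}))\cap\mathrm{L}^2(0,T;\D(\A^{\frac{1}{4}+\frac{\alpha}{2}}))$. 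Passing to the limit in the Galerkin scheme and invoking the pathwise uniqueness of Theorem \ref{exis2} identifies the limit with $\u$.

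\textbf{Step 2: irreducibility.} We follow the three-step scheme (i)--(iii) described above for the map $\Phi:\z\mapsto\u=\v+\z$, acting between the path spaces $\X_{\z}=\C_{\mathbf{0}}([0,T];\D(\A^{\frac{1}{4}+\frac{\alpha}{2}+\epsilon'}))$ and $\X_{\u}$ built on $\C([0,T];\D(\A^{\alpha}))\cap\mathrm{L}^2(0,T;\D(\A^{\frac{1}{4}+\frac{\alpha}{2}}))$. Well-posedness of $\Phi$ is Step 1. For the reachability step (ii), given a smooth target path $\bar{\u}:[0,T]\to\D(\A^{\alpha})$ with $\bar{\u}(0)=\x$ whose endpoint lies in the prescribed open set, one defines $\bar{\z}$ as the solution of $\d\bar{\z}+\mu\A\bar{\z}\,\d t=\big[\frac{\d\bar{\u}}{\d t}+\mu\A\bar{\u}+\B(\bar{\u})+\beta\mathcal{C}(\bar{\u})-\f\big]\,\d t$ with $\bar{\z}(0)=\mathbf{0}$; choosing $\bar{\u}$ smooth enough (valued in the domain of a high power of $\A$) puts $\bar{\z}$ into $\X_{\z}$, and by construction $\bar{\v}:=\bar{\u}-\bar{\z}$ solves \eqref{3.2}, i.e.\ $\Phi(\bar{\z})=\bar{\u}$. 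Continuity of $\Phi$ in the assigned topologies (iii) follows by subtracting two copies of \eqref{3.2}, testing the difference with $\A^{2\alpha}(\v_1-\v_2)$ and invoking the local Lipschitz bounds \eqref{2p9}, \eqref{214}, \eqref{2.23} together with the estimates of Step 1. Finally, since $\D(\A^{2\alpha})\subseteq\mathrm{Im}(\G)$ (the non-degeneracy built into \eqref{33}), the law of $\z$ is a Gaussian measure of full topological support on $\X_{\z}$, so that every nonempty ball has strictly positive probability; combining this with (i)--(iii) gives $\P(t,\x,B)\ge\mathbb{P}\{\z\in\mathcal{Z}_{\delta_{\rho}}(\bar{\z})\}>0$ for every nonempty open $B\subset\D(\A^{\alpha})$, which is the irreducibility.

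\textbf{Step 3: strong Feller, and conclusion.} To prove the strong Feller property of $\{\P_{t}\}_{t\ge0}$ in $\D(\A^{\alpha})$ we work with the Galerkin-approximated semigroups $\P_{t}^{(n)}$ and apply the Bismut--Elworthy--Li formula \cite{BEL}: for $\varphi\in\C_b(\D(\A^{\alpha}))$ and $\h\in\D(\A^{\alpha})$,
\begin{align*}
\langle(\nabla\P_{t}^{(n)}\varphi)(\x),\h\rangle=\frac{1}{t}\,\mathbb{E}\left[\varphi(\u_n(t,\x))\int_0^t\left(\G_n^{-1}\zeta_n(s),\d\W(s)\right)\right],
\end{align*}
where $\zeta_n$ is the derivative of $\u_n$ in the direction $\h$, the solution of the linearized equation with $\zeta_n(0)=\h$. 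Using the first inclusion in \eqref{33} to bound $\G_n^{-1}$ from $\D(\A^{2\alpha})$ into $\H$, the parabolic smoothing $\zeta_n\in\mathrm{L}^2(0,t;\D(\A^{2\alpha}))$ --- whose singularity as $s\downarrow0$ is integrable because $2\alpha<1$ --- and the estimates of Step 1, one obtains $|(\nabla\P_{t}^{(n)}\varphi)(\x)|\le C(t)\|\varphi\|_{\sup}$ with $C(t)$ independent of $n$ and locally bounded for $t>0$; letting $n\to\infty$ shows that $\P_{t}$ maps $\mathcal{B}_b(\D(\A^{\alpha}))$ into $\C_b(\D(\A^{\alpha}))$. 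Existence of an invariant measure concentrated on $\D(\A^{\alpha})$ follows from a Krylov--Bogoliubov argument carried out in $\D(\A^{\alpha})$, using uniform-in-time versions of the estimates of Step 1 and the compact embedding $\D(\A^{\frac{1}{4}+\frac{\alpha}{2}})\hookrightarrow\D(\A^{\alpha})$ (valid since $\alpha<\frac{1}{2}$). Uniqueness (and ergodicity) then follow from the irreducibility, the strong Feller property and Doob's theorem, Theorem \ref{thm3.1}. The crux of the whole argument is Step 1, namely turning the $\D(\A^{\alpha})$-estimate into a \emph{global} bound rather than merely a local one: the cubic Forchheimer nonlinearity (when $r=3$), as well as the convection term, threatens to produce a super-linear term in $\|\A^{\alpha}\v\|_{\H}^2$, and avoiding this forces the delicate balancing of fractional powers of $\A$ that is responsible for the restrictions $\alpha\in[\frac{1}{4},\frac{1}{2})$ for $r=1,2$ and $\alpha\in[\frac{1}{3},\frac{1}{2})$ for $r=3$. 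It is precisely here that both the extra regularity $\z\in\C([0,T];\D(\A^{\frac{1}{4}+\frac{\alpha}{2}+\epsilon'}))$ coming from \eqref{33} and the a priori bounds $\v\in\mathrm{L}^2(0,T;\V)\cap\mathrm{L}^4(0,T;\wi\L^4)$ from Proposition \ref{prop3.2} and Remark \ref{rem2.1} are essential --- the latter supplying the integrable coefficient in the Gronwall step.
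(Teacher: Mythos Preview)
Your Steps 1 and 2 follow the paper's argument closely and are essentially correct; the a priori estimate on $\|\A^{\alpha}\v\|_{\H}$ via testing \eqref{3.2} with $\A^{2\alpha}\v$, Lemma \ref{lem2.2} with $\delta=\frac{1}{2}-\alpha$, and a Gronwall argument with integrable coefficient $h\in\mathrm{L}^1(0,T)$ is exactly what the paper does in Proposition \ref{prop3.6}, and the irreducibility scheme (i)--(iii) is the same three-step programme. One small point: the full topological support of the law of $\z$ on $\C_{\mathbf{0}}([0,T];\D(\A^{\frac{1}{4}+\frac{\alpha}{2}}))$ comes from the \emph{density} of $\mathrm{Im}(\G)$ in $\D(\A^{\frac{1}{4}+\frac{\alpha}{2}})$ (part of \eqref{33}), not from the inclusion $\D(\A^{2\alpha})\subseteq\mathrm{Im}(\G)$; the latter is what drives the strong Feller step.

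Step 3, however, has a genuine gap. You apply Bismut--Elworthy--Li directly to the Galerkin system \emph{without a cut-off}, and claim a bound $|(\nabla\P_t^{(n)}\varphi)(\x)|\le C(t)\|\varphi\|_{\sup}$ with $C(t)$ independent of $n$. But the first-variation equation for $\zeta_n$ reads $\partial_t\zeta_n+\mu\A\zeta_n+\B'(\u_n)\zeta_n+\beta\mathcal{C}'(\u_n)\zeta_n=0$, and any energy estimate at the level $\A^{4\alpha-1}$ produces a Gronwall coefficient of the form $C\|\A^{\alpha}\u_n\|_{\H}^{p}$ for some $p>0$. This coefficient is \emph{random}, so after Gronwall one obtains $\int_0^t\|\A^{2\alpha}\zeta_n(s)\|_{\H}^2\,\d s\le C\|\A^{\alpha}\h\|_{\H}^2\exp\big(C\int_0^t\|\A^{\alpha}\u_n(s)\|_{\H}^{p}\,\d s\big)$, and the Bismut--Elworthy--Li bound requires the \emph{expectation} of this quantity. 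No exponential moment of $\int_0^t\|\A^{\alpha}\u_n\|_{\H}^{p}\,\d s$ is available, so the argument does not close. The paper circumvents this by introducing a smooth cut-off $\Theta_R(\|\A^{\alpha}\u_n^R\|_{\H}^2)$ in front of the nonlinear terms (system \eqref{2.15}); then $\|\A^{\alpha}\u_n^R\|_{\H}\le\sqrt{R+1}$ whenever the nonlinearity is active, the Gronwall constant becomes a deterministic $C(R,\mu,\beta)$ (see \eqref{FV4}--\eqref{321}), and one gets a clean Lipschitz-Feller estimate $|\P_{t,n}^R\varphi(\x)-\P_{t,n}^R\varphi(\y)|\le L(R,t)\|\x-\y\|_{\D(\A^{\alpha})}$. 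Passing $n\to\infty$ is then routine, and the final passage $R\to\infty$ uses $\|\P(t,\x,\cdot)-\P^R(t,\x,\cdot)\|_{\mathrm{var}}\to 0$ uniformly on balls of $\D(\A^{\alpha})$ (Lemma \ref{lem3.10}), which downgrades Lipschitz-Feller for $\P_t^R$ to strong Feller for $\P_t$. This cut-off device is not a technicality you can skip; it is the mechanism that converts the random Gronwall constant into a deterministic one.
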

	We divide the proof of Theorem \ref{main} into two parts; first we prove irreducibility and then strong Feller. Later, we  use Theorem \ref{thm3.1} to obtain  the uniqueness of invariant measure and ergodicity.
	\subsection{Irreducibility}
	In this subsection, we discuss about the irreducibility of the Markov semigroup $\{\P_t\}_{t\geq 0}$.
	Let us define
	\begin{align*}
	\Phi: \z \mapsto \u, \ \C_{\mathbf{0}}([0,T],\D(\A^{\alpha}))\cap \mathrm{L}^{\frac{4}{1-2\alpha}}(0,T;\D(\A^{\frac{1}{4}+\frac{\alpha}{2}})) \to \C([0,T];\D(\A^{\alpha})).
	\end{align*}
	Since the assumption \ref{1p5} implies the condition \eqref{33}, the condition $\mathrm{Im}(\G) \subseteq \D(\A^{\frac{1}{4}+\frac{\alpha}{2}+\epsilon'})$ yields that  $\z$ has a continuous version taking values in $\D(\A^{\frac{1}{4}+\frac{\alpha}{2}})$. Furthermore,  using Lemma 2.6, Proposition 2.7, \cite{BM1}, the assumption that $\mathrm{Im}(\G)$ is densely embedded in $\D(\A^{\frac{1}{4}+\frac{\alpha}{2}})$ implies that 
	\begin{align*}
	\overline{\mathrm{supp}} \ \mathcal{L}(\z) = \C_{\mathbf{0}}([0,T];\D(\A^{\frac{1}{4}+\frac{\alpha}{2}})),
	\end{align*}
	then the law of the process $\z$ is of full measure on
	$ \C_{\mathbf{0}}([0,T];\D(\A^{\frac{1}{4}+\frac{\alpha}{2}}))$, that is, 
	\begin{align*}
	\mathbb{P}\{\z(\cdot,\omega)\in \Gamma\}>0,
	\end{align*}
	for every non-empty open set $\Gamma \subset \C_{\mathbf{0}}([0,T];\D(\A^{\frac{1}{4}+\frac{\alpha}{2}}))$. Thus the law $\mathcal{L}(\z)$ is of full measure also in the spaces $\C_{\mathbf{0}}([0,T];\D(\A^{\alpha}))$ and $\mathrm{L}^{\frac{4}{1-2\alpha}}(0,T; \D(\A^{\frac{1}{4}+\frac{\alpha}{2}}))$ with the respective topologies. We are now ready to verify the three steps introduced earlier. 
	
	\begin{remark}
		By taking $\mu'=\frac{1}{4}+\frac{\alpha}{2},\gamma'=\alpha$ and $\beta'=\frac{1}{2}$,  we obtain  the following  interpolation inequality for $\frac{1}{4} \leq \alpha <\frac{1}{2}$:
		\begin{align}\label{IP}
		\|\A^{\frac{1}{4}+\frac{\alpha}{2}}\v\|^{2}_{\H} \leq C\|\A^{\alpha}\v\|_{\H}\|\A^{\frac{1}{2}}\v\|_{\H}.
		\end{align}
	\end{remark}
	Let us now establish Step (i). 
	\begin{proposition}\label{prop3.6}
		If $\mathrm{Im}(\G) \subseteq \D(\A^{\frac{1}{4}+\frac{\alpha}{2}+\e'})$ for some $\e'>0$, then for arbitrary $\x \in \D(\A^{\alpha}), \; \f \in \D(\A^{-\frac{1}{2}+\alpha})$, there exists a unique strong solution of the system \eqref{2.7} such that $$ \u(\cdot,\omega) \in \C([0,T];\D(\A^{\alpha}))\cap \mathrm{L}^{\frac{4}{1-2\alpha}}(0,T;\D(\A^{\frac{1}{4}+\frac{\alpha}{2}})) $$ 
		and $$ \u(\cdot, \omega)- \z(\cdot,\omega) \in \mathrm{L}^{2}(0,T;\D(\A^{\alpha+\frac{1}{2}})),$$
		for $\mathbb{P}$ a.a. $\omega \in \Omega$. It is a Markov process satisfying the Feller property in $\D(\A^{\alpha})$.
	\end{proposition}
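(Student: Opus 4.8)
The plan is to bootstrap the lower-order regularity already furnished by Proposition \ref{prop3.2}, namely $\u\in\C([0,T];\H)\cap\mathrm{L}^2(0,T;\V)$ with $\v:=\u-\z\in\mathrm{L}^2(0,T;\V)$, up to the claimed $\D(\A^\alpha)$-regularity. Since $\mathrm{Im}(\G)\subseteq\D(\A^{\frac14+\frac\alpha2+\e'})$, the classical theory of linear stochastic evolution equations (as used in \cite{BF,DaZ}) gives $\z\in\C_{\mathbf{0}}([0,T];\D(\A^{\frac14+\frac\alpha2}))$ $\mathbb{P}$-a.s.; because $\frac14+\frac\alpha2>\alpha$ for $\alpha<\frac12$, in particular $\z\in\C([0,T];\D(\A^\alpha))$, so it suffices to prove the corresponding bounds for the deterministic equation \eqref{3.2} satisfied by $\v$ for a fixed sample path of $\z$, and then add $\z$ back. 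I would carry out the estimates below on the Faedo-Galerkin approximations of \eqref{3.2} (the spectral projections commute with every $\A^{s}$, since $\{e_k\}$ diagonalises $\A$), where they are uniform in the dimension and are all derived simultaneously; then pass to the limit by weak compactness and lower semicontinuity of the norms, and identify the limit with $\v$ via the pathwise uniqueness of Theorem \ref{exis2}.

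The central estimate comes from testing \eqref{3.2} with $\A^{2\alpha}\v$ in $\H$:
\begin{align*}
\frac12\frac{\d}{\d t}\|\A^{\alpha}\v\|_{\H}^2+\mu\|\A^{\alpha+\frac12}\v\|_{\H}^2=-\langle\B(\v+\z,\v+\z),\A^{2\alpha}\v\rangle-\beta\langle\mathcal{C}(\v+\z),\A^{2\alpha}\v\rangle+\langle\f,\A^{2\alpha}\v\rangle.
\end{align*}
For the convective term I would write $\langle\B(\v+\z,\v+\z),\A^{2\alpha}\v\rangle=\langle\A^{\alpha-\frac12}\B(\v+\z,\v+\z),\A^{\alpha+\frac12}\v\rangle$ and apply Lemma \ref{lem2.2} with $\delta=\frac12-\alpha$ and $\theta=\rho=\frac14+\frac\alpha2$; its hypotheses hold precisely because $\alpha<\frac12$, yielding a bound by $C\|\A^{\frac14+\frac\alpha2}(\v+\z)\|_{\H}^2\,\|\A^{\alpha+\frac12}\v\|_{\H}$. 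Using the interpolation \eqref{IP} and Young's inequality, $\|\A^{\alpha+\frac12}\v\|_{\H}^2$ is absorbed into the viscous term, leaving a contribution bounded by $C\|\v\|_{\V}^2\|\A^{\alpha}\v\|_{\H}^2$ plus an $\mathrm{L}^1(0,T)$ function of $\|\A^{\frac14+\frac\alpha2}\z\|_{\H}$; the forcing term is handled by $\langle\f,\A^{2\alpha}\v\rangle=\langle\A^{\alpha-\frac12}\f,\A^{\alpha+\frac12}\v\rangle$ and Young's inequality. Since $\|\v\|_{\V}^2\in\mathrm{L}^1(0,T)$ and $\z\in\C([0,T];\D(\A^{\frac14+\frac\alpha2}))$, Gr\"onwall's inequality gives $\v\in\mathrm{L}^{\infty}(0,T;\D(\A^\alpha))\cap\mathrm{L}^2(0,T;\D(\A^{\alpha+\frac12}))$, and continuity in time follows from $\frac{\d\v}{\d t}\in\mathrm{L}^2(0,T;\D(\A^{\alpha-\frac12}))$ together with the standard interpolation lemma. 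Finally, \eqref{Interpolation} with $\mu'=\frac14+\frac\alpha2$, $\gamma'=\alpha$, $\beta'=\alpha+\frac12$ gives $\|\A^{\frac14+\frac\alpha2}\v\|_{\H}\le\|\A^{\alpha}\v\|_{\H}^{\frac12+\alpha}\|\A^{\alpha+\frac12}\v\|_{\H}^{\frac12-\alpha}$, and since $(\frac12-\alpha)\cdot\frac{4}{1-2\alpha}=2$ this places $\v$ in $\mathrm{L}^{\frac{4}{1-2\alpha}}(0,T;\D(\A^{\frac14+\frac\alpha2}))$; adding $\z$ gives the stated regularity of $\u$.

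The genuinely delicate term, and the source of the restriction $\alpha\ge\frac13$ for $r=3$, is $\beta\langle\mathcal{C}(\v+\z),\A^{2\alpha}\v\rangle$: since $\mathcal{C}(\u)=\mathcal{P}(|\u|^{r-1}\u)$ does not vanish on $\partial\mathcal{O}$, \eqref{2p19} does not allow moving a power $\A^{\alpha}$ with $\alpha\ge\frac14$ onto $\mathcal{C}(\cdot)$. Instead I would fix $\theta\in(0,\frac14)$, write $\langle\mathcal{C}(\v+\z),\A^{2\alpha}\v\rangle=\langle\A^{\theta}\mathcal{C}(\v+\z),\A^{2\alpha-\theta}\v\rangle$, bound $\|\A^{\theta}\mathcal{C}(\v+\z)\|_{\H}$ via \eqref{2p22} with an exponent pair $\frac1p+\frac1q=\frac12$ chosen so that $\mathbb{W}^{2\theta,q}$ and $\wi\L^{p(r-1)}$ are both controlled by $\D(\A^{\frac14+\frac\alpha2})$ (using \eqref{26} and the fractional Sobolev embeddings), and interpolate $\|\A^{2\alpha-\theta}\v\|_{\H}$ between $\D(\A^{\alpha})$ and $\D(\A^{\alpha+\frac12})$; feeding in the $\mathrm{L}^{\frac{4}{1-2\alpha}}(0,T;\D(\A^{\frac14+\frac\alpha2}))$-bound of the previous paragraph and applying Young's inequality, the Sobolev and interpolation exponents balance provided $\alpha>\frac{r-1}{2r}$, which is exactly the lower bound on $\alpha$ in \eqref{1p5} (implied by $\alpha\ge\frac14$ for $r=1,2$, and forcing $\alpha>\frac13$ for $r=3$). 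I expect the bulk of the technical work to concentrate in this step, since the cubic damping estimate and the a priori bound above have to be balanced against one another.

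The Feller property in $\D(\A^\alpha)$ is then obtained by running the same $\A^{2\alpha}$-test on the difference $\v_1-\v_2$ of two solutions of \eqref{3.2} with initial data $\x_1,\x_2\in\D(\A^\alpha)$ and the common path $\z$: estimating the difference of the convective terms by Lemma \ref{lem2.2} and that of the damping terms by \eqref{2p22} and \eqref{2.23}, one obtains $\sup_{t\in[0,T]}\|\A^{\alpha}(\v_1(t)-\v_2(t))\|_{\H}\le C\|\x_1-\x_2\|_{\D(\A^\alpha)}$ with $C$ depending only on norms already controlled. Hence $\x\mapsto\u(t,\x)$ is (Lipschitz) continuous from $\D(\A^\alpha)$ into itself $\mathbb{P}$-a.s., and, together with the uniform moment bounds, dominated convergence gives $\P_t\varphi\in\C_b(\D(\A^\alpha))$ for all $\varphi\in\C_b(\D(\A^\alpha))$; the Markov property is inherited from the $\H$-valued process of Theorem \ref{exis2}.
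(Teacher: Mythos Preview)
Your overall architecture matches the paper: test \eqref{3.2} with $\A^{2\alpha}\v$, handle the convective term via Lemma \ref{lem2.2} with $\delta=\tfrac12-\alpha$, $\theta=\rho=\tfrac14+\tfrac\alpha2$ together with \eqref{IP}, close by Gr\"onwall using the $\mathrm{L}^\infty(\H)\cap\mathrm{L}^2(\V)$ information from Proposition \ref{prop3.2}, recover time-continuity from $\frac{\d\v}{\d t}\in\mathrm{L}^2(0,T;\D(\A^{\alpha-\frac12}))$, and interpolate via \eqref{Interpolation} to get the $\mathrm{L}^{\frac{4}{1-2\alpha}}$ bound. The Feller sketch is also fine and is essentially the continuous-dependence estimate the paper carries out just after Proposition \ref{prop3.6}.

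The gap is in your treatment of the damping term. As written it is circular: you invoke Gr\"onwall and the $\mathrm{L}^{\frac{4}{1-2\alpha}}(0,T;\D(\A^{\frac14+\frac\alpha2}))$ bound \emph{before} estimating $\beta\langle\mathcal{C}(\v+\z),\A^{2\alpha}\v\rangle$, and then propose to ``feed in'' that very bound to control the damping contribution. All three right-hand terms must be estimated simultaneously, using only the a priori information from Proposition \ref{prop3.2}, before a single application of Gr\"onwall. Moreover, the detour through \eqref{2p19}--\eqref{2p22} with $\theta\in(0,\tfrac14)$ is unnecessary here. The paper simply takes $\theta=0$: since $\mathcal{C}(\v+\z)\in\H$ one has
\[
|\langle\mathcal{C}(\v+\z),\A^{2\alpha}\v\rangle|\le\|\v+\z\|_{\wi\L^{2r}}^{\,r}\,\|\A^{2\alpha}\v\|_{\H},
\]
and because $2\alpha<\alpha+\tfrac12$ for $\alpha<\tfrac12$, the factor $\|\A^{2\alpha}\v\|_{\H}$ interpolates between $\|\v\|_{\H}$ (already controlled) and $\|\A^{\alpha+\frac12}\v\|_{\H}$ (absorbable). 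Then $\|\v\|_{\wi\L^{2r}}$ is handled by the Gagliardo--Nirenberg inequality \eqref{2.1} with $m=2\alpha$, which needs precisely $\alpha\ge\tfrac{r-1}{2r}$; this is where the restriction $\alpha>\tfrac13$ for $r=3$ actually enters, not the boundary issue \eqref{2p19}. The $\z$ part is bounded by $\|\A^{\alpha}\z\|_{\H}$ via \eqref{26}. With this single correction your argument closes exactly as the paper's does.
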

	\begin{proof}
	One can make the following calculations rigorous by considering a Faedo-Galerkin approximated system corresponding to the system \eqref{3.2}. 	We take the inner product with $\A^{2\alpha}\v(\cdot)$ to the first equation in \eqref{3.2} to obtain 
		\begin{align}\label{Prop2.10a}
	&	\frac{1}{2}\frac{\d}{\d t}\|\A^{\alpha}\v(t)\|_{\H}^2 + \mu\| \A^{\alpha+\frac{1}{2}}\v(t)\|_{\H}^2 \nonumber\\&=  \langle \f, \A^{2\alpha}\v(t) \rangle - \langle  \B(\v(t)+\z(t),\v(t)+\z(t)), \A^{2\alpha}\v(t) \rangle - \beta \langle \mathcal{C}(\v(t)+\z(t)), \A^{2\alpha}\v(t) \rangle. 
		\end{align} 
		Using the Cauchy-Schwarz inequality and Young's inequality, we estimate $|\langle \f, \A^{2\alpha}\v \rangle |$ as 
		\begin{align*}
		|\langle \f, \A^{2\alpha}\v \rangle| &= |\langle \A^{\alpha-\frac{1}{2}} \f, \A^{\alpha+\frac{1}{2}}\v \rangle| \leq \|\A^{\alpha-\frac{1}{2}} \f\|_{\H}\|\A^{\alpha+\frac{1}{2}}\v \|_{\H}\leq C\|\A^{\alpha-\frac{1}{2}} \f\|^{2}_{\H}+\frac{\mu}{8} \|\A^{\alpha+\frac{1}{2}}\v \|^{2}_{\H}.
		\end{align*}
		Making use of the Cauchy-Schwarz inequality, Lemma \ref{lem2.2} with  $\theta=\rho=\frac{1}{4}+\frac{\alpha}{2}$ and $\delta= -\alpha+\frac{1}{2}$, the interpolation inequality \eqref{IP} and Young's inequality, we estimate $|\langle  \B(\v+\z,\v+\z), \A^{2\alpha}\v \rangle |$ as 
		\begin{align*}
		|\langle  \B(\v+\z,\v+\z), \A^{2\alpha}\v \rangle |&= \langle \A^{\alpha-\frac{1}{2}} \B(\v+\z,\v+\z), \A^{\alpha+\frac{1}{2}}\v \rangle \no \\&\leq \|\A^{\alpha-\frac{1}{2}} \B(\v+\z,\v+\z)\|_{\H}\|\A^{\alpha+\frac{1}{2}}\v\|_{\H}\no\\&\leq  C\|\A^{\frac{1}{4}+\frac{\alpha}{2}}(\v+\z)\|^{2}_{\H}\|\A^{\alpha+\frac{1}{2}}\v\|_{\H}\\& \leq C\big(\|\A^{\frac{1}{4}+\frac{\alpha}{2}}\v\|^{2}_{\H}+\|\A^{\frac{1}{4}+\frac{\alpha}{2}}\z\|^{2}_{\H}\big)\|\A^{\alpha+\frac{1}{2}}\v\|_{\H}\nonumber\\&\leq C\|\A^{\alpha}\v\|_{\H}\|\A^{\frac{1}{2}}\v\|_{\H}\|\A^{\alpha+\frac{1}{2}}\v\|_{\H}+\|\A^{\frac{1}{4}+\frac{\alpha}{2}}\z\|^{2}_{\H}\|\A^{\alpha+\frac{1}{2}}\v\|_{\H}\nonumber\\&\leq  \frac{\mu}{8}\|\A^{\alpha+\frac{1}{2}}\v\|^{2}_{\H}+C\|\A^{\alpha}\v\|^{2}_{\H}\|\A^{\frac{1}{2}}\v\|^{2}_{\H}+C\|\A^{\frac{1}{4}+\frac{\alpha}{2}}\z\|^{4}_{\H}. 
		\end{align*}
		For the final term in the right hand side of the equality \eqref{Prop2.10a}, we use the Cauchy-Schwarz inequality and the fact that $(a+b)^p\leq 2^{p-1}(a^p+b^p)$, for all $a,b\geq 0$ and $1\leq p<\infty$ to estimate it as 
		\begin{align*} 
		|\langle \mathcal{C}(\v+\z), \A^{2\alpha}\v \rangle | &\leq \|\mathcal{C}(\v+\z)\|_{\H}\|\A^{2\alpha}\v\|_{\H}\leq 2^{r-1}\left(\|\v\|_{\wi\L^{2r}}^r+\|\z\|_{\wi\L^{2r}}^r\right)\|\A^{2\alpha}\v\|_{\H}. 
		\end{align*}
		Using  the interpolation inequality \eqref{Interpolation}  with $\mu' = 2\alpha, \gamma' = \alpha+\frac{1}{2}, \beta' = 0$ (so that $\theta' = \frac{4\alpha}{2\alpha+1}$), Gagliardo-Nirenberg's inequality \eqref{2.1} for $p=2r, m=2\alpha, j=0, n=l=q=2$ (so that $\theta=\frac{r-1}{2\alpha r}$), interpolation inequality \eqref{Interpolation} with $\mu' =  \alpha, \gamma' = \frac{1}{2}, \beta' = 0$ (so that $\theta'=2\alpha$) and Young's inequality, we estimate $\|\v\|_{\wi\L^{2r}}^r\|\A^{2\alpha}\v\|_{\H}$ as 
		\begin{align*} 
		\|\v\|_{\wi\L^{2r}}^r\|\A^{2\alpha}\v\|_{\H}&\leq \|\v\|_{\wi\L^{2r}}^r\|\A^{\alpha+\frac{1}{2}}\v\|_{\H}^{\frac{4\alpha}{2\alpha+1}}\|\v\|^{\frac{1-2\alpha}{2\alpha+1}}_{\H}\nonumber\\&\leq\frac{\mu}{2^{r+2}} \|\A^{\alpha+\frac{1}{2}}\v\|_{\H}^{2}+C\|\v\|_{\wi\L^{2r}}^{r(1+2\alpha)} \|\v\|_{\H}^{1-2\alpha}\nonumber\\&\leq\frac{\mu}{2^{r+2}} \|\A^{\alpha+\frac{1}{2}}\v\|_{\H}^{2}+C\|\A^{\alpha}\v\|_{\H}^{\frac{(r-1)(1+2\alpha)}{2\alpha}}\|\v\|_{\H}^{\frac{(1-(1-2\alpha)r)(1+2\alpha)}{2\alpha}}\|\v\|_{\H}^{1-2\alpha}\nonumber\\&=\frac{\mu}{2^{r+2}} \|\A^{\alpha+\frac{1}{2}}\v\|_{\H}^{2}+C\|\A^{\alpha}\v\|_{\H}^{r-1}\|\A^{\alpha}\v\|_{\H}^{\frac{r-1}{2\alpha}}\|\v\|_{\H}^{\frac{(1-(1-2\alpha)r)(1+2\alpha)}{2\alpha}+1-2\alpha}\nonumber\\&\leq \frac{\mu}{2^{r+2}} \|\A^{\alpha+\frac{1}{2}}\v\|_{\H}^{2}+C\|\A^{\alpha}\v\|_{\H}^{r-1}\|\A^{\frac{1}{2}}\v\|_{\H}^{r-1}\|\v\|_{\H}^{3-r+2\alpha(r-1)},
		\end{align*}
		for $r= 1,2,3$. For the term  $\|\z\|_{\wi\L^{2r}}^r\|\A^{2\alpha}\v\|_{\H}$, once again we use the interpolation inequality \eqref{IP}  with $ \mu' =  2\alpha, \gamma' = \alpha, \beta' = \alpha+\frac{1}{2}$ (so that $ \theta' = 1-2\alpha$) and Young's inequality  to obtain 
		\begin{align*}
		\|\z\|_{\wi\L^{2r}}^r\|\A^{2\alpha}\v\|_{\H}&\leq  \|\z\|^{r}_{\wi\L^{2r}}\|\A^{\alpha}\v\|^{1-2\alpha}_{\H}\|\A^{\alpha+\frac{1}{2}}\v\|^{2\alpha}_{\H}\nonumber\\&\leq  \frac{\mu}{2^{r+2}}\|\A^{\alpha+\frac{1}{2}}\v\|^{2}_{\H} +C \|\z\|^{\frac{r}{1-\alpha}}_{\wi\L^{2r}}\|\A^{\alpha}\v\|^{\frac{1-2\alpha}{1-\alpha}}_{\H}\nonumber\\&\leq  \frac{\mu}{2^{r+2}}\|\A^{\alpha+\frac{1}{2}}\v\|^{2}_{\H} +\frac{1}{2^r} \|\A^{\alpha}\v\|^{2}_{\H}+C\|\z\|^{2r}_{\wi\L^{2r}}.
		\end{align*}
		Combining the above estimates and substituting it in \eqref{Prop2.10a}, we deduce that
		\begin{align}\label{Prop3.5.1}\nonumber
	&	\frac{1}{2} \frac{\d}{\d t}\|\A^{\alpha}\v(t)\|^{2}_{\H}+\frac{\mu}{2}\|\A^{\alpha+\frac{1}{2}}\v(t)\|^{2}_{\H} \nonumber\\&\leq C\|\A^{\alpha-\frac{1}{2}} \f\|^{2}_{\H}+C\|\A^{\frac{1}{4}+\frac{\alpha}{2}}\z(t)\|^{4}_{\H}+C\|\A^{\alpha}\v(t)\|^{2}_{\H}\|\A^{\frac{1}{2}}\v(t)\|^{2}_{\H} \nonumber \\& \quad +C \|\A^{\alpha}\v(t)\|^{r-1}_{\H}\|\A^{\frac{1}{2}}\v(t)\|^{r-1}_{\H}\|\v(t)\|^{3-r+2\alpha(r-1)}_{\H}+\frac{1}{2} \|\A^{\alpha}\v(t)\|^{2}_{\H}+C\|\z(t)\|^{2r}_{\wi\L^{2r}},
		\end{align}
		for $r=1,2,3$,
for a.e. $t\in[0,T]$,		where  $C$ denotes various positive constants. For the case $r=3$, applying Gronwall's inequality in \eqref{Prop3.5.1}, we find 
		\begin{align*}
		\|\A^{\alpha}\v(t)\|^{2}_{\H}&\leq\left\{\|\A^{\alpha}\x\|_{\H}^2+CT\|\A^{\alpha-\frac{1}{2}} \f\|^{2}_{\H}+C\int_0^T\|\A^{\frac{1}{4}+\frac{\alpha}{2}}\z(t)\|^{4}_{\H}\d t+C\int_0^T\|\z(t)\|_{\wi\L^{2r}}^{2r}\d t\right\}\nonumber\\&\quad\times\exp\left\{C\left(\sup_{t\in[0,T]}\|\v(t)\|_{\H}^{4\alpha}+1\right)\int_0^T\|\A^{\frac{1}{2}}\v(t)\|_{\H}^2\d t+T\right\}\nonumber\\&\leq \left\{\|\A^{\alpha}\x\|_{\H}^2+CT\|\A^{\alpha-\frac{1}{2}} \f\|^{2}_{\H}+CT^{2\alpha}\left(\int_0^T\|\A^{\frac{1}{4}+\frac{\alpha}{2}}\z(t)\|^{\frac{4}{1-2\alpha}}_{\H}\d t\right)^{1-2\alpha}\right.\nonumber\\&\qquad\left.+CT\sup_{t\in[0,T]}\|\A^{\alpha}\z(t)\|_{\H}^{2r}\right\}\exp\left\{C\left(\sup_{t\in[0,T]}\|\v(t)\|_{\H}^{4\alpha}+1\right)\int_0^T\|\A^{\frac{1}{2}}\v(t)\|_{\H}^2\d t+T\right\},
		\end{align*}
		for all $t\in[0,T]$ and $\alpha\in[\frac{1}{3},\frac{1}{2})$, where we used \eqref{26} also. 	By  Proposition \ref{prop3.2},  we know that $\v \in  \C([0,T];\H)\cap \mathrm{L}^{2}(0,T;\V)$ and $\z\in\C_{\mathbf{0}}([0,T];\D(\A^{\alpha}))\cap\mathrm{L}^{\frac{4}{1-2\alpha}}(0,T; \D(\A^{\frac{1}{4}+\frac{\alpha}{2}}))$. Thus, the right hand side of the above inequality is finite and for $\x\in\D(\A^{\alpha})$, it is immediate that  $\v\in\mathrm{L}^{\infty}(0,T;\D(\A^{\alpha}))\cap\mathrm{L}^2(0,T;\D(\A^{\alpha+\frac{1}{2}}))$.  Now, we use the classical argument (see Chapter 3, \cite{RT}) to prove that the solution $\v\in \C([0,T];\D(\A^{\alpha}))$ by showing $\v \in \mathrm{L}^{\infty}(0,T;\D(\A^{\alpha}))\cap \mathrm{L}^{2}(0,T;\D(\A^{\frac{1}{2}+\alpha}))\cap \mathrm{H}^{1}(0,T;\D(\A^{-\frac{1}{2}+\alpha}))$. Thus, it is left to show that $
		\frac{\d \v}{\d t} \in \mathrm{L}^{2}(0,T;\D(\A^{-\frac{1}{2}+\alpha}),
		$ $\mathbb{P}$-a.s.
		For any function $\phi \in \mathrm{L}^{2}(0,T;\D(\A^{\frac{1}{2}-\alpha}))$, we have 
		\begin{align}\label{Prop3.5} \nonumber
		&	\int_{0}^{T} \bigg| \bigg\langle \frac{\d \v(t)}{\d t}, \phi(t) \bigg\rangle\bigg| \d t \nonumber\\&\leq \mu 	\int_{0}^{T} \big| \langle \A\v(t), \phi(t) \rangle\big| \d t+	\int_{0}^{T} \big| \langle \f,\phi(t) \rangle\big|\d t \nonumber \\&\quad+ 	\int_{0}^{T} \big| \langle \B(\v(t)+\z(t)), \phi(t) \rangle \big| \d t+\beta 	\int_{0}^{T} \big| \langle \mathcal{C}(\v(t)+\z(t)),\phi(t)\rangle\big| \d t\nonumber\\&\leq\mu\int_0^T\|\A^{\alpha+\frac{1}{2}}\v(t)\|_{\H} \|\A^{\frac{1}{2}-\alpha}\phi(t)\|_{\H}  \d t +\int_0^T\|\A^{\alpha-\frac{1}{2}}\f\|_{\H}\|\A^{\frac{1}{2}-\alpha}\phi(t)\|_{\H}\d t \nonumber\\&\quad+\int_{0}^{T}\|\A^{\alpha-\frac{1}{2}}\B(\v+\z)\|_{\H}\|\A^{\frac{1}{2}-\alpha}\phi(t)\|_{\H}\d t+\beta \int_{0}^{T}\|\A^{\alpha-\frac{1}{2}}\mathcal{C}(\v+\z)\|_{\H}\|\A^{\alpha-\frac{1}{2}}\phi(t)\|_{\H} \d t\nonumber\\&\leq \left\{\mu	\left(\int_{0}^{T}   \|\A^{\alpha+\frac{1}{2}}\v(t)\|^{2}_{\H}   \d t\right)^{\frac{1}{2}}+T^{\frac{1}{2}}\|\A^{\alpha-\frac{1}{2}}\f\|_{\H}+C \left( \int_{0}^{T} \|\A^{\frac{1}{4}+\frac{\alpha}{2}}(\v(t)+\z(t))\|^{2}_{\H}\d t\right)^{\frac{1}{2}}\right.\nonumber\\&\qquad\left.+\bigg(\int_{0}^{T}\|\v(t)+\z(t)\|^{2r}_{\wi\L^{2r}}\d t\bigg)^{\frac{1}{2}}\right\}\left(\int_{0}^{T}\|\A^{\frac{1}{2}-\alpha}\phi(t)\|^{2}_{\H}\d t\right)^{1/2}.
		\end{align} 
		Using Sobolev's inequality and the interpolation inequality \eqref{Interpolation}, we infer that the term inside the parenthesis in the right hand side of the inequality \eqref{Prop3.5} is finite for any  $ \phi \in \mathrm{L}^{2}(0,T;\D(\A^{\frac{1}{2}-\alpha}))$, which implies $ \frac{\d \v}{\d t} \in \mathrm{L}^{2}(0,T;\D(\A^{-\frac{1}{2}+\alpha}))$ and hence we easily obtain   $\v\in \C([0,T];\D(\A^{\alpha}))$.	Once again using the interpolation inequality \eqref{Interpolation}, we find 
		\begin{align}\label{Aestimate}
		\|\A^{\frac{\alpha}{2}+\frac{1}{4}}\v\|_{\H} \leq \|\A^{\alpha}\v\|^{\frac{1}{2}+\alpha}_{\H}\|\A^{\alpha+\frac{1}{2}}\v\|^{\frac{1}{2}-\alpha}_{\H},
		\end{align}
		which immediately gives $\v \in \mathrm{L}^{\frac{4}{1-2\alpha}}(0,T;\D(\A^{\frac{1}{4}+\frac{\alpha}{2}}))$. Therefore, we deduce that $\u=\v+\z \in \C([0,T];\D(\A^{\alpha}))\cap \mathrm{L}^{\frac{4}{1-2\alpha}}(0,T;\D(\A^{\frac{1}{4}+\frac{\alpha}{2}})), \; \mathbb{P}$-a.s. Rest of the proof for the cases $r=1,2$ can be carry out in a similar way. 
	\end{proof}

\begin{remark}\label{rem3.7}
If the assumption \eqref{1p6} is in hand, then $ \mathrm{Im}(\G)\subset\D(\A^{\frac{\ell}{2}+\e})$ implies that the solution $\z(\cdot)$ of the Ornstein-Uhlenbeck equation \eqref{3.1} has $\mathbb{P}$-a.s., $\C([0,T];\D(\A^{\frac{\ell}{2}}))$ paths. For $1<\ell<\frac{3}{2}$ and $\f\in\D(\A^{\frac{\ell-1}{2}})$, taking the inner product with $\A^{\ell}\v(\cdot)$  to the first equation in \eqref{3.2}, we find 
\begin{align}\label{310}
&\frac{1}{2}\frac{\d}{\d t}\|\A^{\frac{\ell}{2}}\v(t)\|_{\H}^2+\mu\|\A^{\frac{\ell+1}{2}}\v(t)\|_{\H}^2  \nonumber\\&=  \langle \f, \A^{\ell}\v(t) \rangle - \langle  \B(\v(t)+\z(t),\v(t)+\z(t)), \A^{\ell}\v(t) \rangle - \beta \langle \mathcal{C}(\v(t)+\z(t)), \A^{\ell}\v(t) \rangle. 
\end{align}
Using the Cauchy-Schwarz and Young's inequalities, we estimate $|\langle \f, \A^{\ell}\v \rangle| $ as 
\begin{align*}
|\langle \f, \A^{\ell}\v \rangle|=|\langle\A^{\frac{\ell-1}{2}}\f,\A^{\frac{\ell+1}{2}}\v\rangle|\leq\|\A^{\frac{\ell-1}{2}}\f\|_{\H}\|\A^{\frac{\ell+1}{2}}\v\|_{\H}\leq\frac{\mu}{8}\|\A^{\frac{\ell+1}{2}}\v\|_{\H}^2+\frac{2}{\mu}\|\A^{\frac{\ell-1}{2}}\f\|_{\H}^2. 
\end{align*}
We estimate the term $| \langle  \B(\v+\z,\v+\z), \A^{\ell}\v \rangle|$ using \eqref{2p13}, \eqref{2p14} and Young's inequality  as
\begin{align*}
| \langle  \B(\v+\z,\v+\z), \A^{\ell}\v \rangle|&=| \langle\A^{\frac{\ell-1}{2}}  \B(\v+\z,\v+\z), \A^{\frac{\ell+1}{2}}  \v \rangle|\nonumber\\&\leq\|\A^{\frac{\ell-1}{2}}  \B(\v+\z,\v+\z)\|_{\H}\|\A^{\frac{\ell+1}{2}} \v\|_{\H}\nonumber\\&\leq C\|\A^{\frac{\ell}{2}}(\v+\z)\|_{\H}^2\|\A^{\frac{\ell+1}{2}} \v\|_{\H}\nonumber\\&\leq\frac{\mu}{8}\|\A^{\frac{\ell+1}{2}} \v\|_{\H}^2+C\|\A^{\frac{\ell}{2}}\v\|_{\H}^4+C\|\A^{\frac{\ell}{2}}\z\|_{\H}^2\|\A^{\frac{\ell}{2}}\v\|_{\H}^2+C\|\A^{\frac{\ell}{2}}\z\|_{\H}^4, 
\end{align*}
provided $1<\ell<\frac{3}{2}$. Making use of \eqref{2p19}, \eqref{2p22}, and Cauchy-Schwarz, Gagliardo-Nirenberg's, Sobolev's  and Young's inequalities, we estimate $|\langle\mathcal{C}(\v+\z), \A^{\ell}\v\rangle|$ as 
\begin{align*}
& |\langle\mathcal{C}(\v+\z), \A^{\ell}\v\rangle|\nonumber\\&=|\langle \A^{\frac{\ell-1}{2}}\mathcal{C}(\v+\z), \A^{\frac{\ell+1}{2}}\v\rangle|\leq\|\A^{\frac{\ell-1}{2}}\mathcal{C}(\v+\z)\|_{\H}\|\A^{\frac{\ell+1}{2}}\v\|_{\H}\nonumber\\&\leq C\|\v+\z\|_{\wi\L^{2r}}^{r-1}\|\v+\z\|_{\mathbb{W}^{\ell-1,2r}}\|\A^{\frac{\ell+1}{2}}\v\|_{\H}\nonumber\\&\leq C\|\v\|_{\wi\L^{2r}}^{\frac{\ell r^2-\ell r+2r}{\ell r+1}}\|\A^{\frac{\ell+1}{2}}\v\|_{\H}^{\frac{2\ell r+1-r}{\ell r+1}}+C\|\v\|_{\wi\L^{2r}}^{r-1}\|\z\|_{\wi\L^{2r}}^{\frac{1}{r(\ell-1)+1}}\|\A^{\frac{\ell}{2}}\z\|_{\H}^{\frac{r(\ell-1)}{r(\ell-1)+1}}\|\A^{\frac{\ell+1}{2}}\v\|_{\H}\nonumber\\&\quad+C\|\z\|_{\wi\L^{2r}}^{r-1}\|\v\|_{\wi\L^{2r}}^{\frac{r+1}{\ell r+1}}\|\A^{\frac{\ell+1}{2}}\v\|_{\H}^{\frac{2\ell r+1-r}{\ell r+1}}+C\|\z\|_{\wi\L^{2r}}^{\frac{r[(\ell-1)(r-1)+1]}{r(\ell-1)+1}}\|\A^{\frac{\ell}{2}}\z\|_{\H}^{\frac{r(\ell-1)}{r(\ell-1)+1}}\|\A^{\frac{\ell+1}{2}}\v\|_{\H}\nonumber\\&\leq\frac{\mu}{4}\|\A^{\frac{\ell+1}{2}}\v\|_{\H}^2+ C\|\v\|_{\wi\L^{2r}}^{\frac{2(r-1)(\ell r+1)}{r+1}}\|\A^{\frac{\ell}{2}}\v\|_{\H}^2+C\|\A^{\frac{\ell}{2}}\z\|_{\H}^2\|\v\|_{\wi\L^{2r}}^{2(r-1)}\nonumber\\&\quad+C\|\A^{\frac{\ell}{2}}\z\|_{\H}^{\frac{2(\ell r+1)(r-1)}{r+1}}\|\A^{\frac{\ell}{2}}\v\|_{\H}^2+C\|\A^{\frac{\ell}{2}}\z\|_{\H}^{2r},
\end{align*}
for $\ell>1$. Combining the above estimates, substituting in \eqref{310} and then integrating from $0$ to $t$, one can deduce that 
\begin{align}
&\|\A^{\frac{\ell}{2}}\v(t)\|_{\H}^2+\mu\int_0^t\|\A^{\frac{\ell+1}{2}}\v(s)\|_{\H}^2\d s\nonumber\\&\leq \|\A^{\frac{\ell}{2}}\x\|_{\H}^2+\frac{4t}{\mu}\|\A^{\frac{\ell-1}{2}}\f\|_{\H}^2+C\int_0^t\left(\|\A^{\frac{\ell}{2}}\v(s)\|_{\H}^2+\|\A^{\frac{\ell}{2}}\z(s)\|_{\H}^2+\|\v(s)\|_{\wi\L^{2r}}^{\frac{2(r-1)(\ell r+1)}{r+1}}\right.\nonumber\\&\qquad\left.+\|\A^{\frac{\ell}{2}}\z(s)\|_{\H}^2\|\v(s)\|_{\wi\L^{2r}}^{2(r-2)}+\|\A^{\frac{\ell}{2}}\z(s)\|_{\H}^{\frac{2(\ell r+1)(r-1)}{r+1}}\right)\|\A^{\frac{\ell}{2}}\v(s)\|_{\H}^2\d s\nonumber\\&\quad+C\int_0^t\left(\|\A^{\frac{\ell}{2}}\z(s)\|_{\H}^4+\|\A^{\frac{\ell}{2}}\z(s)\|_{\H}^{2r}\right)\d s,
\end{align}
for all $t\in[0,T]$. An application of Gronwall's inequality easily implies
\begin{align}
&\sup_{t\in[0,T]}\|\A^{\frac{\ell}{2}}\v(t)\|_{\H}^2+\mu\int_0^T\|\A^{\frac{\ell+1}{2}}\v(t)\|_{\H}^2\d t\nonumber\\&\leq \left\{\|\A^{\frac{\ell}{2}}\x\|_{\H}^2+\frac{4T}{\mu}\|\A^{\frac{\ell-1}{2}}\f\|_{\H}^2+C\int_0^T\left(\|\A^{\frac{\ell}{2}}\z(t)\|_{\H}^4+\|\A^{\frac{\ell}{2}}\z(t)\|_{\H}^{2r}\right)\d t\right\}\nonumber\\&\quad\times\exp\left\{C\int_0^T\left[\|\A^{\frac{\ell}{2}}\v(t)\|_{\H}^2+\|\A^{\frac{r-1}{2r}}\v(t)\|_{\H}^{\frac{2(r-1)(\ell r+1)}{r+1}}+\|\A^{\frac{\ell}{2}}\z(t)\|_{\H}^2\left(1+\|\A^{\frac{r-1}{2r}}\v(t)\|_{\H}^{2(r-2)}\right)\right.\right.\nonumber\\&\qquad\left.\left.+\|\A^{\frac{\ell}{2}}\z(t)\|_{\H}^{\frac{2(\ell r+1)(r-1)}{r+1}}\right]\d t\right\},
\end{align} 
 where we used Sobolev's inequality also. Since $\frac{1}{2}<\frac{\ell}{2}<\frac{3}{4}\leq2\alpha+\frac{1}{2}<1$, we easily have $\int_0^T\|\A^{\frac{\ell}{2}}\v(t)\|_{\H}^2\d t\leq \int_0^T\|\A^{2\alpha+\frac{1}{2}}\v(t)\|_{\H}^2\d t<\infty$, and hence we find $\v\in\mathrm{L}^{\infty}(0,T;\D(\A^{\frac{\ell}{2}}))\cap\mathrm{L}^2(0,T;\D(\A^{\frac{\ell+1}{2}}))$, $\mathbb{P}$-a.s., for $1<\ell<\frac{3}{2}$. A calculation similar to the estimate \eqref{Prop3.5} gives $\frac{\d\v}{\d t}\in\mathrm{L}^2(0,T;\D(\A^{\frac{\ell-1}{2}}))$, $\mathbb{P}$-a.s., and one can conclude that $\v\in\C([0,T];\D(\A^{\frac{\ell}{2}}))\cap\mathrm{L}^2(0,T;\D(\A^{\frac{\ell+1}{2}}))$, and $\u=\v+\z\in \C([0,T];\D(\A^{\frac{\ell}{2}}))$, $\mathbb{P}$-a.s.

For $\ell=1$, using the assumption \eqref{1p7}, we know that $\mathrm{Im}(\G)\subset\D(\A^{\frac{1}{2}+\e_1})$, and hence $\z\in\C([0,T];\D(\A^{\frac{1}{2}+\e})),$ $\mathbb{P}$-a.s., for some $\e>0$. Choosing $\delta=0$, $\theta=\frac{1}{2}$ and $\rho=\frac{1}{2}+\e$ in \eqref{2.14}, we get $\B:\D(\A^{\frac{1}{2}})\times\D(\A^{\frac{1}{2}+\e})\to\H$.
From \eqref{310}, we infer that 
\begin{align*}
&\frac{1}{2}\frac{\d}{\d t}\|\A^{\frac{1}{2}}\v(t)\|_{\H}^2+\mu\|\A\v(t)\|_{\H}^2  \nonumber\\&=  \langle \f, \A\v(t) \rangle - \langle  \B(\v(t)+\z(t),\v(t)+\z(t)), \A\v(t) \rangle - \beta \langle \mathcal{C}(\v(t)+\z(t)), \A\v(t) \rangle\nonumber\\&\leq\|\f\|_{\H}\|\A\v(t)\|_{\H}+C\|\v(t)\|_{\H}^{1/2}\|\A^{\frac{1}{2}}\v(t)\|_{\H}\|\A\v(t)\|_{\H}^{3/2}+C\|\v(t)\|_{\H}^{1/2}\|\A^{\frac{1}{2}}\z(t)\|_{\H}\|\A\v(t)\|_{\H}^{3/2}\nonumber\\&\quad+C\|\z(t)\|_{\H}^{1/2}\|\A^{\frac{1}{2}}\z(t)\|_{\H}^{1/2}\|\A^{\frac{1}{2}}\v(t)\|_{\H}^{1/2}\|\A\v(t)\|_{\H}^{3/2}+C\|\A^{\frac{1}{2}}\z(t)\|_{\H}\|\A^{\frac{1}{2}+\e}\z(t)\|_{\H}\|\A\v(t)\|_{\H}\nonumber\\&\quad+\beta\|\v(t)+\z(t)\|_{\wi\L^{2r}}^{r}\|\A\v(t)\|_{\H}\nonumber\\&\leq\frac{\mu}{2}\|\A\v(t)\|_{\H}^2+\frac{1}{2\mu}\|\f\|_{\H}^2+C\|\v(t)\|_{\H}^2\|\A^{\frac{1}{2}}\v(t)\|_{\H}^4+C\|\A^{\frac{1}{2}}\z(t)\|_{\H}^4\|\A^{\frac{1}{2}}\v(t)\|_{\H}^2\nonumber\\&\quad+C\|\A^{\frac{1}{2}}\z(t)\|_{\H}^2\|\A^{\frac{1}{2}+\e}\z(t)\|_{\H}^2+C\|\v(t)\|_{\H}^2\|\A^{\frac{1}{2}}\v(t)\|_{\H}^{2(r-1)}+C\|\A^{\frac{1}{2}}\z(t)\|_{\H}^{2r}, 
\end{align*}
where we used \eqref{219}, \eqref{220}. Integrating the above inequality from $0$ to $t$, we find 
\begin{align*}
&\|\A^{\frac{1}{2}}\v(t)\|_{\H}^2+\mu\int_0^t\|\A\v(s)\|_{\H}^2\d s\nonumber\\&\leq\|\A^{\frac{1}{2}}\x\|_{\H}^2+\frac{t}{\mu}\|\f\|_{\H}^2+C\int_0^t\left(\|\v(s)\|_{\H}^2\|\A^{\frac{1}{2}}\v(s)\|_{\H}^2+\|\A^{\frac{1}{2}}\z(s)\|_{\H}^4\right.\nonumber\\&\qquad\left.+\|\v(s)\|_{\H}^2\|\A^{\frac{1}{2}}\v(s)\|_{\H}^{2(r-2)}\right)\|\A^{\frac{1}{2}}\v(s)\|_{\H}^2\d s\nonumber\\&\quad+C\int_0^t\left(\|\A^{\frac{1}{2}}\z(s)\|_{\H}^2\|\A^{\frac{1}{2}+\e}\z(s)\|_{\H}^2+\|\A^{\frac{1}{2}}\z(s)\|_{\H}^{2r}\right)\d s,
\end{align*}
for all $t\in[0,T]$. An application of Gronwall's inequality yields 
\begin{align*}
&\sup_{t\in[0,T]}\|\A^{\frac{1}{2}}\v(t)\|_{\H}^2+\mu\int_0^T\|\A\v(t)\|_{\H}^2\d t\nonumber\\&\leq\left\{\|\A^{\frac{1}{2}}\x\|_{\H}^2+\frac{T}{\mu}\|\f\|_{\H}^2+C\int_0^t\left(\|\A^{\frac{1}{2}}\z(t)\|_{\H}^2\|\A^{\frac{1}{2}+\e}\z(t)\|_{\H}^2+\|\A^{\frac{1}{2}}\z(t)\|_{\H}^{2r}\right)\d t\right\}\nonumber\\&\quad\times\exp\left\{C\int_0^T\left(\|\v(t)\|_{\H}^2\|\A^{\frac{1}{2}}\v(t)\|_{\H}^2+\|\A^{\frac{1}{2}}\z(t)\|_{\H}^4+\|\v(t)\|_{\H}^2\|\A^{\frac{1}{2}}\v(t)\|_{\H}^{2(r-2)}\right)\d t\right\},
\end{align*}
and the right hand side is finite for $r=2,3$. Thus, one can easily conclude that $\v\in\C([0,T];\D(\A^{\frac{1}{2}}))\cap\mathrm{L}^2(0,T;\D(\A))$, and $\u=\v+\z\in \C([0,T];\D(\A^{\frac{1}{2}}))$, $\mathbb{P}$-a.s.
\end{remark}

	Now, we are in a position  to verify 	Steps (ii) and (iii). We connect the irreducibility with a control problem (cf. \cite{GDJZ}). 
	Given $\x, \y \in \D(\A^{\alpha}),\ T>0$, choose any $0<t_{0}<t_{1}<T$, let us define $\bar{\u},$ which connects $\x$ to $\y$ in the interval $[0,T]$ as 
	\begin{align*}
	\bar{\u}(t) &=e^{-t\A}\x, \quad t\in [0,t_{0}],\\
	\bar{\u}(t)& = e^{-(T-t)\A}\y, \quad t\in [t_{1},T], \\
	\bar{\u}(t)&= \bar{\u}(t_{0}) + \frac{t-t_{0}}{t_{1}-t_{0}}(\bar{\u}(t_{1})-\bar{\u}(t_{0})), \ t\in (t_{0},t_{1}). 
	\end{align*}
	For $\x\in\D(\A^{\alpha})$, using spectral theorem, we obtain 
	\begin{align*}
	\int_0^T\|\A^{\alpha+\frac{1}{2}}e^{-t\A}\x\|_{\H}^2\d t=\sum_{j=1}^{\infty}\lambda_j^{2\alpha+1}\int_0^Te^{-2t\lambda_j}|(\x,e_j)|^2\d t\leq\frac{1}{2}\sum_{j=1}^{\infty}\lambda_j^{2\alpha}|(\x,e_j)|^2=\frac{1}{2}\|\A^{\alpha}\x\|_{\H}^2<\infty.
	\end{align*}
	Thus, one can deduce that the function $\bar{\u} \in \C([0,T];\D(\A^{\alpha}))\cap \mathrm{L}^{2}(0,T;\D(\A^{\alpha+\frac{1}{2}}))$ and using the interpolation inequality \eqref{Aestimate}, we further have  $\bar{\u} \in \mathrm{L}^{\frac{4}{1-2\alpha}}(0,T;\D(\A^{\frac{1}{4}+\frac{\alpha}{2}}))$. Now, we consider the solution $\bar{\v}$ of the following system:
	\begin{equation}
	\left\{
	\begin{aligned}\label{vsystem}
	\frac{\d\bar{\v}}{\d t}+\mu\A \bar{\v}  &= -\B(\bar{\u},\bar{\u})+\f- \beta\mathcal{C}(\bar{\u}), \\ 
	\bar{\v}(0)  &=\x.
	\end{aligned} 
	\right.
	\end{equation}
	Given $\x \in \D(\A^{\alpha})$ and $\f \in \D(\A^{-\frac{1}{2}+\alpha}) $, the solution $ \bar{\v} \in \C([0,T];\D(\A^{\alpha}))\cap \mathrm{L}^{2}(0,T;\D(\A^{\alpha+\frac{1}{2}}))$, since $-\B(\bar{\u},\bar{\u})- \beta\mathcal{C}(\bar{\u})\in\mathrm{L}^2(0,T;\D(\A^{-\frac{1}{2}+\alpha})) $. 
	Therefore $\bar{\z} =\bar{\u}-\bar{\v} \in  \C_{\mathbf{0}}([0,T];\D(\A^{\alpha}))\cap \mathrm{L}^{\frac{4}{1-2\alpha}}(0,T;\D(\A^{\frac{1}{4}+\frac{\alpha}{2}}))$.
	
	Next, we prove that	$\Phi(\cdot)$ is continuous in the assigned topologies. 
	Suppose we are given $\z^{(1)}, \z^{(2)} \in \C_{\mathbf{0}}([0,T];\D(\A^{\alpha}))\cap \mathrm{L}^{\frac{4}{1-2\alpha}}(0,T;\D(\A^{\frac{1}{4}+\frac{\alpha}{2}}))$ and $\v^{(1)}, \v^{(2)} \in
	\C([0,T];\D(\A^{\alpha}))\cap \mathrm{L}^{2}(0,T;\D(\A^{\alpha+\frac{1}{2}}))$. Let us denote $\mathrm{V} = \v^{(1)}- \v^{(2)}$ \text{ and } $\Z=\z^{(1)}- \z^{(2)}$, then $\mathrm{V}$ satisfies the following abstract form in the weak sense: 
	\begin{equation}\label{Vsystem}
	\left\{
	\begin{aligned}
	\frac{\d\mathrm{V}}{\d t} +\mu \A \mathrm{V} +\beta (\mathcal{C}(\v^{(1)}+\z^{(1)})-\mathcal{C}(\v^{(2)}+\z^{(2)})) & + \B(\v^{(1)}+\z^{(1)},\v^{(1)}+\z^{(1)})\\&-\B(\v^{(2)}+\z^{(2)},\v^{(2)}+\z^{(2)}) = \mathbf{0}, \\
	& \hspace{4cm}\mathrm{V}(0)=\mathbf{0}.
	\end{aligned}
	\right.
	\end{equation}
	The properties  of bilinear operator $\B(\cdot,\cdot)$ implies \begin{align*}
	&\B(\v^{(1)}+\z^{(1)},\v^{(1)}+\z^{(1)})-\B(\v^{(2)}+\z^{(2)},\v^{(2)}+\z^{(2)}) \nonumber\\&  = \B(\v^{(1)}+\z^{(1)},\mathrm{V}+\Z)+\B(\mathrm{V}+\Z,\v^{(2)}+\z^{(2)}).
	\end{align*}Using this in the first equation of \eqref{Vsystem}, we obtain
	\begin{align*}
	\frac{\d}{\d t}\mathrm{V}+\mu \A \mathrm{V}+\beta (\mathcal{C}(\v^{(1)}+\z^{(1)})-\mathcal{C}(\v^{(2)}+\z^{(2)}))&+\B(\v^{(1)}+\z^{(1)},\mathrm{V}+\Z)\\&+\B(\mathrm{V}+\Z,\v^{(2)}+\z^{(2)})=\mathbf{0},
	\end{align*}
for a.e. $t\in[0,T]$.	Taking the inner product of the above expression with $\A^{2\alpha}\mathrm{V}(\cdot)$, we find
	\begin{align}\label{Step3}\nonumber
	&	\frac{1}{2}\frac{\d}{\d t}\|\A^{\alpha}\mathrm{V}(t)\|_{\H}^2 + \mu\|\A^{\frac{1}{2}+\alpha}\mathrm{V}(t)\|_{\H}^2\nonumber\\&= -\langle \B(\v^{(1)}(t)+\z^{(1)}(t),\mathrm{V}(t)+\Z(t)) + \B(\mathrm{V}(t)+\Z(t),\v^{(2)}(t)+\z^{(2)}(t)) ,\A^{2\alpha}\mathrm{V}(t) \rangle\nonumber \\& \quad -\beta \langle (\mathcal{C}(\v^{(1)}(t)+\z^{(1)}(t))-\mathcal{C}(\v^{(2)}+\z^{(2)}(t))), \A^{2\alpha}\mathrm{V}(t) \rangle,
	\end{align}
	for a.e. $t\in[0,T]$.	Using the Cauchy-Schwarz inequality, Lemma \ref{lem2.2} with  $\theta=\rho=\frac{1}{4}+\frac{\alpha}{2}$ and $\delta= -\alpha+\frac{1}{2}$, interpolation inequality and Young's inequality, we estimate the first term from the right hand side of the equality \eqref{Step3} as 
	\begin{align*}
	&	|\langle \B(\v^{(1)}+\z^{(1)},\mathrm{V}+\Z) + \B(\mathrm{V}+\Z,\v^{(2)}+\z^{(2)}) ,\A^{2\alpha}\mathrm{V} \rangle| \nonumber\\&= |\langle \A^{\alpha-\frac{1}{2}}\{\B(\v^{(1)}+\z^{(1)},\mathrm{V}+\Z)+ \B(\mathrm{V}+\Z,\v^{(2)}+\z^{(2)})\} ,\A^{\alpha+\frac{1}{2}}\mathrm{V} \rangle| \\& \leq C \left[\|\A^{\frac{1}{4}+\frac{\alpha}{2}}(\v^{(1)}+\z^{(1)})\|_{\H}+\|\A^{\frac{1}{4}+\frac{\alpha}{2}}(\v^{(2)}+\z^{(2)})\|_{\H} \right](\|\A^{\frac{1}{4}+\frac{\alpha}{2}}\mathrm{V}\|_{\H}+\|\A^{\frac{1}{4}+\frac{\alpha}{2}}\Z\|_{\H})\|\A^{\frac{1}{2}+\alpha}\mathrm{V}\|_{\H}\\&\leq    C \left[\|\A^{\frac{1}{4}+\frac{\alpha}{2}}(\v^{(1)}+\z^{(1)})\|_{\H}+\|\A^{\frac{1}{4}+\frac{\alpha}{2}}(\v^{(2)}+\z^{(2)})\|_{\H} \right]\nonumber\\&\quad\times\left(\|\A^{\frac{1}{2}+\alpha}\mathrm{V}\|_{\H}^{\frac{1}{2}}\|\mathrm{V}\|_{\H}^{\frac{1}{2}}+\|\A^{\frac{1}{4}+\frac{\alpha}{2}}\Z\|_{\H}\right)\|\A^{\frac{1}{2}+\alpha}\mathrm{V}\|_{\H}\\&  \leq \frac{\mu}{4}\|\A^{\frac{1}{2}+\alpha}\mathrm{V}\|^{2}_{\H}+ C \left[\|\A^{\frac{1}{4}+\frac{\alpha}{2}}(\v^{(1)}+\z^{(1)})\|^{4}_{\H}+\|\A^{\frac{1}{4}+\frac{\alpha}{2}}(\v^{(2)}+\z^{(2)})\|^{4}_{\H}\right]\|\A^{\alpha}\mathrm{V}\|^{2}_{\H}\\&\quad+C\left[\|\A^{\frac{1}{4}+\frac{\alpha}{2}}(\v^{(1)}+\z^{(1)})\|^{2}_{\H}+\|\A^{\frac{1}{4}+\frac{\alpha}{2}}(\v^{(2)}+\z^{(2)})\|^{2}_{\H}\right]\|\A^{\frac{1}{4}+\frac{\alpha}{2}}\Z\|^{2}_{\H} .
	\end{align*}
	For the final term in the right hand side of the equality \eqref{Step3}, we use the Cauchy-Schwarz inequality, Taylor's formula, interpolation inequality \eqref{Interpolation} with $\mu' =  2\alpha, \gamma' = \alpha+\frac{1}{2}, \beta' = \alpha$ (so that $\theta'=2\alpha$), Sobolev's inequality (for $\frac{r-1}{2r}\leq\alpha$) and Young's inequality  to estimate it as
	\begin{align*}
	&	|\langle \mathcal{C}(\v^{(1)}+\z^{(1)})-\mathcal{C}(\v^{(2)}+\z^{(2)}), \A^{2\alpha}\mathrm{V} \rangle| \\&=\left|\left<\int_0^1\mathcal{C}'(\theta(\v^{(1)}+\z^{(1)})+(1-\theta)(\v^{(2)}+\z^{(2)}))(\mathrm{V}+\Z)\d\theta, \A^{2\alpha}\mathrm{V}\right>\right|\\&\leq \int_0^1\|\mathcal{C}'(\theta(\v^{(1)}+\z^{(1)})+(1-\theta)(\v^{(2)}+\z^{(2)}))(\mathrm{V}+\Z)\|_{\H}\|\A^{2\alpha}\mathrm{V}\|_{\H}\d\theta\\&\leq r\sup_{0\leq\theta\leq 1}\|\theta(\v^{(1)}+\z^{(1)})+(1-\theta)(\v^{(2)}+\z^{(2)})\|_{\wi\L^{2r}}^{r-1}\|\mathrm{V}+\Z\|_{\wi\L^{2r}}\|\A^{\alpha+\frac{1}{2}}\mathrm{V}\|_{\H}^{2\alpha}\|\A^{\alpha}\mathrm{V}\|_{\H}^{1-2\alpha}\\&  \leq C (\|\A^{\alpha}(\v^{(1)}+\z^{(1)})\|^{r-1}_{\H}+\|\A^{\alpha}(\v^{(2)}+\z^{(2)})\|^{r-1}_{\H})(\|\A^{\alpha}\mathrm{V}\|_{\H}+\|\A^{\alpha}\Z\|_{\H})\|\A^{\alpha+\frac{1}{2}}\mathrm{V}\|_{\H}^{2\alpha}\|\A^{\alpha}\mathrm{V}\|_{\H}^{1-2\alpha}  \\&
	\leq  \frac{\mu}{4}\|\A^{\alpha+\frac{1}{2}}\mathrm{V}\|^{2}_{\H}+C\left(\|\A^{\alpha}(\v^{(1)}+\z^{(1)})\|^{2(r-1)}_{\H}+\|\A^{\alpha}(\v^{(2)}+\z^{(2)})\|^{2(r-1)}_{\H}\right)\nonumber\\&\quad\times\left(\|\A^{\alpha}\mathrm{V}\|_{\H}^2+\|\A^{\alpha}\mathrm{V}\|_{\H}^{\frac{1-2\alpha}{1-\alpha}}\|\A^{\alpha}\Z\|_{\H}^{\frac{1}{1-\alpha}}\right)\nonumber\\&\leq \frac{\mu}{4}\|\A^{\alpha+\frac{1}{2}}\mathrm{V}\|^{2}_{\H}+C\left(\|\A^{\alpha}(\v^{(1)}+\z^{(1)})\|^{2(r-1)}_{\H}+\|\A^{\alpha}(\v^{(2)}+\z^{(2)})\|^{2(r-1)}_{\H}\right)\left(\|\A^{\alpha}\mathrm{V}\|_{\H}^2+\|\A^{\alpha}\Z\|_{\H}^{2}\right).
	\end{align*} 
	Making use of all the above estimates in \eqref{Step3}, we deduce that 
	\begin{align*}
	&	\frac{\d }{\d t}\|\A^{\alpha}\mathrm{V}(t)\|^{2}_{\H} +\mu \|\A^{\frac{1}{2}+\alpha}\mathrm{V}(t)\|_{\H}^2\\&\leq C\left[\|\A^{\frac{1}{4}+\frac{\alpha}{2}}(\v^{(1)}(t)+\z^{(1)}(t))\|^{4}_{\H}+\|\A^{\frac{1}{4}+\frac{\alpha}{2}}(\v^{(2)}(t)+\z^{(2)}(t))\|^{4}_{\H}\right.\nonumber\\&\quad+\left.\|\A^{\alpha}(\v^{(1)}(t)+\z^{(1)}(t))\|^{2(r-1)}_{\H}+\|\A^{\alpha}(\v^{(2)}(t)+\z^{(2)}(t))\|^{2(r-1)}_{\H}\right]\|\A^{\alpha}\mathrm{V}(t)\|^{2}_{\H}\\& \quad +C\left[\|\A^{\frac{1}{4}+\frac{\alpha}{2}}(\v^{(1)}(t)+\z^{(1)}(t))\|^{2}_{\H}+\|\A^{\frac{1}{4}+\frac{\alpha}{2}}(\v^{(2)}(t)+\z^{(2)}(t))\|^{2}_{\H}\right]\|\A^{\frac{1}{4}+\frac{\alpha}{2}}\Z(t)\|^{2}_{\H}
	\\& \quad +C \left[\|\A^{\alpha}(\v^{(1)}(t)+\z^{(1)}(t))\|^{2(r-1)}_{\H}+\|\A^{\alpha}(\v^{(2)}(t)+\z^{(2)}(t))\|^{2(r-1)}_{\H}\right]\|\A^{\alpha}\Z(t)\|_{\H}^{2},	\end{align*}
	for a.e. $t\in[0,T]$. 
	Making use of Gronwall's inequality, we obtain 
	\begin{align*}
	&	\|\A^{\alpha}\mathrm{V}(t)\|^{2}_{\H} \nonumber\\&\leq\left\{ C\int_0^T\left[\|\A^{\frac{1}{4}+\frac{\alpha}{2}}(\v^{(1)}(t)+\z^{(1)}(t))\|^{2}_{\H}+\|\A^{\frac{1}{4}+\frac{\alpha}{2}}(\v^{(2)}(t)+\z^{(2)}(t))\|^{2}_{\H}\right]\|\A^{\frac{1}{4}+\frac{\alpha}{2}}\Z(t)\|^{2}_{\H}\d t\right.
	\\& \quad \left.+C\int_0^T \left[\|\A^{\alpha}(\v^{(1)}(t)+\z^{(1)}(t))\|^{2(r-1)}_{\H}+\|\A^{\alpha}(\v^{(2)}(t)+\z^{(2)}(t))\|^{2(r-1)}_{\H}\right]\|\A^{\alpha}\Z(t)\|_{\H}^{2}\d t\right\}\nonumber\\&\quad\times\exp\bigg\{C\int_0^T\left[\|\A^{\frac{1}{4}+\frac{\alpha}{2}}(\v^{(1)}(t)+\z^{(1)}(t))\|^{4}_{\H}+\|\A^{\frac{1}{4}+\frac{\alpha}{2}}(\v^{(2)}(t)+\z^{(2)}(t))\|^{4}_{\H}\right.\nonumber\\&\qquad+\left.\|\A^{\alpha}(\v^{(1)}(t)+\z^{(1)}(t))\|^{2(r-1)}_{\H}+\|\A^{\alpha}(\v^{(2)}(t)+\z^{(2)}(t))\|^{2(r-1)}_{\H}\right]\d t\bigg\},
	\end{align*}
	for all $t\in[0,T]$. Using the fact that $\v^{(i)},\z^{(i)}\in \C([0,T];\D(\A^{\alpha}))\cap \mathrm{L}^{\frac{4}{1-2\alpha}}(0,T;\D(\A^{\frac{1}{4}+\frac{\alpha}{2}})),$ for $i=1,2$, we conclude that 
	\begin{align}
	\sup_{t\in[0,T]}	\|\A^{\alpha}\mathrm{V}(t)\|_{\H}\leq C\left(\int_0^T\|\A^{\frac{1}{4}+\frac{\alpha}{2}}\Z(t)\|_{\H}^4\d t\right)^{1/4}+C\sup_{t\in[0,T]}\|\A^{\alpha}\Z(t)\|_{\H}.
	\end{align}
	Furthermore, we have 
	\begin{align*}
	\|\v^{(1)}-\v^{(2)}\|_{\C([0,T];\D(\A^{\alpha}))} \leq C\left( \|\z^{(1)}-\z^{(2)}\|_{\mathrm{L}^{\frac{4}{1-2\alpha}}(0,T;\D(\A^{\frac{1}{4}+\frac{\alpha}{2}}))}+\|\z^{(1)}-\z^{(2)}\|_{\C([0,T];\D(\A^{\alpha}))}\right).
	\end{align*}We know that $\u^{(i)} = \v^{(i)}+\z^{(i)},$  for $ i=1,2$. Thus, it is immediate that 
	\begin{align*}
	\|\u^{(1)}-\u^{(2)}\|_{\C([0,T];\D(\A^{\alpha}))}&
	\leq \|\v^{(1)}-\v^{(2)}\|_{\C([0,T];\D(\A^{\alpha}))}+\|\z^{(1)}-\z^{(2)}\|_{\C([0,T];\D(\A^{\alpha}))}\\& \leq C\left( \|\z^{(1)}-\z^{(2)}\|_{\mathrm{L}^{\frac{4}{1-2\alpha}}(0,T;\D(\A^{\frac{1}{4}+\frac{\alpha}{2}}))}+\|\z^{(1)}-\z^{(2)}\|_{\C([0,T];\D(\A^{\alpha}))}\right),
	\end{align*}
	and hence irreducibility follows. In fact, if $B_{\rho}(\y)$ denotes the ball centered at $\y$ with radius $\rho$ in $\D(\A^{\alpha})$, then $\mathrm{P}(T,\x,B_{\rho}(\y)) = \mathbb{P}\{\|\u(T,\x)-\y\|_{\D(\A^{\alpha})}<\rho\} $ and \begin{align*}
	\|\u(T,\x)-\y\|_{\D(\A^{\alpha})} &= \|\u(T,\x)-\bar{\u}(T,\x)\|_{\D(\A^{\alpha})} \leq \|\u(\cdot,\x)-\bar{\u}(\cdot,\x)\|_{\C([0,T];\D(\A^{\alpha}))} \\&
	\leq C\left( \|\z^{(1)}-\z^{(2)}\|_{\mathrm{L}^{\frac{4}{1-2\alpha}}(0,T;\D(\A^{\frac{1}{4}+\frac{\alpha}{2}}))}+\|\z^{(1)}-\z^{(2)}\|_{\C([0,T];\D(\A^{\alpha}))}\right),
	\end{align*}
and from the above relation, one can complete the proof of irreducibility. 
	\subsection{Strong Feller property}
	In this subsection, our aim is to prove the following result, which is the Storng Feller property. 
	\begin{proposition}\label{prop3.7}
		If $ \|\x-\y\|_{\D(\A^{\alpha})} \to 0,$ then $$ |(\P_{t}\varphi)(\x)-(\P_{t}\varphi)(\y)| \to 0, \ \text{ for arbitrary } \ t>0, \ \varphi \in \mathcal{B}_{b}(\D(\A^{\alpha})).$$ 
	\end{proposition}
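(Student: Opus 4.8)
The plan is to establish the strong Feller property via the Bismut--Elworthy--Li formula applied to the finite dimensional Galerkin approximations of \eqref{2.7}, and then pass to the limit. First I would introduce, for each $n\in\N$, the Galerkin projection $\P_n$ onto $\mathrm{span}\{e_1,\dots,e_n\}$ and the approximating SDE
\begin{equation*}
\left\{
\begin{aligned}
\d\u_n+\{\mu\A\u_n+\P_n\B(\u_n)+\beta\P_n\mathcal{C}(\u_n)\}\d t&=\P_n\f\d t+\P_n\G\d\W(t),\\
\u_n(0)&=\P_n\x,
\end{aligned}
\right.
\end{equation*}
which is a finite dimensional SDE with locally Lipschitz, monotone (outside a ball, by Lemma~\ref{thm2.2}) coefficients, hence has a unique global strong solution $\u_n(t,\x)$. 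The nondegeneracy hypothesis \eqref{1p5} (equivalently \eqref{33}) guarantees that $\P_n\G$ is invertible on $\P_n\H$ with $\|(\P_n\G)^{-1}\|$ controllable, and since $\mathrm{Im}(\G)\subseteq\D(\A^{1/4+\alpha/2+\e'})$ while $\D(\A^{2\alpha})\subseteq\mathrm{Im}(\G)$, the key quantity $\|\A^{\alpha}(\P_n\G)^{-1}\A^{-\alpha}\|$ together with $\|\A^{\alpha}\P_n\G\|$ can be bounded uniformly in $n$ (these are the two sides of the sandwich in \eqref{33}).

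Next, for $\varphi\in\C_b^1(\D(\A^{\alpha}))$ I would write down the Bismut--Elworthy--Li formula for the Galerkin system: for any $t>0$, $\h\in\P_n\H$,
\begin{equation*}
\langle D(\P_t^n\varphi)(\x),\h\rangle=\frac{1}{t}\,\E\left[\varphi(\u_n(t,\x))\int_0^t\big((\P_n\G)^{-1}\,\xi_n^{\h}(s),\d\W(s)\big)\right],
\end{equation*}
where $\xi_n^{\h}(s)=D\u_n(s,\x)\h$ is the first variation process, solving the linearized equation
\begin{equation*}
\frac{\d\xi_n^{\h}}{\d s}+\mu\A\xi_n^{\h}+\P_n\B'(\u_n)\xi_n^{\h}+\beta\P_n\mathcal{C}'(\u_n)\xi_n^{\h}=\mathbf{0},\qquad \xi_n^{\h}(0)=\P_n\h.
\end{equation*}
The crucial analytic step is to obtain, uniformly in $n$, an estimate of the type
\begin{equation*}
\E\left[\int_0^t\|\A^{\alpha}(\P_n\G)^{-1}\xi_n^{\h}(s)\|_{\H}^2\d s\right]\leq C(t,\x)\,\|\A^{\alpha}\h\|_{\H}^2,
\end{equation*}
which by Cauchy--Schwarz and the isometry yields $|\langle D(\P_t^n\varphi)(\x),\h\rangle|\leq \frac{C(t,\x)^{1/2}}{t}\|\varphi\|_{\sup}\|\A^{\alpha}\h\|_{\H}$, i.e. a gradient bound independent of $\|D\varphi\|$. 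To get this I would apply $\A^{2\alpha}$-type energy estimates to the linearized equation exactly as in the proof of Proposition~\ref{prop3.6}: testing with $\A^{2\alpha}\xi_n^{\h}$, using \eqref{2.9} (nonnegativity of $\langle\mathcal{C}'(\u)\v,\v\rangle$) to discard the damping term with a good sign, using Lemma~\ref{lem2.2} with $\theta=\rho=\tfrac14+\tfrac{\alpha}{2}$, $\delta=-\alpha+\tfrac12$ to control the bilinear term, the interpolation inequality \eqref{IP}, Gagliardo--Nirenberg \eqref{2.1} and Young's inequality, then Gronwall; the bound $C(t,\x)$ involves $\exp\{C\int_0^t(\|\A^{1/4+\alpha/2}\u_n\|_{\H}^4+\|\A^{\alpha}\u_n\|_{\H}^{2(r-1)})\,\d s\}$ which is finite and, by the uniform-in-$n$ a priori bounds from Theorem~\ref{main}/Proposition~\ref{prop3.6}, has bounded expectation (or at least is controlled after a stopping-time localization argument).

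Having the uniform gradient estimate, I would pass to the limit $n\to\infty$: the Galerkin solutions $\u_n(t,\x)\to\u(t,\x)$ in $\C([0,T];\D(\A^{\alpha}))$, $\mathbb{P}$-a.s.\ (and in $\mathrm{L}^2$), so $\P_t^n\varphi\to\P_t\varphi$ pointwise for $\varphi\in\C_b$, and the inequality
\begin{equation*}
|(\P_t\varphi)(\x)-(\P_t\varphi)(\y)|\leq \frac{C}{\sqrt{t}}\,\|\varphi\|_{\sup}\,\|\A^{\alpha}(\x-\y)\|_{\H}
\end{equation*}
passes to the limit, first for $\varphi\in\C_b^1$ and then, by a standard monotone-class / bounded-pointwise-convergence argument, for all $\varphi\in\mathcal{B}_b(\D(\A^{\alpha}))$ (approximating a bounded Borel $\varphi$ by bounded Lipschitz functions with uniformly bounded sup-norm and using that $\P_s(\P_{t-s}\varphi)$ is already Feller by Proposition~\ref{prop3.6} so that $\P_{t-s}\varphi\in\C_b$). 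This is precisely the Lipschitz strong Feller statement, and in particular it implies the claimed continuity $|(\P_t\varphi)(\x)-(\P_t\varphi)(\y)|\to 0$ as $\|\x-\y\|_{\D(\A^{\alpha})}\to0$. The main obstacle is the uniform-in-$n$ control of the exponential moment $\E\exp\{C\int_0^t(\|\A^{1/4+\alpha/2}\u_n\|_{\H}^4+\|\A^{\alpha}\u_n\|_{\H}^{2(r-1)})\d s\}$ appearing in the Gronwall factor for the first variation; handling the cubic nonlinearity ($r=3$) here is what forces the restriction $\alpha\in[\tfrac13,\tfrac12)$ and requires a careful localization (stopping times $\tau_N$ on the $\widetilde{\L}^4$- or $\D(\A^{\alpha})$-norm together with the local monotonicity Lemma~\ref{thm2.2}) before removing the cutoff.
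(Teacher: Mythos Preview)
Your overall strategy---Bismut--Elworthy--Li on the Galerkin system, then pass to the limit---is the paper's strategy, but the execution has a genuine gap and differs from the paper in a way that matters.

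The central issue is the random Gronwall factor. You correctly flag that your first-variation estimate produces a factor $\exp\{C\int_0^t(\|\A^{1/4+\alpha/2}\u_n\|_{\H}^4+\dots)\d s\}$ inside the expectation, and you propose to control its expectation. That exponential moment is not available: the paper only establishes (Proposition~\ref{prop4.6}) moments of the form $\E\exp(\mu\lambda_0\int_0^t\|\A^{1/2}\u\|_{\H}^2\d s)$ for small $\lambda_0$, and a quartic-integrand exponential moment is, for Navier--Stokes--type nonlinearities, generally out of reach. Your ``localization'' remark at the end is not a patch---it is in fact the whole mechanism, and the paper makes it explicit from the start. Specifically, the paper multiplies the nonlinearities by a smooth cutoff $\Theta_R(\|\A^\alpha\u_n^R\|_{\H}^2)$ in the Galerkin system \eqref{2.15}; the first-variation estimate then yields a \emph{deterministic} Gronwall constant $C(R,\mu,\beta)$ (see \eqref{FV4}--\eqref{321}) and hence a Lipschitz constant $L(R,t)$ for the cutoff semigroup $\P_{t,n}^R$, uniformly in $n$. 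After $n\to\infty$ this gives Lipschitz Feller for $\P_t^R$ (Lemma~\ref{lem3.8}). The cutoff is then removed not by exponential moments but by proving $\|\P(t,\x,\cdot)-\P^R(t,\x,\cdot)\|_{\mathrm{var}}\to 0$ as $R\to\infty$, uniformly on bounded sets of $\D(\A^\alpha)$ (Lemma~\ref{lem3.10}), since $\u$ and $\u^R$ coincide up to the exit time from the ball of radius $R$. A triangle inequality in total variation finishes the proof. Note that the conclusion is only strong Feller (continuity), not the global Lipschitz bound you write; the Lipschitz estimate holds only for $\P_t^R$.

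Two smaller points. First, your claim that the damping contribution $\langle\mathcal{C}'(\u_n)\xi_n^{\h},\A^{2\alpha}\xi_n^{\h}\rangle$ can be discarded via \eqref{2.9} is incorrect: \eqref{2.9} gives $\langle\mathcal{C}'(\u)\v,\v\rangle\geq 0$, but here the second argument is $\A^{2\alpha}\xi_n^{\h}\neq\xi_n^{\h}$, so no sign is available; the paper instead estimates this term by H\"older/Sobolev/interpolation and absorbs it, which is painless precisely because the cutoff bounds $\|\A^\alpha\u_n^R\|_{\H}$ by $\sqrt{R+1}$. Second, the paper tests the first-variation equation with $\A^{4\alpha-1}\U_n^R$ rather than $\A^{2\alpha}\U_n^R$: this gives directly the dissipation $\|\A^{2\alpha}\U_n^R\|_{\H}^2$ needed for the key inequality $\|(\G_n\G_n^*)^{-1/2}\y\|_{\H}^2\leq C\|\A^{2\alpha}\y\|_{\H}^2$ (their \eqref{El1}), with energy at the weaker level $\|\A^{2\alpha-1/2}\cdot\|_{\H}\leq C\|\A^{\alpha}\cdot\|_{\H}$ since $\alpha<\tfrac12$.
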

	Firstly, we consider a finite dimensional Galerkin approximated system and prove that the associated Markov semigroup $\P_{t}^{R}$  is Lipschitz Feller, then as $R\to \infty$, we will prove that $\P_{t}$ is strong Feller.
	
	Let $\H_{n} =\mathrm{span}\{e_{1},e_{2},\ldots ,e_{n}\}$, where $\{e_1,e_2,\ldots,e_n,\ldots\} $ is the complete orthonormal system of eigenfunctions of the Stokes operator $\A$ and let $\Pi_{n}:\H\to\H_{n}$ be the orthogonal projection operator, that is,  $\Pi_{n}\x = \sum_{j=1}^{n}(\x,e_{j})$. We define 
	\begin{align*}
	\B_{n}(\x) = \Pi_{n}\B(\Pi_{n}\x,\Pi_{n}\x),\quad  \mathcal{C}_{n}(\x)= \Pi_{n}\mathcal{C}(\Pi_{n}\x), \quad \G_{n}=\Pi_{n}\G\Pi_{n}, \quad \f_{n} =\Pi_{n}\f.
	\end{align*}
	For fixed $R>0$ and $n>0$, we consider the following Galerkin approximated system in $\H_{n}$: 
	\begin{equation}\label{2.15}
	\left\{
	\begin{aligned}
	\d \u_{n}^{R}(t)&+\left\{\mu \A \u_{n}^{R} (t)+ \Theta_{R}(\|\A^{\alpha}\u_n^R(t)\|^{2}_{\H})\left[\B_{n}(\u_{n}^{R}(t),\u_{n}^{R}(t))+\beta \mathcal{C}_{n}(\u_{n}^{R}(t))\right]\right\}\d t\\& = \f_{n} \d t +\G_{n}\d \W(t),\\ 
	\u_{n}(0) & =\Pi_{n}\x = \x_{n},
	\end{aligned} 
	\right. 
	\end{equation}
	where for $R \in (0,\infty)$, $\Theta_{R}$ (cut-off function)  is a $\C_{0}^{\infty}$ function such that
	\begin{align*}
	\Theta_R(x)=	\left\{\begin{array}{cl}1&\text{ if }\  x\in[-R,R],\\
	0&\text{ if }\ x\in[-R-1,R+1]^c.\end{array}\right.
	\end{align*}
	If $R=\infty$, then $\Theta_R \equiv 1$ and the above system \eqref{2.15} changes to the finite dimensional version of \eqref{2.7} (see \eqref{5.8} below). 	Note that $\G_{n} \W(t)$ is an $n$-dimensional Wiener process with incremental covariance $\G_{n}\G_{n}^{*}$.
	This finite dimensional system has a unique progressively measurable solution with $\mathbb{P}$-a.s. trajectory $\u_{n}^{R}(\cdot,\x)\in \C([0,T];\H_{n})$ (using the locally Lipschitz properties of the operators $\B(\cdot)$ and $\mathcal{C}(\cdot)$, see \eqref{2p9} and \eqref{214}, and using an energy estimate satisfied by $\u_n^R(\cdot)$ by applying the finite dimensional It\^o formula, see \eqref{4.18} below).  Also, the solution generates a Markov process in $\H_{n}$ with the associated Markov semigroup $\P_{t,n}^{R}$ and is defined as $$\P_{t,n}^{R}\varphi(\x) = \mathbb{E}[\varphi(\u_{n}^{R}(t,\x))],\ \text{ for all }\ \varphi \in \C_{b}(\H_{n}).$$ 
	\begin{lemma}
		With the assumption of Theorem \ref{main} and for any $t>0, \ R>0,$ there exists a constant $L=L(R,T)$ such that 
		\begin{align}\label{3p14}
		|\P_{t,n}^{R}\varphi(\x)-\P_{t,n}^{R}\varphi(\y)| \leq L\|\x-\y\|_{\D(\A^{\alpha})},
		\end{align}
		for all $n\in \mathbb{N}, \x,\y \in \H_{n}$ and all $\varphi \in \C_{b}(\H_{n})$ with $\|\varphi\|_{\sup} \leq 1$.
	\end{lemma}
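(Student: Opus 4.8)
The plan is to derive \eqref{3p14} from a uniform-in-$n$ bound on the gradient of $\P_{t,n}^{R}\varphi$ in the $\D(\A^{\alpha})$-direction, using the Bismut--Elworthy--Li formula for the finite-dimensional, non-degenerate stochastic differential equation \eqref{2.15}, followed by the mean value theorem along the segment joining $\x$ and $\y$ inside $\H_{n}\subset\D(\A^{\alpha})$; since $\varphi$ is merely bounded and measurable, one first proves the bound for $\varphi\in\C_{b}^{1}(\H_{n})$ and then removes the differentiability by approximating $\varphi$ pointwise and boundedly by $\C_{b}^{1}$ functions, the bound being independent of any norm of $D\varphi$. Two ingredients feed into this. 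First, the non-degeneracy part of \eqref{33}, $\D(\A^{2\alpha})\subseteq\mathrm{Im}(\G)$, implies (closed graph theorem) that $\G_{n}$ is invertible on $\H_{n}$ and $\|\G_{n}^{-1}\v\|_{\H}\le\kappa\|\A^{2\alpha}\v\|_{\H}$ for all $\v\in\H_{n}$, with $\kappa$ independent of $n$. Second, applying the finite-dimensional It\^o formula to $\|\A^{\alpha}\u_{n}^{R}(t)\|_{\H}^{2}$ and exploiting that $\Theta_{R}(\|\A^{\alpha}\u_{n}^{R}\|_{\H}^{2})\ne0$ forces $\|\A^{\alpha}\u_{n}^{R}\|_{\H}^{2}\le R+1$, together with Lemma \ref{lem2.2}, the interpolation inequalities \eqref{Interpolation}--\eqref{IP}, Sobolev's inequality \eqref{26}, Young's and the Burkholder--Davis--Gundy inequalities (exactly as in Proposition \ref{prop3.6}; for $r=3$ the range $\alpha\in[\tfrac13,\tfrac12)$ enters), one obtains the $n$-uniform estimates $\E\sup_{t\in[0,T]}\|\A^{\alpha}\u_{n}^{R}(t)\|_{\H}^{2p}+\E\big(\int_{0}^{T}\|\A^{\alpha+\frac12}\u_{n}^{R}(t)\|_{\H}^{2}\d t\big)^{p}\le C(p,R,T,\|\A^{\alpha}\x\|_{\H})$ for every $p\ge1$.

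The heart of the argument is the control of the Jacobian flow. Since the drift of \eqref{2.15} is $\C^{1}$ on $\H_{n}$ for $r=1,2,3$ (by \eqref{29}), the map $\x\mapsto\u_{n}^{R}(t,\x)$ is $\C^{1}$ and $\eta^{\h}(t):=D_{\x}\u_{n}^{R}(t,\x)\h$ solves the linear random equation
\begin{align*}
\frac{\d\eta^{\h}}{\d t}+\mu\A\eta^{\h}+\Theta_{R}\big(\|\A^{\alpha}\u_{n}^{R}\|_{\H}^{2}\big)\big[\B_{n}'(\u_{n}^{R})\eta^{\h}+\beta\mathcal{C}_{n}'(\u_{n}^{R})\eta^{\h}\big]+2\Theta_{R}'\big(\|\A^{\alpha}\u_{n}^{R}\|_{\H}^{2}\big)\langle\A^{\alpha}\u_{n}^{R},\A^{\alpha}\eta^{\h}\rangle\big[\B_{n}(\u_{n}^{R})+\beta\mathcal{C}_{n}(\u_{n}^{R})\big]=\mathbf{0},
\end{align*}
with $\eta^{\h}(0)=\Pi_{n}\h$. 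Taking the inner product with $\A^{2\alpha}\eta^{\h}(\cdot)$, one estimates the bilinear contribution by Lemma \ref{lem2.2} (with $\theta=\rho=\tfrac14+\tfrac\alpha2$, $\delta=\tfrac12-\alpha$) and \eqref{IP}, the damping contribution by \eqref{2.9}, \eqref{Interpolation} and \eqref{26} (on the support of $\Theta_{R}$ one has $\|\A^{\alpha}\u_{n}^{R}\|_{\H}^{2}\le R+1$, and for $r=3$ the bound $\alpha\ge\tfrac13$ makes $\mathcal{C}_{n}'(\u_{n}^{R})$ a bounded operator $\D(\A^{\alpha})\to\H$ there), and the $\Theta_{R}'$-term using the boundedness of $\Theta_{R}'$ and of $\B_{n}(\u_{n}^{R})$, $\mathcal{C}_{n}(\u_{n}^{R})$ on its support; absorbing the highest-order term via $\|\A^{2\alpha}\eta^{\h}\|_{\H}^{2}\le\tfrac{\mu}{2}\|\A^{\alpha+\frac12}\eta^{\h}\|_{\H}^{2}+C\|\A^{\alpha}\eta^{\h}\|_{\H}^{2}$ (valid since $2\alpha<\alpha+\tfrac12$), one arrives at
\begin{align*}
\frac{\d}{\d t}\|\A^{\alpha}\eta^{\h}(t)\|_{\H}^{2}+\mu\|\A^{\alpha+\frac12}\eta^{\h}(t)\|_{\H}^{2}\le\Psi_{R}(t)\,\|\A^{\alpha}\eta^{\h}(t)\|_{\H}^{2},
\end{align*}
where $\int_{0}^{T}\Psi_{R}(t)\d t$ has moments of all orders by the $\u_{n}^{R}$-bounds above. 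Gronwall's inequality then gives $\sup_{t\in[0,T]}\|\A^{\alpha}\eta^{\h}(t)\|_{\H}^{2}+\mu\int_{0}^{T}\|\A^{\alpha+\frac12}\eta^{\h}(t)\|_{\H}^{2}\d t\le\Lambda_{R}(T)\|\A^{\alpha}\h\|_{\H}^{2}$ with $\E\Lambda_{R}(T)^{p}<\infty$ for all $p$, and interpolating $\A^{2\alpha}$ between $\A^{\alpha}$ and $\A^{\alpha+\frac12}$ followed by H\"older's inequality yields $\E\int_{0}^{t}\|\A^{2\alpha}\eta^{\h}(s)\|_{\H}^{2}\d s\le C(R,T)\|\A^{\alpha}\h\|_{\H}^{2}$ for $t\in(0,T]$, all constants independent of $n$.

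Finally, the Bismut--Elworthy--Li formula (\cite{BEL}) applied to \eqref{2.15}, whose diffusion $\G_{n}$ is invertible on $\H_{n}$, gives, for $\varphi\in\C_{b}^{1}(\H_{n})$ and $\h\in\H_{n}$,
\begin{align*}
\langle D_{\x}(\P_{t,n}^{R}\varphi)(\x),\h\rangle=\frac1t\,\E\Big[\varphi(\u_{n}^{R}(t,\x))\int_{0}^{t}\big\langle\G_{n}^{-1}\eta^{\h}(s),\d\W(s)\big\rangle\Big],
\end{align*}
whence, by the Cauchy--Schwarz inequality, It\^o's isometry and the two preceding bounds,
\begin{align*}
\big|\langle D_{\x}(\P_{t,n}^{R}\varphi)(\x),\h\rangle\big|\le\frac{\|\varphi\|_{\sup}}{t}\Big(\E\int_{0}^{t}\|\G_{n}^{-1}\eta^{\h}(s)\|_{\H}^{2}\d s\Big)^{1/2}\le\frac{\kappa}{t}\,C(R,T)^{1/2}\,\|\varphi\|_{\sup}\,\|\A^{\alpha}\h\|_{\H}.
\end{align*}
After approximating a general $\varphi\in\C_{b}(\H_{n})$ with $\|\varphi\|_{\sup}\le1$ by $\C_{b}^{1}$ functions and applying the mean value theorem along $\{(1-s)\x+s\y:s\in[0,1]\}\subset\H_{n}$, estimate \eqref{3p14} follows with $L=\kappa\,t^{-1}C(R,T)^{1/2}$, independent of $n$.

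The main obstacle is the Jacobian estimate of the second paragraph: one must propagate $\D(\A^{\alpha})$-regularity of $\eta^{\h}$ with a constant possessing all polynomial moments and, crucially, \emph{linear} dependence on $\|\A^{\alpha}\h\|_{\H}$, while simultaneously handling the term created by differentiating the cut-off $\Theta_{R}$ and, for $r=3$, the cubic term $\mathcal{C}_{n}'(\u_{n}^{R})\eta^{\h}=\mathcal{P}(|\u_{n}^{R}|^{2}\eta^{\h})+2\mathcal{P}(\u_{n}^{R}(\u_{n}^{R}\cdot\eta^{\h}))$, where the constraint $\alpha\ge\tfrac13$ is essential; moreover the $n$-independence of every constant --- indispensable for the passage $n\to\infty$ carried out later in the proof of Proposition \ref{prop3.7} --- must be verified at each step, and the validity of the Bismut--Elworthy--Li identity has to be extended from $\C_{b}^{1}$ to bounded measurable $\varphi$.
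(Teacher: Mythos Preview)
Your overall strategy (Bismut--Elworthy--Li, Jacobian flow estimate, mean value theorem) is exactly that of the paper, but the Jacobian estimate contains a genuine gap that the paper avoids by a different choice of test function.

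You test the first-variation equation against $\A^{2\alpha}\eta^{\h}$, obtaining a differential inequality for $\|\A^{\alpha}\eta^{\h}\|_{\H}^{2}$ with a \emph{random} coefficient $\Psi_{R}(t)$. Indeed, the term $\Theta_{R}\langle\B_{n}(\eta^{\h},\u_{n}^{R}),\A^{2\alpha}\eta^{\h}\rangle=\langle\A^{\alpha-\frac12}\B_{n}(\eta^{\h},\u_{n}^{R}),\A^{\alpha+\frac12}\eta^{\h}\rangle$ forces, via Lemma~\ref{lem2.2} with $\delta=\tfrac12-\alpha$, the condition $\delta+\rho>\tfrac12$, i.e.\ $\rho>\alpha$; hence $\u_{n}^{R}$ necessarily enters through a norm strictly above $\|\A^{\alpha}\u_{n}^{R}\|_{\H}$, which is the only one controlled deterministically by the cut-off. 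The same happens for the $\Theta_{R}'$-term, where your claim that ``$\B_{n}(\u_{n}^{R})$ is bounded on the support'' is only true in $\D(\A^{2\alpha-1})$, not in $\H$. You then assert that $\int_{0}^{T}\Psi_{R}\,\d t$ having all polynomial moments yields $\E\Lambda_{R}(T)^{p}<\infty$ for $\Lambda_{R}(T)=\exp\!\big(\int_{0}^{T}\Psi_{R}\big)$; this is false in general --- you would need \emph{exponential} moments of $\int_{0}^{T}\Psi_{R}$, which you have not established and which are not immediate here.

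The paper sidesteps this entirely by testing with $\A^{4\alpha-1}\U_{n}^{R}$ instead of $\A^{2\alpha}\U_{n}^{R}$. Then the dissipation produces $\mu\|\A^{2\alpha}\U_{n}^{R}\|_{\H}^{2}$, and every nonlinear term is handled through Lemma~\ref{lem2.2} with $\delta=1-2\alpha$, $\theta=\rho=\alpha$, which gives $\|\A^{2\alpha-1}\B_{n}(\cdot,\cdot)\|_{\H}\le C\|\A^{\alpha}\cdot\|_{\H}\|\A^{\alpha}\cdot\|_{\H}$; the $\mathcal{C}_{n}$-terms are treated analogously via \eqref{26}. Consequently $\u_{n}^{R}$ appears \emph{only} through $\|\A^{\alpha}\u_{n}^{R}\|_{\H}\le\sqrt{R+1}$ on $\mathrm{supp}\,\Theta_{R}\cup\mathrm{supp}\,\Theta_{R}'$, the Gronwall coefficient is a \emph{deterministic} constant $C(R,\mu,\beta)$, and one obtains $\int_{0}^{t}\|\A^{2\alpha}\U_{n}^{R}(s)\|_{\H}^{2}\d s\le C(R,t)\|\A^{2\alpha-\frac12}\h\|_{\H}^{2}\le C(R,t)\|\A^{\alpha}\h\|_{\H}^{2}$ pathwise, with no probabilistic input at all. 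If you want to keep your route, you must either produce exponential moments for $\int_{0}^{T}\Psi_{R}$ or, more simply, shift the test function down to $\A^{4\alpha-1}\eta^{\h}$ as the paper does.
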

	\begin{proof}
		Since	Feller property holds, we know that $\P_{t,n}^{R}:\C_{b}(\H_{n})\to \C_{b}(\H_{n})$. Moreover, applying the Mean Value Theorem, we find  
		\begin{align}\label{315}
		|\P_{t,n}^{R}\varphi(\x)-\P_{t,n}^{R}\varphi(\y)| \leq \sup_{\k,\h\in \H_{n},\|\A^{\alpha}\h\|_{\H}\leq 1}|\D\P_{t,n}^{R}\varphi(\k)\cdot \h |\|\x-\y\|_{\D(\A^{\alpha})},
		\end{align}
		where $\D\P_{t,n}^{R}\varphi(\k)\cdot \h$ denotes the derivative of the mapping $\y\mapsto \P_{t,n}^{R}\varphi(\y)$ at the point $\k$ in the direction $\h$. The well-known Bismut-Elworthy-Li formula (\cite{GDP,BEL}) provides
		\begin{align*}
		[\D\P_{t,n}^{R}\varphi(\x) ]\cdot \h = \frac{1}{t}\mathbb{E} \left[\varphi(\u_{n}^{R}(t,\x))\int_{0}^{t} ( (\G_{n}\G_{n}^{*})^{-\frac{1}{2}}[\D\u_{n}^{R}(s,\x)]\cdot \h, \d \beta_{n}(s) ) \right],
		\end{align*}
		for all $\h\in \H_{n}$, where $\beta_{n}$ is a $n$-dimensional standard Wiener process. Then, an application of H\"older's inequality and It\^o's isometry yields 
		\begin{align}\label{314}
		|\D\P_{t,n}^{R}\varphi(\x) \cdot \h| &\leq \frac{1}{t}\|\varphi\|_{\sup}\left[\mathbb{E}\left|\int_{0}^{t}( (\G_{n}\G_{n}^{*})^{-\frac{1}{2}}\D\u_{n}^{R}(s,\x)\cdot \h, \d \beta_{n}(s) )  \right|^{2} \right]^\frac{1}{2}\nonumber\\&\leq 	\frac{1}{t}\|\varphi\|_{\sup}\left[\mathbb{E}\int_{0}^{t}\|(\G_{n}\G_{n}^{*})^{-\frac{1}{2}}\D\u_{n}^{R}(s,\x)\cdot \h\|^{2}_{\H}\d s\right]^\frac{1}{2}.
		\end{align}
		Our next aim is to prove that  
		\begin{align}\label{El1}
		\|(\G_{n}\G_{n}^{*})^{-\frac{1}{2}}{\y}\|_{\H}^{2} \leq C\|\A^{2\alpha}\y\|^{2}_{\H},\ \text{ for all }\ \y \in \H_{n}
		\end{align}
		and 
		\begin{align}\label{El2}
		\mathbb{E}\left[\int_{0}^{t}\|\A^{2\alpha}\D\u_{n}^{R}(s,\x)\cdot \h\|^{2}_{\H}\d s\right] \leq C\|\A^{\alpha}\h\|_{\H}^{2},
		\end{align}
		where the constant $C=C(R,t)$ is independent of $n\in \mathbb{N}$.
		Therefore using \eqref{El1} and \eqref{El2} in \eqref{314}, one obtains 
		\begin{align*}
		\sup_{\k,\h\in \H_{n},\|\A^{\alpha}\h\|_{\H}<1}|\D\P_{t,n}^{R}\varphi(\k)\cdot \h| \leq \frac{C(R,t)}{t}\|\varphi\|_{\sup},
		\end{align*}
		and hence from \eqref{315}, one can easily deduce  \eqref{3p14}. 
		
		Now it is left to show \eqref{El1} and \eqref{El2}. 	A proof of \eqref{El1} is available in Lemma 4.2, \cite{BF} and thus we omit it here.  
		Let us now prove \eqref{El2}. 
		For $0<R<\infty$, we denote the directional derivative by $\U_{n}^{R}(t)$ at $\x$ in the direction $\h$ of the mapping $\x \mapsto \v_{n}^{R}(t,\x)$, that is,
		\begin{align*}
		\U_{n}^{R}(t) =[\D\v_{n}^{R}(t,\x)]  \cdot \h ,
		\end{align*}
		for given $\x, \h \in \H_{n}$. Note that it is also the derivative of the mapping  $ \x \mapsto \u_{n}^{R}(t,\x) =\v_{n}^{R}(t,\x)+\Pi_{n}\z(t)$. Therefore, $\U_{n}^{R}(\cdot)$ is the solution of the first variation equation associated with the system  \eqref{2.15} and is given by 
		\begin{align*}
	&	\frac{\d}{\d t}\U_{n}^{R}+\mu \A \U_{n}^{R}+2 \Theta'_{R}(\|\A^{\alpha}\u_{n}^{R}\|^{2}_{\H})\langle \A^{\alpha}\u_{n}^{R},\A^{\alpha}\U_{n}^{R} \rangle \B_{n}(\u_{n}^{R},\u_{n}^{R})\\&\quad +\Theta_{R}(\|\A^{\alpha}\u_{n}^{R}\|^{2}_{\H})\big[\B_{n}(\u_{n}^{R},\U_{n}^{R})+\B_{n}(\U_{n}^{R},\u_{n}^{R})\big]+2\beta \Theta'_{R}(\|\A^{\alpha}\u_{n}^{R}\|^{2}_{\H})\langle \A^{\alpha}\u_{n}^{R},\A^{\alpha}\U_{n}^{R} \rangle  \mathcal{C}_{n}(\u_{n}^{R})\\&\quad+\beta \Theta_{R}(\|\A^{\alpha}\u_{n}^{R}\|^{2}_{\H})\mathcal{C}'_n(\u_n^R)\U_n^R=\mathbf{ 0},
		\end{align*}
		with the initial condition $\U_{n}^{R}(0)= \Pi_{n}\h$, where $ \h \in \D(\A^{\alpha})$ and $\u_{n}^{R}(\cdot)$ is the solution of \eqref{2.15}, which can be characterized  as in Proposition \ref{prop3.6}. Taking the inner product with $\A^{4\alpha-1}\U_{n}^{R}$, we obtain 
		\begin{align}\label{Lemma2.11} \nonumber
		&\frac{1}{2}\frac{\d }{\d t }\|\A^{2\alpha-\frac{1}{2}}\U_{n}^{R}\|_{\H}^2 +\mu\|\A^{2\alpha}\U_{n}^{R}\|_{\H}^2\nonumber\\& =- 2\Theta'_{R}(\|\A^{\alpha}\u_{n}^{R}\|^{2}_{\H})\langle \A^{\alpha}\u_{n}^{R},\A^{\alpha}\U_{n}^{R} \rangle  \langle \B_{n}(\u_{n}^{R},\u_{n}^{R}),\A^{4\alpha-1}\U_{n}^{R}\rangle \nonumber\\&\quad-\Theta_{R}(\|\A^{\alpha}\U_{n}^{R}\|^{2}_{\H})  \B_{n}(\u_{n}^{R},\U_{n}^{R})+\B_{n}(\U_{n}^{R},\u_{n}^{R}), \A^{4\alpha-1}\U_{n}^{R}\rangle \nonumber\\&\quad-2 \beta \Theta'_{R}(\|\A^{\alpha}\u_{n}^{R}\|^{2}_{\H})\langle \A^{\alpha}\u_{n}^{R},\A^{\alpha}\U_{n}^{R} \rangle \langle \mathcal{C}_{n}(\u_{n}^{R}), \A^{4\alpha-1}\U_{n}^{R}\rangle\nonumber\\&\quad-\beta\Theta_{R}(\|\A^{\alpha}\u_{n}^{R}\|^{2}_{\H})\langle\mathcal{C}'_n(\u_n^R)\U_n^R, \A^{4\alpha-1}\U_{n}^{R}\rangle.
		\end{align}
		Using the Cauchy-Schwarz inequality, Lemma \ref{lem2.2} with  $\theta=\rho=\alpha$ and $\delta= 1-2\alpha$, the interpolation inequality \eqref{Interpolation} with $ \mu' = \alpha, \gamma'=2\alpha-\frac{1}{2}, \beta' = 2\alpha$ (so that $\theta'=2\alpha$) and Young's inequality with $p=\frac{1}{\alpha}, q = \frac{1}{1-\alpha}$, we estimate the first term from the right hand side of the inequality \eqref{Lemma2.11} as
		\begin{align*}
		&	2|\Theta'_{R}(\|\A^{\alpha}\u_{n}^{R}\|^{2}_{\H})\langle \A^{\alpha}\u_{n}^{R},\A^{\alpha}\U_{n}^{R} \rangle\langle \B_{n}(\u_{n}^{R},\u_{n}^{R}),\A^{4\alpha-1}\U_{n}^{R}\rangle|\nonumber\\& \leq  |\Theta'_{R}(\|\A^{\alpha}\u_{n}^{R}\|^{2}_{\H})| \|\A^{\alpha}\u_{n}^{R}\|_{\H}\|\A^{\alpha}\U_{n}^{R}\|_{\H}|\langle \A^{2\alpha-1}\B_{n}(\u_{n}^{R},\u_{n}^{R}),\A^{2\alpha}\U_{n}^{R}\rangle| \\&\leq |\Theta'_{R}(\|\A^{\alpha}\u_{n}^{R}\|^{2}_{\H})| \|\A^{\alpha}\u_{n}^{R}\|^{3}_{\H}\|\A^{\alpha}\U_{n}^{R}\|_{\H}\|\A^{2\alpha}\U_{n}^{R}\|_{\H}\\& \leq C(R)\|\A^{2\alpha-\frac{1}{2}}\U_{n}^{R}\|^{2\alpha}_{\H}\|\A^{2\alpha}\U_{n}^{R}\|^{2(1-\alpha)}_{\H}  \\& \leq \frac{\mu}{8} \|\A^{2\alpha}\U_{n}^{R}\|^{2}_{\H}+C(R,\mu)\|\A^{2\alpha-\frac{1}{2}}\U_{n}^{R}\|^{2}_{\H}.
		\end{align*} 
		Similarly, one can estimate $|\Theta_{R}(\|\A^{\alpha}\u_{n}^{R}\|^{2}_{\H})\langle \B_{n}(\u_{n}^{R},\U_{n}^{R})+\B_{n}(\U_{n}^{R},\u_{n}^{R}), \A^{4\alpha-1}\U_{n}^{R}\rangle|$ as
		\begin{align*}
		&	|\Theta_{R}(\|\A^{\alpha}\u_{n}^{R}\|^{2}_{\H})\langle \B_{n}(\u_{n}^{R},\U_{n}^{R})+\B_{n}(\U_{n}^{R},\u_{n}^{R}), \A^{4\alpha-1}\U_{n}^{R}\rangle| \\& \leq 	|\Theta_{R}(\|\A^{\alpha}\u_{n}^{R}\|^{2}_{\H})| \|\A^{\alpha}\u_{n}^{R}\|_{\H}
		\|\A^{\alpha}\U_{n}^{R}\|_{\H}\|\A^{2\alpha}\U_{n}^{R}\|_{\H} \\&\leq \frac{\mu}{8}\|\A^{2\alpha}\U_{n}^{R}\|^{2}_{\H}+
		C(R,\mu)\|\A^{2\alpha-\frac{1}{2}}\U_{n}^{R}\|^{2}_{\H}.
		\end{align*}  
		For the term $|2 \beta\Theta'_{R}(\|\A^{\alpha}\u_{n}^{R}\|^{2}_{\H})\langle \A^{\alpha}\u_{n}^{R},\A^{\alpha}\U_{n}^{R} \rangle \langle\mathcal{C}_{n}(\u_{n}^{R}), \A^{4\alpha-1}\U_{n}^{R}\rangle|$, we use  the Cauchy-Schwarz inequality, Sobolev's inequality \eqref{26} with $p=2r$ (so that $ \frac{r-1}{2r}\leq \alpha$), the interpolation inequality \eqref{Interpolation} with $ \mu' = \alpha, \gamma'=2\alpha-\frac{1}{2}, \beta' = 2\alpha$ (so that $\theta'=2\alpha$),  also with $\mu' =  4\alpha-1, \gamma' = 2\alpha-\frac{1}{2}, \beta' = 2\alpha$ (so that $\theta'=2(1-2\alpha)$) and Young's inequality to estimate it as 
		\begin{align*}
	&	|2 \beta \Theta'_{R}(\|\A^{\alpha}\u_{n}^{R}\|^{2}_{\H})\langle \A^{\alpha}\u_{n}^{R},\A^{\alpha}\U_{n}^{R} \rangle  \langle\mathcal{C}_{n}(\u_{n}^{R}), \A^{4\alpha-1}\U_{n}^{R}\rangle|\nonumber\\&\leq 2\beta | \Theta'_{R}(\|\A^{\alpha}\u_{n}^{R}\|^{2}_{\H})|\|\A^{\alpha}\u_{n}^{R}\|_{\H}\|\A^{\alpha}\U_{n}^{R}\|_{\H}\|\u_{n}^{R}\|_{\wi\L^{2r}}^r\|\A^{4\alpha-1}\U_{n}^{R}\|_{\H}\nonumber\\&\leq 2\beta | \Theta'_{R}(\|\A^{\alpha}\u_{n}^{R}\|^{2}_{\H})\|\A^{\alpha}\u_{n}^{R}\|_{\H}^{r+1}\|\A^{2\alpha-\frac{1}{2}}\U_{n}^{R}\|_{\H}^{2\alpha}\|\A^{2+2\alpha}\U_{n}^{R}\|_{\H}^{2\alpha}\nonumber\\&\leq\frac{\mu}{8}\|\A^{2\alpha}\U_{n}^{R}\|^{2}_{\H}+
	C(R,\mu,\beta)\|\A^{2\alpha-\frac{1}{2}}\U_{n}^{R}\|^{2}_{\H}.
		\end{align*}
		For the final term in the right hand side of the equality \eqref{Lemma2.11}, we use the Cauchy-Schwarz inequality, interpolation inequality \eqref{Interpolation} with $\mu' =  4\alpha-1, \gamma' = 2\alpha-\frac{1}{2}, \beta' = 2\alpha$ (so that $\theta'=2(1-2\alpha)$), Gagliardo-Nirenberg's inequality \eqref{2.1} with $p=2r,j=0,m=4\alpha,l=q=2$ (so that $\theta = \frac{r-1}{4r\alpha}$), Young's inequality with $p=\frac{8r\alpha}{r-1-(1-4\alpha)4r\alpha}, q= \frac{8r\alpha}{1-r+4r\alpha(3-4\alpha)}$ and Sobolev's embedding \eqref{26} with $p=2r$ (so that $ \frac{r-1}{2r}\leq \alpha$) to estimate it as 
		\begin{align*}
	&	|\beta\Theta_{R}(\|\A^{\alpha}\u_{n}^{R}\|^{2}_{\H})\langle\mathcal{C}'_n(\u_n^R)\U_n^R, \A^{4\alpha-1}\U_{n}^{R}\rangle| \nonumber\\& \leq r\beta |\Theta_{R}(\|\A^{\alpha}\u_{n}^{R}\|^{2}_{\H})| \|\u_{n}^{R}\|^{r-1}_{\wi\L^{2r}}\|\U_{n}^{R}\|_{\wi\L^{2r}}\|\A^{4\alpha-1}\U_{n}^{R}\|_{\H} \\& \leq  r\beta |\Theta_{R}(\|\A^{\alpha}\u_{n}^{R}\|^{2}_{\H})|\|\A^{\alpha}\u_{n}^{R}\|^{r-1}_{\H}\|\U_{n}^{R}\|_{\wi\L^{2r}}
		\|\A^{2\alpha-\frac{1}{2}}\U_{n}^{R}\|^{2(1-2\alpha)}_{\H}\|\A^{2\alpha}\U_{n}^{R}\|^{4\alpha-1}_{\H} \\& \leq C(R,\beta)\|\U_{n}^{R}\|^{\frac{1-(1-4\alpha)}{4r\alpha}}_{\H}\|\A^{2\alpha}\U_{n}^{R}\|^{\frac{r-1+(4\alpha-1)4r\alpha}{4r\alpha}}_{\H}\|\A^{2\alpha-\frac{1}{2}}\U_{n}^{R}\|^{2(1-2\alpha)}_{\H} \\& \leq C(R,\beta)\|\A^{2\alpha}\U_{n}^{R}\|^{\frac{r-1+(4\alpha-1)4r\alpha}{4r\alpha}}_{\H}\|\A^{2\alpha-\frac{1}{2}}\U_{n}^{R}\|^{\frac{1-(1-4\alpha)r+8r\alpha(1-2\alpha)}{4r\alpha}}_{\H},
		\\&  \leq \frac{\mu}{8} \|\A^{2\alpha}\U_{n}^{R}\|^{2}_{\H}+C(R,\mu,\beta)\|\A^{2\alpha-\frac{1}{2}}\U_{n}^{R}\|^{2}_{\H}.
		\end{align*} 
		Combining the above estimates and substituting it in \eqref{Lemma2.11}, we deduce that
		\begin{align}\label{FV4} 
		&	\frac{\d }{\d t }\|\A^{2\alpha-\frac{1}{2}}\U_{n}^{R}(t)\|^{2}_{\H} +\mu\|\A^{2\alpha}\U_{n}^{R}(t)\|_{\H}^2\leq C(R,\mu,\beta)\|\A^{2\alpha-\frac{1}{2}}\U_{n}^{R}(t)\|^{2}_{\H},
		\end{align}
		for a.e. $t\in[0,T]$. 
		Applying Gronwall's inequality, we obtain 
		\begin{align}\label{321}
		&\|\A^{2\alpha-\frac{1}{2}}\U_{n}^{R}(t)\|^{2}_{\H} \leq \|\A^{2\alpha-\frac{1}{2}}\h\|^{2}_{\H}e^{C(R,\mu,\beta)t},
		\end{align} for all $t\in[0,T]$.  
		Substituting \eqref{321} in \eqref{FV4}	 and then integrating from $0$ to $t$, we further have 
		\begin{align*}
		\int_{0}^{t} \|\A^{2\alpha}\U_{n}^{R}(s)\|^{2}_{\H}\d s \leq C(R,t)\|\A^{2\alpha-\frac{1}{2}}\h\|^{2}_{\H},
		\end{align*} where $C(R,\mu,\beta,t) $ is a constant, which is independent of $n,\z,\x,\h,\f$. Finally by taking the expectation of the inequality, one can conclude \eqref{El2} the proof of \eqref{El2}. 
	\end{proof}

	Finally, taking the limit as $n\to\infty$, we get the following result for the Markov semigroup $\P_{t}^{R}$. A proof of the following lemma is same as that of Lemma 3.3, \cite{BF} and hence we omit it here. 
	\begin{lemma}\label{lem3.8}
		Under the assumptions of Theorem \ref{main} and for every $t>0$ and $R>0$, there exists a constant $L=L(R,t)$, such that \begin{align*}
		|\P_{t}^{R}\varphi(\x)-\P_{t}^{R}\varphi(\y)| \leq L\|\x-\y\|_{\D(\A^{\alpha})},
		\end{align*} 
		for all $\x,\y \in \D(\A^{\alpha}), \ \varphi \in \C_{b}(\D(\A^{\alpha}))$ with $\|\varphi\|_{\sup}\leq 1$. 
		Moreover, $\P_{t}^{R}\varphi$ is Lipschitz for arbitrary $\varphi \in \mathcal{B}_{b}(\D(\A^{\alpha}))$.
	\end{lemma}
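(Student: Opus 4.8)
The plan is to obtain Lemma~\ref{lem3.8} by letting $n\to\infty$ in the $n$-uniform estimate \eqref{3p14}. First I would record that, for each $\x\in\D(\A^{\alpha})$, the system \eqref{2.15} with the Galerkin-truncated operators $\B_{n},\mathcal{C}_{n},\G_{n},\f_{n}$ replaced by $\B,\mathcal{C},\G,\f$ (the ``$n=\infty$'' cut-off system) has a unique strong solution $\u^{R}(\cdot,\x)$ with $\mathbb{P}$-a.s. paths in $\C([0,T];\D(\A^{\alpha}))\cap\mathrm{L}^{2}(0,T;\D(\A^{\alpha+\frac{1}{2}}))$; this is proved exactly as Theorem~\ref{exis2} and Proposition~\ref{prop3.6}, using the local Lipschitz bounds \eqref{2p9}, \eqref{214}, the monotonicity \eqref{2.23} and Lemma~\ref{thm2.2}, the presence of the cut-off $\Theta_{R}$ only making the a priori bounds uniform in $n$. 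Its transition semigroup is, by definition, $\P_{t}^{R}$. The key step is then to show that
\begin{align*}
\u_{n}^{R}(\cdot,\Pi_{n}\x)\longrightarrow\u^{R}(\cdot,\x)\quad\text{in }\ \C([0,T];\D(\A^{\alpha}))\ \text{ in probability as }\ n\to\infty,
\end{align*}
which follows from the $n$-uniform energy bounds on $\u_{n}^{R}$ in $\mathrm{L}^{2}\big(\Omega;\C([0,T];\D(\A^{\alpha}))\big)\cap\mathrm{L}^{2}\big(\Omega;\mathrm{L}^{2}(0,T;\D(\A^{\alpha+\frac{1}{2}}))\big)$ together with an estimate for the difference $\u_{n}^{R}-\Pi_{n}\u^{R}$, in which --- thanks to the cut-off and the embeddings \eqref{2p4}, \eqref{26} --- the maps $\z\mapsto\Theta_{R}(\|\A^{\alpha}\z\|_{\H}^{2})\B(\z,\z)$ and $\z\mapsto\Theta_{R}(\|\A^{\alpha}\z\|_{\H}^{2})\mathcal{C}(\z)$ are globally Lipschitz from $\D(\A^{\alpha})$ to $\D(\A^{\alpha-\frac{1}{2}})$.

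Granting this, fix $t,R>0$ and $\x,\y\in\D(\A^{\alpha})$. For $\varphi\in\C_{b}(\D(\A^{\alpha}))$ with $\|\varphi\|_{\sup}\le1$, the restriction $\varphi|_{\H_{n}}$ lies in $\C_{b}(\H_{n})$ with sup-norm $\le1$, so \eqref{3p14} gives $|\P_{t,n}^{R}\varphi(\Pi_{n}\x)-\P_{t,n}^{R}\varphi(\Pi_{n}\y)|\le L\|\Pi_{n}(\x-\y)\|_{\D(\A^{\alpha})}\le L\|\x-\y\|_{\D(\A^{\alpha})}$, using that $\Pi_{n}$ commutes with $\A^{\alpha}$ and is an orthogonal projection in $\H$, hence a contraction on $\D(\A^{\alpha})$. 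Since $\u_{n}^{R}(t,\Pi_{n}\x)\to\u^{R}(t,\x)$ in $\D(\A^{\alpha})$ in probability and $\varphi$ is bounded and continuous, bounded convergence yields $\P_{t,n}^{R}\varphi(\Pi_{n}\x)=\mathbb{E}[\varphi(\u_{n}^{R}(t,\Pi_{n}\x))]\to\mathbb{E}[\varphi(\u^{R}(t,\x))]=\P_{t}^{R}\varphi(\x)$, and similarly at $\y$; passing to the limit, $|\P_{t}^{R}\varphi(\x)-\P_{t}^{R}\varphi(\y)|\le L\|\x-\y\|_{\D(\A^{\alpha})}$ whenever $\|\varphi\|_{\sup}\le1$, hence $|\P_{t}^{R}\varphi(\x)-\P_{t}^{R}\varphi(\y)|\le L\|\varphi\|_{\sup}\|\x-\y\|_{\D(\A^{\alpha})}$ for every $\varphi\in\C_{b}(\D(\A^{\alpha}))$.

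To reach arbitrary $\varphi\in\mathcal{B}_{b}(\D(\A^{\alpha}))$, let $\mu^{t}_{\x}$ denote the law of $\u^{R}(t,\x)$ on $\D(\A^{\alpha})$, so $\P_{t}^{R}\varphi(\x)=\int\varphi\,\d\mu^{t}_{\x}$. Taking the supremum of the previous bound over $\varphi\in\C_{b}(\D(\A^{\alpha}))$ with $\|\varphi\|_{\sup}\le1$ and using that $\D(\A^{\alpha})$ is Polish, so that the total variation \eqref{7.1} of a finite signed Borel measure is unchanged when $\mathcal{B}_{b}$ is replaced by $\C_{b}$, we obtain $\|\mu^{t}_{\x}-\mu^{t}_{\y}\|_{\mathrm{var}}\le L\|\x-\y\|_{\D(\A^{\alpha})}$; therefore, for every $\varphi\in\mathcal{B}_{b}(\D(\A^{\alpha}))$,
\begin{align*}
|\P_{t}^{R}\varphi(\x)-\P_{t}^{R}\varphi(\y)|=\Big|\int\varphi\,\d(\mu^{t}_{\x}-\mu^{t}_{\y})\Big|\le\|\varphi\|_{\sup}\,\|\mu^{t}_{\x}-\mu^{t}_{\y}\|_{\mathrm{var}}\le L\,\|\varphi\|_{\sup}\,\|\x-\y\|_{\D(\A^{\alpha})},
\end{align*}
with $L=L(R,t)$, which is the claim. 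The main obstacle is the strong convergence $\u_{n}^{R}\to\u^{R}$ in $\C([0,T];\D(\A^{\alpha}))$: there the uniform energy estimates must be matched with a difference estimate carried out in the $\D(\A^{\alpha})$-regularity class, and the new point relative to the Navier--Stokes treatment of \cite{BF} is the control of the absorption term $\mathcal{C}(\cdot)$ via \eqref{214} and the Sobolev embedding \eqref{26}.
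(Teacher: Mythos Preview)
Your proposal is correct and follows essentially the same route as the paper's (omitted) proof, which is referred to Lemma~3.3 of \cite{BF}: pass the $n$-uniform Lipschitz estimate \eqref{3p14} to the limit via convergence $\u_{n}^{R}\to\u^{R}$, then upgrade from $\C_{b}$ to $\mathcal{B}_{b}$ through the total-variation norm exactly as you do. The only notable difference is in the convergence step: the argument in \cite{BF} proceeds pathwise via Aubin--Lions compactness (the uniform bounds in $\mathrm{L}^{\infty}(0,T;\D(\A^{\alpha}))\cap\mathrm{L}^{2}(0,T;\D(\A^{\alpha+\frac12}))$ together with the time-derivative bound in $\mathrm{L}^{2}(0,T;\D(\A^{\alpha-\frac12}))$ yield strong subsequential convergence in $\mathrm{L}^{2}(0,T;\D(\A^{\alpha}))$, hence for a.e.\ $t$, and path-continuity in $\D(\A^{\alpha})$ extends this to all $t$), whereas you propose a direct difference estimate in probability. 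One small correction: your claim that $\z\mapsto\Theta_{R}(\|\A^{\alpha}\z\|_{\H}^{2})\B(\z,\z)$ is globally Lipschitz from $\D(\A^{\alpha})$ to $\D(\A^{\alpha-\frac12})$ is too strong as written, since Lemma~\ref{lem2.2} with $\delta=\tfrac12-\alpha$ forces $\theta=\rho=\tfrac14+\tfrac{\alpha}{2}>\alpha$; the difference estimate must therefore also invoke the uniformly bounded $\mathrm{L}^{2}(0,T;\D(\A^{\alpha+\frac12}))$ norm via interpolation \eqref{IP}, not merely the $\D(\A^{\alpha})$ norm. With that adjustment your Gronwall argument goes through, and the two approaches are equivalent.
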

	Let us next pass the limit as $R\to\infty$. 
	\begin{lemma}[Lemma 4.4, \cite{BF}]\label{lem3.10}
		Under the assumption of Theorem \ref{main}, for all $t>0$, we have  \begin{align*}
		\lim_{R\to  \infty}\|\P(t,\x,\cdot)-\P^{R}(t,\x,\cdot)\|_{\mathrm{var}}=0,
		\end{align*}
		uniformly with respect to $\x$ in bounded sets of $\D(\A^{\alpha})$.
	\end{lemma}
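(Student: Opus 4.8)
The plan is to exploit that the cut-off function $\Theta_{R}$ in the system \eqref{2.15} defining $\P_{t}^{R}$ equals $1$ as long as $\|\A^{\alpha}\cdot\|_{\H}^{2}\le R$, so that the cut-off and the genuine dynamics \eqref{2.7} coincide up to the first exit time from the ball $\{\|\A^{\alpha}\cdot\|_{\H}^{2}\le R\}$ of $\D(\A^{\alpha})$. Fix $t>0$ and $M>0$, let $\x\in\D(\A^{\alpha})$ with $\|\A^{\alpha}\x\|_{\H}\le M$, and take $R>M^{2}$. Let $\u(\cdot,\x)$ be the solution of \eqref{2.7} provided by Proposition \ref{prop3.6} and $\u^{R}(\cdot,\x)$ the solution of the corresponding cut-off system, and set
\[
\tau_{R}:=\inf\bigl\{s\ge 0:\ \|\A^{\alpha}\u(s,\x)\|_{\H}^{2}>R\bigr\},\qquad
\tau_{R}^{R}:=\inf\bigl\{s\ge 0:\ \|\A^{\alpha}\u^{R}(s,\x)\|_{\H}^{2}>R\bigr\}.
\]
On the stochastic interval $[0,\tau_{R}\wedge\tau_{R}^{R})$ both processes solve the same equation with the same initial datum $\x$ and the same driving noise $\G\,\d\W$, so by the pathwise uniqueness in Proposition \ref{prop3.6} (cf. Theorem \ref{exis2}) they agree $\mathbb{P}$-a.s.; since the $\D(\A^{\alpha})$-paths are continuous, a standard localization argument forces $\tau_{R}=\tau_{R}^{R}$ $\mathbb{P}$-a.s. and $\u(s,\x)=\u^{R}(s,\x)$ for all $s\in[0,\tau_{R})$. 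In particular $\u(t,\x)=\u^{R}(t,\x)$ on $\{\tau_{R}>t\}$.

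Next I would convert the total-variation distance into a hitting probability. For any $\varphi\in\mathcal{B}_{b}(\D(\A^{\alpha}))$ with $\|\varphi\|_{\sup}\le 1$, using the previous step,
\[
|(\P_{t}\varphi)(\x)-(\P_{t}^{R}\varphi)(\x)|=\bigl|\mathbb{E}\bigl[\varphi(\u(t,\x))-\varphi(\u^{R}(t,\x))\bigr]\bigr|\le 2\,\mathbb{P}\{\tau_{R}\le t\},
\]
whence, taking the supremum over such $\varphi$ and recalling the definition \eqref{7.1} of the total variation,
\[
\|\P(t,\x,\cdot)-\P^{R}(t,\x,\cdot)\|_{\mathrm{var}}\le 2\,\mathbb{P}\{\tau_{R}\le t\}=2\,\mathbb{P}\Bigl\{\sup_{s\in[0,t]}\|\A^{\alpha}\u(s,\x)\|_{\H}^{2}\ge R\Bigr\}.
\]

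It then remains to show that the right-hand side tends to $0$ as $R\to\infty$, uniformly over $\x$ with $\|\A^{\alpha}\x\|_{\H}\le M$. By Chebyshev's inequality this follows from a moment bound $\mathbb{E}\bigl[\sup_{s\in[0,t]}\|\A^{\alpha}\u(s,\x)\|_{\H}^{2}\bigr]\le C(t,M)$, which I would obtain by taking expectations in the pathwise a priori estimate of Proposition \ref{prop3.6} (with $\u=\v+\z$): that estimate bounds $\sup_{s\le t}\|\A^{\alpha}\v(s)\|_{\H}^{2}$ by $\|\A^{\alpha}\x\|_{\H}^{2}$, $\|\A^{\alpha-1/2}\f\|_{\H}^{2}$ and a prefactor polynomial in the time-integrals of $\|\A^{1/4+\alpha/2}\z\|_{\H}$ and $\|\z\|_{\wi\L^{2r}}$, times an exponential factor depending only on $\int_{0}^{t}\|\A^{1/2}\v(s)\|_{\H}^{2}\,\d s$ and $\sup_{s\le t}\|\v(s)\|_{\H}^{4\alpha}$. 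Since under assumption \eqref{33} (implied by \eqref{1p5}) the Ornstein--Uhlenbeck process $\z$ solving \eqref{3.1}, which is independent of $\x$, has a centred Gaussian law on $\C_{\mathbf{0}}([0,t];\D(\A^{1/2+\e'}))$, Fernique's theorem supplies all its polynomial and exponential-of-square moments; separating the $\z$-prefactor from the exponential $\v$-factor by Hölder's inequality and using the standard exponential energy estimates for the Navier--Stokes-type balance satisfied by $\v$ (in terms of $\|\x\|_{\H}\le M$ only, the extra damping term $\beta\|\u\|_{\wi\L^{r+1}}^{r+1}\ge 0$ being favourable) then yields the desired $C(t,M)$, and hence $\|\P(t,\x,\cdot)-\P^{R}(t,\x,\cdot)\|_{\mathrm{var}}\le 2R^{-1}C(t,M)\to 0$ uniformly on $\{\|\A^{\alpha}\x\|_{\H}\le M\}$.

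The main obstacle is precisely this last step: upgrading the essentially deterministic Gronwall estimate of Proposition \ref{prop3.6}, whose right-hand side is an exponential of quantities that are only $\mathbb{P}$-a.s.\ finite, into a genuine moment bound that is uniform over bounded sets of initial data; for $r=3$ the interplay between the exponential factor in $\v$ and the polynomial-in-$\z$ forcing terms is the delicate point, handled via Hölder and Fernique-type exponential bounds for $\z$. If only the non-uniform statement were needed it would suffice to note that the $\D(\A^{\alpha})$-paths of $\u(\cdot,\x)$ are $\mathbb{P}$-a.s.\ continuous, so $\tau_{R}\uparrow\infty$ a.s.\ and $\mathbb{P}\{\tau_{R}\le t\}\to 0$ by monotone convergence.
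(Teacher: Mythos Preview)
Your first two steps---coupling $\u$ and $\u^{R}$ up to the exit time $\tau_{R}$ via pathwise uniqueness, and bounding the total variation by $2\,\mathbb{P}\{\tau_{R}\le t\}$---are correct and coincide with the argument in \cite{BF} (the paper omits the proof and simply refers there). The divergence is in the last step, where you take a harder route than necessary and one that does not obviously close.

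The argument in \cite{BF} is purely pathwise. The Gronwall estimate displayed in the proof of Proposition~\ref{prop3.6} bounds $\sup_{s\le T}\|\A^{\alpha}\v(s)\|_{\H}^{2}$ by a quantity that depends on $\omega$ only through $\z(\omega)$ and on the initial datum only through $\|\A^{\alpha}\x\|_{\H}$: the exponent involves $\sup_{s}\|\v(s)\|_{\H}$ and $\int_{0}^{T}\|\A^{1/2}\v\|_{\H}^{2}$, which by the $\H$-level estimate behind Proposition~\ref{prop3.2} again depend only on $\|\x\|_{\H}$ and on norms of $\z$. Since $\z$ is independent of $\x$, there is a single $\mathbb{P}$-a.s.\ finite random variable $C_{M}(\omega)$ with
\[
\sup_{\|\A^{\alpha}\x\|_{\H}\le M}\ \sup_{s\in[0,T]}\|\A^{\alpha}\u(s,\x)\|_{\H}^{2}\le C_{M}(\omega),
\]
and hence $\sup_{\|\A^{\alpha}\x\|_{\H}\le M}\mathbb{P}\{\tau_{R}\le t\}\le \mathbb{P}\{C_{M}>R\}\to 0$. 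No moments are needed; the uniformity in $\x$ is already built into the pathwise estimate. Your closing remark about the non-uniform case via $\tau_{R}\uparrow\infty$ is in fact the whole argument once one notices that the a.s.\ bound is independent of $\x$.

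Your moment route, by contrast, has a genuine gap. The exponential factor in the Gronwall bound of Proposition~\ref{prop3.6} contains $C\bigl(1+\sup_{s}\|\v\|_{\H}^{4\alpha}\bigr)\int_{0}^{T}\|\A^{1/2}\v\|_{\H}^{2}$, and the $\H$-level energy estimate for $\v$ itself carries a Gronwall factor exponential in quartic norms of $\z$; substituting, you are asking for the expectation of a double exponential in Gaussian norms of $\z$, which Fernique's theorem does not supply. Moreover, the constant $C$ in the exponent is fixed by the inequalities of Proposition~\ref{prop3.6} and is not at your disposal, unlike the small $\lambda_{0}$ in Proposition~\ref{prop4.6}; so even the single-exponential estimates you invoke for $\v$ (which in any case depend on $\z$, not on $\|\x\|_{\H}$ alone as you write) do not match what is required. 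The fix is not to sharpen the moment bound but to replace it with the pathwise argument above.
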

	\begin{proof}[Proof of Proposition \ref{prop3.7}]
		By Lemma \ref{lem3.10}, for  given $t>0$ and arbitrary $\epsilon>0$, there exists $R_{\epsilon}>0$ such that 
		\begin{align*}
		\|\P(t,\x,\cdot)-\P^{R}(t,\x,\cdot)\|_{\mathrm{var}}+\|\P(t,\y,\cdot)-\P^{R}(t,\y,\cdot)\|_{\mathrm{var}} <2\epsilon,
		\end{align*}
		for $\x,\y$ given in a ball of $\D(\A^{\alpha})$. For such $R_{\epsilon}$, taking $\y$ close enough to $\x$ in $\D(\A^{\alpha})$, by Lemma \ref{lem3.8}, we get
		\begin{align*}
		\|\P(t,\x,\cdot)-\P(t,\y,\cdot)\|_{\mathrm{var}} &\leq \|\P(t,\x,\cdot)-\P^{R}(t,\x,\cdot)\|_{\mathrm{var}}+\|\P^{R}(t,\x,\cdot)-\P^{R}(t,\y,\cdot)\|_{\mathrm{var}}\\& \quad \quad+\|\P^{R}(t,\y,\cdot)-\P(t,\y,\cdot)\|_{\mathrm{var}} \\&\leq 3\epsilon,
		\end{align*}
		which completes the proof. 
	\end{proof} 
	The Proof of Theorem \ref{main} can be completed by an application of Theorem \ref{thm3.1}. 
	
	\begin{remark}
		From Remark \ref{rem3.7}, it is clear that the uniqueness of invariant measure for the 2D SCBF equations \eqref{2.7} can be obtained for the more general assumption on $\G$ given in \eqref{1p6} for $1<\ell<\frac{3}{2}$ and \eqref{1p7} for $\ell=1$. In the case of periodic domains, the technical difficulty $1\leq\ell<\frac{3}{2}$  on $\ell$ also can be removed. This will be discussed in a separate work.  
	\end{remark}

	\section{Large Deviation Principle}\label{sec4}\setcounter{equation}{0}
	In this section, we prove the LDP with respect to the topology $\tau$ and Donsker-Varadhan LDP of the occupation measure for the 2D SCBF equations \eqref{2.7} with $\x,\f\in\H$. 	In the sequel, $\mathbb{P}_{\x}$ denotes the law on $\C(\R^{+};\H)$ of the Markov process with $\x \in \H$ as initial state.  For any initial measure $\nu $ on $\H$, let us define $\mathbb{P}_{\nu}(\cdot)= \int_{\H} \mathbb{P}_{\x}\nu(\d \x)$. 
	Our aim is to establish the LDP for the occupation measure $L_{t}$ of the solution $\u(\cdot)$ to the system \eqref{2.7}, which is given by 
	$$	L_{t}(A) := \frac{1}{t} \int_{0}^{t} \delta_{\u (s)}(A) \d s , \ \text{for all }\ A \in \mathcal{B}(\H),$$
	where $\delta_{a}$ is Dirac measure at $a$, and $\mathcal{B}(\H)$ is the Borelian $\sigma$-field in $\H$. We denote $\{\u_t^{\x}\}_{t\geq 0}$ for the solution of the 2D SCBF equations \eqref{2.7} with the initial condition $\x$, defined on $(\Omega,\mathscr{F},\{\mathscr{F}_t\}_{t\geq 0},\mathbb{P})$. 	Let us first state our main result of this section. 
	\begin{theorem}\label{main-t}
		Let $\f \in \H$ and let $ \frac{1}{4} <\alpha < \frac{1}{2} $ for $r=2,3$ and $\frac{1}{3} <\alpha < \frac{1}{2}$ for $r=3$ be a fixed number such that \eqref{1p5} holds. Let $  0<\lambda_{0} < \frac{ \mu \lambda_{1}}{2\|\Q\|_{\mathcal{L}(\H)}} ,$ where $\|\Q\|_{\mathcal{L}(\H)} $ is the norm of $\Q :=\G\G^{*}$ as an operator in $\H$ and 
		\begin{align}\label{4.1} \mathcal{M}_{ \lambda_{0},R}:= \left\{\nu \in \M_{1}(\H) : \int_{\H} e^ {\lambda_{0}\|\x\|^{2}_{\H}} \nu(\d\x) \leq R \right\} . 
		\end{align}  
		The family $ \mathbb{P}_{\nu}(L_{T} \in \cdot) \text{ as } T \rightarrow +\infty  $ satisfies the Donsker-Varadhan LDP with respect to the topology $\tau$, with speed $T$ and rate funtion $\J$ uniformly for any initial measure $\nu$ in $\mathcal{M}_{ \lambda_{0},R}, \text{ where } R >1$ is any fixed number. Here the rate function $ \J:\M_{1}(\H) \rightarrow [0,+\infty] $ is the level-2 entropy of Donsker-Varadhan. Moreover, we have 
		\begin{itemize}
			\item[(i)] $\J$ is a good rate function on $\M_{1}(\H)$.
			\item[(ii)] For all open sets $\G \text{ in } \M_{1}(\H)$ with respect to the topology $\tau$,
			$$  \liminf_{T\to\infty} \frac{1}{T} \log \inf_{\nu \in \mathcal{M}_{ \lambda_{0},R}}\mathbb{P}_{\nu}(L_{T}\in \G) \geq -\inf_{\G} \J.$$ 
			\item[(iii)] For all closed sets $\F \text{ in } \M_{1}(\H)$ with respect to the topology $\tau$,
			$$  \limsup_{T\to\infty} \frac{1}{T} \log \sup_{\nu \in \mathcal{M}_{ \lambda_{0},R}}\mathbb{P}_{\nu}(L_{T}\in \F) \leq -\inf_{\F} \J.$$ 
		\end{itemize}
		Furthermore, we have for the invariant measure $\nu$, and for all $ \varrho \in \M_{1}(\H)$,
		\begin{align}\label{Claim}
		\J(\nu) < +\infty \Rightarrow \nu\ll \varrho, \quad \nu(\V) =1\  \text{ and }\  \int_{\V} \|\A^{\frac{1}{2}}\x\|_{\H}^{2}\d\nu < +\infty. \end{align} 
	\end{theorem}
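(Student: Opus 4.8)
The plan is to derive Theorem \ref{main-t} from the hyper‑exponential recurrence criterion of Wu \cite{LW1}, whose three hypotheses are: (a) the transition semigroup is strong Feller after some positive time; (b) it is topologically irreducible; (c) the process returns hyper‑exponentially fast to a suitable compact set. Theorem \ref{main} already supplies (a) and (b) on the state space $\mathrm{E}=\D(\A^{\alpha})$, so the core of the argument is (c). Accordingly, I would first establish the Donsker--Varadhan $\tau$-LDP for $\mathbb{P}_{\nu}(L_{T}\in\cdot)$ on $\M_{1}(\D(\A^{\alpha}))$ (this is Lemma \ref{lem4.11}), uniformly over initial data in compact subsets of $\D(\A^{\alpha})$, and then transport it to $\M_{1}(\H)$ and upgrade the uniformity to the level set $\mathcal{M}_{\lambda_{0},R}$ defined in \eqref{4.1}.

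The starting point is the exponential estimate of Proposition \ref{prop4.6}. Applying the It\^o formula of part (iii) of the definition of a strong solution to $\|\u(t)\|_{\H}^{2}$, and using $\langle\B(\u),\u\rangle=0$, $\langle\mathcal{C}(\u),\u\rangle=\|\u\|_{\wi\L^{r+1}}^{r+1}\geq0$, Poincar\'e's inequality $\|\u\|_{\V}^{2}\geq\lambda_{1}\|\u\|_{\H}^{2}$ and Young's inequality on $(\f,\u)$ (here $\f\in\H$), one obtains
\begin{align*}
\d\|\u(t)\|_{\H}^{2}+\tfrac{3\mu}{2}\|\u(t)\|_{\V}^{2}\d t\leq\Big(\tfrac{2}{\mu\lambda_{1}}\|\f\|_{\H}^{2}+\Tr(\Q)\Big)\d t+2(\G\d\W(t),\u(t)).
\end{align*}
Applying It\^o's formula to $e^{\lambda_{0}\|\u(t)\|_{\H}^{2}}$, the quadratic variation of the stochastic integral produces the correction $2\lambda_{0}^{2}\langle\Q\u,\u\rangle\leq\tfrac{2\lambda_{0}^{2}\|\Q\|_{\mathcal{L}(\H)}}{\lambda_{1}}\|\u\|_{\V}^{2}$, which is dominated by the dissipation precisely when $\lambda_{0}<\tfrac{\mu\lambda_{1}}{2\|\Q\|_{\mathcal{L}(\H)}}$, leaving a residual $-\tfrac{\mu}{2}\|\u\|_{\V}^{2}$. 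This gives the geometric drift inequality $\mathcal{L}\,e^{\lambda_{0}\|\cdot\|_{\H}^{2}}\leq-c_{0}e^{\lambda_{0}\|\cdot\|_{\H}^{2}}+C_{1}$ for the Markov generator $\mathcal{L}$, hence $\mathbb{E}_{\x}e^{\lambda_{0}\|\u(t)\|_{\H}^{2}}\leq e^{-c_{0}t}e^{\lambda_{0}\|\x\|_{\H}^{2}}+\tfrac{C_{1}}{c_{0}}$, together with $\mathbb{E}_{\x}\exp\{c\int_{0}^{T}\|\u(s)\|_{\V}^{2}\d s\}<\infty$ for a small $c>0$. Combining the latter with the higher-regularity a priori bounds from Section \ref{sec3} (where the nonlinear damping term $\langle\mathcal{C}(\u),\A^{2\alpha}\u\rangle$ is the delicate one, absorbed thanks to $\alpha>\tfrac{r-1}{2r}$ and the noise regularity \eqref{1p5}) controls $\int_{0}^{T}\|\A^{\frac{1}{4}+\frac{\alpha}{2}}\u(s)\|_{\H}^{2}\d s$ exponentially.

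Since $\tfrac14+\tfrac\alpha2>\alpha$ for $\alpha<\tfrac12$, bounded balls of $\D(\A^{\frac14+\frac\alpha2})$ are compact in $\D(\A^{\alpha})$; fix such a ball $K$ of radius $R_{0}$. Using the exponential energy estimates, the process cannot spend more than an exponentially short time with $\|\u\|_{\H}$ (or $\|\u\|_{\V}$) large, and the parabolic smoothing of \eqref{2.7} brings it back into $\D(\A^{\frac14+\frac\alpha2})$; quantifying this gives $\sup_{\x\in B}\mathbb{E}_{\x}e^{\lambda\tau_{K}}<\infty$ for every $\lambda>0$ and every bounded $B\subset\D(\A^{\alpha})$, where $\tau_{K}$ is the (time-delayed) return time to $K$, i.e.\ the hyper-exponential recurrence hypothesis (c). With (a), (b) from Theorem \ref{main} in hand, Wu's theorem \cite{LW1} yields the Donsker--Varadhan $\tau$-LDP for $\mathbb{P}_{\nu}(L_{T}\in\cdot)$ on $\M_{1}(\D(\A^{\alpha}))$ with a good rate function, uniformly over $\nu$ supported in compact subsets of $\D(\A^{\alpha})$; the Lyapunov bound for $e^{\lambda_{0}\|\cdot\|_{\H}^{2}}$ upgrades this to uniformity over $\mathcal{M}_{\lambda_{0},R}$, proving (i)--(iii) at the level of $\M_{1}(\D(\A^{\alpha}))$. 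To pass to $\M_{1}(\H)$, note that the inclusion $\D(\A^{\alpha})\hookrightarrow\H$ induces a $\tau$-continuous injective map $\M_{1}(\D(\A^{\alpha}))\to\M_{1}(\H)$, that by the regularizing effect $\u(t)\in\D(\A^{\alpha})$ for a.e.\ $t>0$ with continuous $\D(\A^{\alpha})$-paths on $[t_{0},T]$, and that $L_{T}$ and $\tfrac{1}{T}\int_{t_{0}}^{T}\delta_{\u(s)}\d s$ differ by $O(t_{0}/T)$; a standard contraction-principle plus exponential-equivalence argument then transfers the LDP to $\M_{1}(\H)$, the two rate functions being identified with the level-2 entropy $\J$.

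For \eqref{Claim}: absolute continuity $\nu\ll\varrho$ with respect to the unique invariant measure follows from strong Feller plus irreducibility, which make all transition kernels mutually equivalent, so that finiteness of the Donsker--Varadhan entropy forces $\nu$ to be compatible with the dynamics. For the integrability, I would use the variational lower bound for $\J$: testing against $u=e^{\theta\|\x\|_{\H}^{2}}$ (made rigorous through the Galerkin approximation, which also handles $u$ being unbounded), one computes $-\tfrac{\mathcal{L}u}{u}\geq\big(\tfrac{3\mu\theta}{2}-\tfrac{2\theta^{2}\|\Q\|_{\mathcal{L}(\H)}}{\lambda_{1}}\big)\|\x\|_{\V}^{2}-C\theta$, which for $\theta$ small has a positive coefficient of $\|\x\|_{\V}^{2}$; since $\int_{\H}(-\tfrac{\mathcal{L}u}{u})\d\nu\leq\J(\nu)<\infty$, this gives $\int_{\H}\|\A^{\frac12}\x\|_{\H}^{2}\d\nu<\infty$, and in particular $\nu(\V)=1$. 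The main obstacle is the hyper-exponential recurrence to a compact subset of $\D(\A^{\alpha})$: one must promote the elementary $\H$-energy recurrence to recurrence in the stronger topology while retaining exponential moments of all orders for the return time, and this is exactly where the nonlinear damping forces the restriction on $\alpha$ and the regularity hypothesis \eqref{1p5}; a secondary difficulty is making the passage $\M_{1}(\D(\A^{\alpha}))\to\M_{1}(\H)$ and the identification of the rate functions fully rigorous in the $\tau$-topology.
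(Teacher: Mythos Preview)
Your overall strategy---exponential energy estimate, Wu's hyper-exponential recurrence criterion on $\D(\A^{\alpha})$, then transfer to $\M_{1}(\H)$---is exactly the paper's. The handling of \eqref{Claim} is also the same in spirit (Donsker--Varadhan variational formula plus strong Feller/irreducibility).

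There is, however, one unnecessary and potentially problematic detour. For the compact set $K$ in the recurrence criterion you take a ball in $\D(\A^{\frac14+\frac\alpha2})$ and claim you need an exponential moment for $\int_{0}^{T}\|\A^{\frac14+\frac\alpha2}\u(s)\|_{\H}^{2}\d s$, to be extracted from the higher-regularity estimates of Section~\ref{sec3}. That step is not needed, and it is doubtful it can be made to work: the $\D(\A^{\alpha})$-energy identity does not close as cleanly as the $\H$-energy (neither $\langle\B(\u),\A^{2\alpha}\u\rangle$ nor $\langle\mathcal{C}(\u),\A^{2\alpha}\u\rangle$ has a sign), and the pathwise Gronwall bounds of Proposition~\ref{prop3.6} produce a factor $\exp\{C\int_{0}^{T}\|\A^{\frac12}\u\|_{\H}^{2}\d t\}$ with a \emph{large} constant $C$, whereas your basic estimate only gives exponential integrability of that quantity for the \emph{small} constant $\mu\lambda_{0}$.

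The paper bypasses this entirely by choosing $K=\{\x:\|\A^{\frac12}\x\|_{\H}\le M\}$, which is already compact in $\D(\A^{\alpha})$ because $\alpha<\frac12$. Then the exponential estimate you have already derived, namely $\mathbb{E}^{\nu}\exp\big(\mu\lambda_{0}\int_{0}^{n}\|\A^{\frac12}\u(s)\|_{\H}^{2}\d s\big)\le e^{nC'}\nu(e^{\lambda_{0}\|\cdot\|_{\H}^{2}})$, suffices: from $L_{n}(K^{c})\le M^{-2}L_{n}(\|\A^{\frac12}\x\|_{\H}^{2})$ and Markov's inequality one gets $\mathbb{P}_{\nu}(\tau^{(1)}_{K}>n)\le\mathbb{P}_{\nu}\big(L_{n}(K^{c})\ge 1-\tfrac1n\big)\le e^{-n\lambda_{0}(\frac{\mu M^{2}}{2}-C')}\nu(e^{\lambda_{0}\|\cdot\|_{\H}^{2}})$, and choosing $M$ large makes the decay rate exceed any prescribed $\lambda$. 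No $\D(\A^{\alpha})$-level exponential moment is ever required. With this correction your argument matches the paper's.
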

	The class \eqref{4.1} of initial distributions for the uniform LDP is sufficiently rich. For example, choosing $R$ large enough, it includes all the Dirac probability measure $\delta_{\x}$ with $\x$ in any ball of $\H$.  The LDP with respect to the topology $\tau$ is stronger than that with respect to the usual weak convergence topology as in Donsker-Varadhan \cite{MDD}. 
	\begin{corollary}\label{cor4.2}
		Let $(\mathbb{B}, \|\cdot\|_{\mathbb{B}})$ be a separable Banach space, and $\psi: \D(\A^{\frac{1}{2}}) \to \mathbb{B}$ be a measurable function, bounded on balls $\{ \x:\|\A^{\frac{1}{2}}\x\|_{\H} \leq R\}$ and satisfying 
		\begin{align}\label{4.2} \lim_{\|\A^{\frac{1}{2}}\x\|_{\H} \to \infty}  \frac{\|\psi(\x)\|_{\mathbb{B}}}{\|\A^{\frac{1}{2}}\x\|^{2}_{\H}} = 0.
		\end{align}
		Then $\mathbb{P}_{\nu}(L_{T}(\psi) \in \cdot)$ satisfies the Donsker-Varadhan LDP on $\mathbb{B}$ with speed $T$ and the rate function $\I_{\psi}$ given by $$ \I_{\psi}(\y) = \inf \{\J(\nu): \J(\nu) < +\infty,\ \nu(\psi) = \y\}, \ \text{ for all }\ \y \in \mathbb{B},$$
		uniformly over initial distributions $\nu \in \mathcal{M}_{\lambda_{0}, R}$ (for any $R >1$).
	\end{corollary}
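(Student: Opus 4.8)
The plan is to invoke the criterion of Wu \cite{LW1} for the Donsker--Varadhan LDP of occupation measures of a strong Feller and topologically irreducible Markov process, first on the regular state space $\mathrm{E}=\D(\A^{\alpha})$ and then to push the result forward to $\M_1(\H)$. The strong Feller property and irreducibility of $\{\P_t\}_{t\ge0}$ on $\D(\A^{\alpha})$ are already in hand (Theorem \ref{main}, Propositions \ref{prop3.6} and \ref{prop3.7}), so the remaining work is: (a) the exponential moment estimates for $\u(\cdot)$ of Proposition \ref{prop4.6}; (b) the hyper-exponential recurrence of $\u(\cdot)$ to a compact subset of $\D(\A^{\alpha})$; (c) the transfer from $\M_1(\D(\A^{\alpha}))$ to $\M_1(\H)$, with uniformity over $\mathcal{M}_{\lambda_0,R}$; and (d) the structural statement \eqref{Claim} and the contraction giving Corollary \ref{cor4.2}. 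For (a), I would apply It\^o's formula to $\|\u(t)\|_{\H}^2$ for \eqref{2.7}, use the Poincar\'e inequality $\|\u\|_{\V}^2\ge\lambda_1\|\u\|_{\H}^2$ and the nonnegativity $\langle\mathcal{C}(\u),\u\rangle=\|\u\|_{\wi\L^{r+1}}^{r+1}\ge0$, and control $2\int_0^t(\G\d\W(s),\u(s))$ via the exponential supermartingale inequality. The hypothesis $0<\lambda_0<\mu\lambda_1/(2\|\Q\|_{\mathcal{L}(\H)})$ is exactly what absorbs the quadratic variation and yields
\begin{align*}
\E_{\x}\exp\left(\lambda_0\|\u(t)\|_{\H}^2+\kappa\int_0^t\|\u(s)\|_{\V}^2\d s\right)\le C(t)\,e^{\lambda_0\|\x\|_{\H}^2},\qquad\x\in\H,
\end{align*}
for some $\kappa>0$ and $C(t)$ growing at most exponentially in $t$. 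Using the regularity of the Ornstein--Uhlenbeck process $\z$ afforded by \eqref{1p5} (so $\z$ has $\D(\A^{\frac14+\frac{\alpha}{2}})$-valued paths) together with the deterministic a priori bounds of Proposition \ref{prop3.6}, the same scheme applied to $\|\A^{\alpha}(\u(t)-\z(t))\|_{\H}^2$ upgrades this to exponential control of $\|\A^{\alpha}\u(t)\|_{\H}^2$ and of $\int_0^t\|\A^{\alpha+\frac12}\u(s)\|_{\H}^2\d s$; the window $\frac{r-1}{2r}<\alpha<\frac12$ in \eqref{1p5} is precisely what lets one absorb the contributions of $\B(\u)$ and $\beta\mathcal{C}(\u)$.

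For (b), the previous bounds furnish a Khasminskii-type drift inequality for the Lyapunov function $x\mapsto\|\A^{\alpha}x\|_{\H}^2$, from which one deduces that the set $K_R=\{x\in\D(\A^{\alpha}):\|\A^{\alpha+\frac12}x\|_{\H}\le R\}$, compact in $\D(\A^{\alpha})$ by Rellich--Kondrachov, is revisited with a return time $\tau_{K_R}$ obeying $\sup_{x\in K_R}\E_x e^{a\tau_{K_R}}<\infty$ provided $R=R(a)$ is large; this is Wu's hyper-exponential recurrence. Applying \cite{LW1} on $\mathrm{E}=\D(\A^{\alpha})$ then gives the Donsker--Varadhan LDP for $\mathbb{P}_{\x}(L_T\in\cdot)$ on $\M_1(\D(\A^{\alpha}))$ with speed $T$ and good rate function the level-$2$ entropy $\J$, uniformly for $\x$ in bounded subsets of $\D(\A^{\alpha})$; this is Lemma \ref{lem4.11}.

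For (c), I would split $L_T=\frac1T\int_0^1\delta_{\u(s)}\d s+\frac{T-1}{T}\widetilde{L}_T$ with $\widetilde{L}_T=\frac1{T-1}\int_1^T\delta_{\u(s)}\d s$: the first summand has total mass $1/T\to0$, hence is exponentially negligible at speed $T$, while, since $\u(s)\in\D(\A^{\alpha})$ for every $s>0$ $\mathbb{P}$-a.s., the time-shifted $\widetilde{L}_T$ is an occupation measure started from $\u(1)$, whose law has the required exponential moment uniformly when the initial law lies in $\mathcal{M}_{\lambda_0,R}$ by the estimate in (a). The canonical injection $\D(\A^{\alpha})\hookrightarrow\H$ induces a map $\M_1(\D(\A^{\alpha}))\to\M_1(\H)$ that is continuous for the respective $\tau$-topologies (a bounded Borel function on $\H$ pulls back to a bounded Borel function on $\D(\A^{\alpha})$), so the contraction principle transports the LDP of Lemma \ref{lem4.11}, and one checks that the contracted rate function equals $\J$ on $\M_1(\H)$ (it is $+\infty$ off $\M_1(\D(\A^{\alpha}))$, consistently with $\V=\D(\A^{\frac12})\subset\D(\A^{\alpha})$). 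Goodness of $\J$ and the uniform bounds (ii)--(iii) over $\mathcal{M}_{\lambda_0,R}$ are inherited, using once more the exponential integrability to dominate $\int\mathbb{P}_{\x}(L_T\in\cdot)\,\nu(\d\x)$. For (d): when $\J(\nu)<\infty$, the Donsker--Varadhan variational formula produces a nonnegative energy associated with $\nu$ and the generator of \eqref{2.7}; testing it on cylindrical functions and combining with the dissipative structure of \eqref{2.7} and the estimate in (a) forces $\nu(\V)=1$, $\int_{\V}\|\A^{\frac12}\x\|_{\H}^2\d\nu<+\infty$, and the absolute continuity, following \cite{MG1}. Finally, Corollary \ref{cor4.2} follows by the contraction principle applied to $\nu\mapsto\nu(\psi)$: on $\{\J<\infty\}$ this map is well defined and finite since $\int\|\A^{\frac12}\x\|_{\H}^2\d\nu<+\infty$ and, by \eqref{4.2}, $\psi$ grows slower than $\|\A^{\frac12}\x\|_{\H}^2$; it is continuous on the (compact) sublevel sets of $\J$, which suffices for the contracted LDP on $\mathbb{B}$ with rate function $\I_{\psi}$, uniformly over $\mathcal{M}_{\lambda_0,R}$.

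The main obstacle I expect is (a)--(b): producing exponential moments, \emph{uniform in the initial datum}, of $\|\A^{\alpha}\u(t)\|_{\H}^2$ and of the dissipation integrals in the presence of $\B(\u)$ and $\beta\mathcal{C}(\u)$, which lack good monotonicity in $\D(\A^{\alpha})$ and whose boundary behaviour is what dictates the regularity window on $\G$ in \eqref{1p5} (and, a posteriori, the restriction $\alpha>\frac{r-1}{2r}$). A secondary delicate point is ensuring the passage $\M_1(\D(\A^{\alpha}))\to\M_1(\H)$ respects both the $\tau$-topology and the uniformity over $\mathcal{M}_{\lambda_0,R}$.
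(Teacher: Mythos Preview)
Your overall architecture matches the paper (Wu's criterion on $\D(\A^{\alpha})$, then transfer to $\M_1(\H)$), but two steps are set up in a way that is harder than necessary and, as stated, has a gap.

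In (a)--(b) you propose to upgrade the exponential moments from the $\H$-level to $\|\A^{\alpha}\u(t)\|_{\H}^2$ and then take the compact $K_R=\{\|\A^{\alpha+\frac12}x\|_{\H}\le R\}$. This is precisely the obstacle you flag, and it is avoidable: the paper (following \cite{MG1}) never produces exponential moments in the $\D(\A^{\alpha})$-norm. Proposition~\ref{prop4.6} only gives exponential control of $\|\u(t)\|_{\H}^2$ and of $\int_0^t\|\A^{1/2}\u(s)\|_{\H}^2\d s$ at the $\H$-level. The compact set used for hyper-exponential recurrence is $K=\{\x:\|\A^{1/2}\x\|_{\H}\le M\}$, which is compact in $\D(\A^{\alpha})$ because $\alpha<\tfrac12$ and $\D(\A^{1/2})\hookrightarrow\D(\A^{\alpha})$ compactly. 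Recurrence to $K$ then follows from $\mathbb{P}_\nu(\tau_K^{(1)}>n)\le\mathbb{P}_\nu(L_n(K^c)\ge 1-\tfrac1n)$ together with Markov's inequality and the exponential bound on $\int_0^n\|\A^{1/2}\u(s)\|_{\H}^2\d s$. So the nonlinear terms never need to be controlled in $\D(\A^{\alpha})$ at the exponential level; your plan to apply It\^o to $\|\A^{\alpha}(\u-\z)\|_{\H}^2$ and then exponentiate would run into a Gronwall factor $\exp(C\int_0^t\|\A^{1/2}\v\|_{\H}^2\d s)$ inside the exponential, which does not close.

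For (d), the direct contraction principle you invoke for $\nu\mapsto\nu(\psi)$ is not quite enough: $\psi$ is only measurable and unbounded, so this map is not $\tau$-continuous on $\M_1(\H)$, and the assertion that it is continuous on sublevel sets of $\J$ needs an argument (uniform integrability of $\psi$ via \eqref{5.5} and \eqref{4.2}). The paper proceeds instead by truncation $\psi_n:=\psi\mathbf{1}_{\{\|\A^{1/2}\x\|_{\H}\le n\}}$, which is bounded measurable so that $\nu\mapsto\nu(\psi_n)$ is $\tau$-continuous; Theorem~\ref{main-t} plus the standard contraction principle gives the LDP for $L_T(\psi_n)$, and then the exponential approximation lemma (Deuschel--Stroock) transfers this to $L_T(\psi)$ using two ingredients: $\sup_{\{\J\le R\}}\|\vartheta(\psi_n)-\vartheta(\psi)\|_{\mathbb{B}}\to0$ (from \eqref{5.5}) and $\lim_n\limsup_T T^{-1}\log\sup_{\nu\in\mathcal{M}_{\lambda_0,R}}\mathbb{P}_\nu(\|L_T(\psi-\psi_n)\|_{\mathbb{B}}>\varpi)=-\infty$ (from the exponential estimate \eqref{5.3}). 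Your sketch implicitly needs exactly these two facts, so it would be cleaner to make the truncation explicit.
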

	One can prove Corollary \ref{cor4.2} in a similar way as in the proof of Corollary 1.2, \cite{MG1}.	As a particular case of Corollary \ref{cor4.2}, by choosing $\psi(\x)=\x$ on $\mathbb{B}=\D(\A^{\frac{1}{2}})$, we have
	\begin{proposition}
		As $ T \to \infty$, the family $$ \mathbb{P}_{\nu}\left(\frac{1}{T}\int_{0}^{T}\u_{t}\d t \in \cdot \right)$$
		satisfies an LDP on $\D(\A^{\frac{1}{2}}) \text{ with speed } T \text{ rate function } \I$ defined by  $$ 
		\I(\z) = \inf \{\J(\nu):\J(\nu) < +\infty,\ \nu(\x) = \z\}, \ \text{for all }\ \z \in \D(\A^{\frac{1}{2}}),$$ 
		uniformly over initial distributions $\nu \in \mathcal{M}_{\lambda_{0},R}$ (for any $R>1$).		
	\end{proposition}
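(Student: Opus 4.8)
The plan is to obtain this statement as the special case of Corollary~\ref{cor4.2} in which $\psi(\x)=\x$ and $\mathbb{B}=\D(\A^{\frac{1}{2}})=\V$, the latter being a separable Banach (in fact Hilbert) space under the norm $\|\A^{\frac{1}{2}}\cdot\|_{\H}$. First I would check that this choice of $\psi$ satisfies the hypotheses of Corollary~\ref{cor4.2}. Measurability is clear; on a ball $\{\x:\|\A^{\frac{1}{2}}\x\|_{\H}\le R\}$ one has $\|\psi(\x)\|_{\mathbb{B}}=\|\A^{\frac{1}{2}}\x\|_{\H}\le R$, so $\psi$ is bounded on balls; and the superquadratic decay condition \eqref{4.2} is immediate, since $\|\psi(\x)\|_{\mathbb{B}}/\|\A^{\frac{1}{2}}\x\|_{\H}^{2}=1/\|\A^{\frac{1}{2}}\x\|_{\H}\to 0$ as $\|\A^{\frac{1}{2}}\x\|_{\H}\to\infty$. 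Hence Corollary~\ref{cor4.2} applies and yields, uniformly over $\nu\in\mathcal{M}_{\lambda_0,R}$ for any fixed $R>1$, the Donsker--Varadhan LDP on $\D(\A^{\frac{1}{2}})$ with speed $T$ for the laws of $L_T(\psi)=\frac{1}{T}\int_0^T\u_t\,\d t$, with rate function $\I_\psi(\z)=\inf\{\J(\nu):\J(\nu)<+\infty,\ \nu(\psi)=\z\}$; since $\psi=\mathrm{id}$ gives $\nu(\psi)=\int_{\H}\x\,\d\nu=\nu(\x)$, this is precisely the claimed $\I$.

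Before invoking the corollary I would make two points precise. First, $\frac{1}{T}\int_0^T\u_t\,\d t$ genuinely defines an element of $\D(\A^{\frac{1}{2}})$: by Theorem~\ref{exis2}, $\u\in\C([0,T];\H)\cap\mathrm{L}^2(0,T;\V)$, $\mathbb{P}$-a.s., so Cauchy--Schwarz gives $\int_0^T\|\A^{\frac{1}{2}}\u_t\|_{\H}\,\d t\le T^{1/2}\big(\int_0^T\|\u_t\|_{\V}^2\,\d t\big)^{1/2}<\infty$ a.s., whence the Bochner integral converges in $\V$ and equals $L_T(\psi)$. Second, the contraction defining $\I$ is meaningful because the functional $\nu\mapsto\nu(\x)=\int_{\H}\x\,\d\nu$ is well defined as an element of $\D(\A^{\frac{1}{2}})$ on the effective domain $\{\J<+\infty\}$: by \eqref{Claim}, $\J(\nu)<+\infty$ forces $\nu(\V)=1$ and $\int_{\V}\|\A^{\frac{1}{2}}\x\|_{\H}^2\,\d\nu<+\infty$, so $\int_{\V}\|\A^{\frac{1}{2}}\x\|_{\H}\,\d\nu<+\infty$ and $\int\x\,\d\nu$ converges in $\V$.

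For completeness I would also recall how Corollary~\ref{cor4.2} is established (following \cite{MG1}, proof of Corollary~1.2), since the present Proposition is a direct instance of it. The ordinary contraction principle cannot be applied verbatim because $\psi$ is unbounded and the $\tau$-topology does not pair with unbounded test functions; one therefore truncates, $\psi_N:=\psi\,\mathbf{1}_{\{\|\A^{\frac{1}{2}}\x\|_{\H}\le N\}}$, so that $\nu\mapsto\nu(\psi_N)$ is bounded and $\tau$-continuous, and Theorem~\ref{main-t} together with the contraction principle gives a uniform LDP for $L_T(\psi_N)$ with rate function $\I_{\psi_N}$. The decisive ingredient is the initial-condition-uniform exponential moment estimate of Proposition~\ref{prop4.6}: for $\lambda>0$ small, $\sup_{\nu\in\mathcal{M}_{\lambda_0,R}}\limsup_{T\to\infty}\frac{1}{T}\log\mathbb{E}_\nu\exp\!\big(\lambda\int_0^T\|\A^{\frac{1}{2}}\u_t\|_{\H}^2\,\d t\big)<\infty$; combined with \eqref{4.2} it shows that $L_T(\psi)-L_T(\psi_N)$ is superexponentially negligible as $N\to\infty$, uniformly in the initial law, so the LDP for $L_T(\psi_N)$ transfers to $L_T(\psi)$ by an inverse-contraction / Dawson--G\"artner argument and $\I_{\psi_N}\nearrow\I_\psi$.

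The main obstacle is exactly this last, initial-condition-uniform superexponential control of the tails $L_T\big(\psi\,\mathbf{1}_{\{\|\A^{\frac{1}{2}}\x\|_{\H}>N\}}\big)$; everything else in the deduction is soft. It rests on the exponential integrability of $\int_0^T\|\A^{\frac{1}{2}}\u_t\|_{\H}^2\,\d t$ uniformly over $\nu\in\mathcal{M}_{\lambda_0,R}$, which for the SCBF equations on a bounded domain is where the nonlinear damping $\beta|\u|^{r-1}\u$ and the regularity hypothesis \eqref{1p5} on $\G$ are used, and which is precisely the content of Proposition~\ref{prop4.6} invoked above.
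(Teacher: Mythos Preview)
Your proposal is correct and follows exactly the paper's approach: the paper states this Proposition as ``a particular case of Corollary~\ref{cor4.2}, by choosing $\psi(\x)=\x$ on $\mathbb{B}=\D(\A^{\frac{1}{2}})$'' with no further argument, and you have supplied precisely this specialization together with the routine verification of the hypotheses of Corollary~\ref{cor4.2}. Your additional remarks on the well-definedness of the Bochner integral and of $\nu(\x)$ on $\{\J<\infty\}$, as well as the sketch of the truncation/approximation argument behind Corollary~\ref{cor4.2}, go beyond what the paper records but are consistent with the intended logic.
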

	
	\subsection{Notations and entropy of Donsker-Varadhan} Let us now recall general results on the LDP for strong Feller, topologically irreducible Markov processes.	We consider an $\mathrm{E}$-valued continuous Markov process, $(\Omega,\{\mathscr{F}_{t}\}_{t\geq 0},\mathscr{F},\{\u_{t}\}_{t\geq 0}, \{\mathbb{P}_{\x}\}_{\x \in \mathrm{E}})$,
	whose semigroup of Markov transition kernel is denoted by $\{\P_{t}(\x,\d \y)\}_{t\geq 0}$, where
	\begin{itemize}
		\item  $\Omega = \C(\R^{+};\mathrm{E})$ is the space of continuous functions from $\R^{+}$ to $\mathrm{E}$ equipped with the compact covergence topology,
		\item  the natural filtration is $\mathscr{F}_{t} = \sigma(\u_{s}; 0 \leq s \leq t)$ for any $t\geq 0$ and $\mathscr{F} = \sigma(\u_{s}; 0 \leq s)$.
	\end{itemize}
	As usual, we denote the law of Markov process with the initial state $\x \in \mathrm{E} $ by $ \mathbb{P}_{\x}$, and for any initial measure $\nu $ on $\mathrm{E}$, we define $\mathbb{P}_{\nu}(\cdot)=\int_{\mathrm{E}}\mathbb{P}_{\x}(\cdot)\nu (\d \x)$. 
	The empirical measure of level-3 is given by $$ R_{t} := \frac{1}{t}\int_{0}^{t} \delta_{\theta_{s}\u} \d s, $$ where $(\theta_{s}\u)_{t}= \u_{s+t},$ for all $t,s \geq 0 $ are the shifts on $\Omega$. Therefore, $R_{t}$ is a random element of $\M_{1}(\Omega)$, the space of probability measures on $\Omega$.	The level-3 entropy functional of Donsker-Varadhan $\mathrm{H}:\M_{1}(\Omega) \to [0,+\infty]$ is defined by 
	\begin{equation}\label{HQ}
	\begin{aligned}
	\mathrm{H}(Q):= \left\{\begin{array}{cl}
	\mathbb{E}^{\bar{Q}}h_{\mathscr{F}^{0}_{1}} (\bar{Q}_{\omega(-\infty,0]};\mathbb{P}_{\omega(0)}), &  \text{ if } Q \in \M^{s}_{1}(\Omega), \\ 
	+\infty , & \text{ otherwise},
	\end{array}\right. \end{aligned}
	\end{equation}
	where \begin{itemize}
		\item $\M^{s}_{1}(\Omega)$ is the subspace of $\M_{1}(\Omega)$, whose element are moreover stationary; 
		\item $\bar{Q}$ is the unique stationary extension of $Q \in \M^{s}_{t}(\Omega)$ to $\bar{\Omega} := \C(\R;\mathrm{E})$;  $\mathscr{F}^{s}_{t} = \sigma \{\u(r); s \leq r \leq t \}, \ \text{for all }\ s,  t \in \R, \ s \leq t$;
		\item $\bar{Q}_{\omega(-\infty,t]}$ is the regular conditional distribution of $\bar{Q}$ knowing $\mathscr{F}^{-\infty}_{t}$;
		\item   $h_{\mathcal{G}}(\nu,\varrho)$ is the usual relative entropy or Kullback information of $\nu $ with respect to $\varrho$ restricted to the $\sigma $ -field $\mathcal{G} $, is given by 
		\begin{equation*}
		\begin{aligned}h_{\mathcal{G}}(\nu;\varrho):=\left\{ \begin{array}{cl}		\int \frac{\d\nu}{\d\varrho}\big|_{\mathcal{G}} \log \big(\frac{\d\nu}{\d\varrho}\big|_{\mathcal{G}}\big)\d\varrho, & \text{ if } \nu \ll \varrho \text{ on } \mathcal{G}, \\
		+\infty, & \text{ otherwise}.
		\end{array}\right. \end{aligned}
		\end{equation*}
	\end{itemize}
	The level-2 entropy functional $\J: \M_{1}(\mathrm{E}) \rightarrow [0,\infty),$ which governs the LDP in our main Theorem \ref{main-t} is 
	\begin{align}\label{Def.J}
	\J(\varrho) = \inf \{\mathrm{H}(Q): Q \in \M^{s}_{1}(\Omega) \text{ and }  Q_{0}= \varrho \}, \quad \varrho \in \M_{1}(\mathrm{E}),\end{align}
	where $ Q_{0}(\cdot) =Q(\u(0) \in \cdot)$ is the marginal law at $t=0$. A proof of the following result is available in Proposition 3.1, \cite{MG1}, and hence we omit it here. 
	\begin{proposition}\label{prop4.4}
		For our system, $\J(\nu) < +\infty\Rightarrow  \nu \ll \varrho$. Moreover, a necessary and sufficient condition for $\J(\nu) = 0$ is $\nu = \varrho$.
	\end{proposition}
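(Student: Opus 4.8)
The plan is to derive both assertions from a single model-specific fact, already established by Theorem~\ref{main} and Theorem~\ref{thm3.1}: the semigroup $\{\P_t\}_{t\geq 0}$ admits a unique invariant measure $\varrho$, and $\varrho$ is equivalent to every transition probability $\P(t,\x,\cdot)$, $t>0$, $\x\in\H$. For a probability measure $\varsigma$ on $\H$, write $\mathbb{P}_\varsigma$ for the law on $\bar\Omega=\C(\R;\H)$ (and its restriction to $\Omega$) of the two-sided Markov process built from the transition semigroup with one-dimensional law $\varsigma$; such $\mathbb{P}_\varsigma$ is shift-invariant if and only if $\varsigma$ is invariant. The only properties of the Kullback information $h_{\mathcal{G}}$ used below are the two elementary ones recalled in its definition: finiteness forces absolute continuity on $\mathcal{G}$, and vanishing forces equality on $\mathcal{G}$.

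For the implication $\J(\nu)<+\infty\Rightarrow\nu\ll\varrho$, I would pick $Q\in\M^{s}_{1}(\Omega)$ with $Q_{0}=\nu$ and $\mathrm{H}(Q)<+\infty$, and let $\bar Q$ be its stationary extension to $\bar\Omega$. By \eqref{HQ},
\begin{align*}
\mathbb{E}^{\bar Q}\,h_{\mathscr{F}^{0}_{1}}(\bar Q_{\omega(-\infty,0]};\mathbb{P}_{\omega(0)})=\mathrm{H}(Q)<+\infty ,
\end{align*}
so $h_{\mathscr{F}^{0}_{1}}(\bar Q_{\omega(-\infty,0]};\mathbb{P}_{\omega(0)})<+\infty$ for $\bar Q$-a.e.\ $\omega$; hence the regular conditional law $\bar Q_{\omega(-\infty,0]}$ is absolutely continuous with respect to $\mathbb{P}_{\omega(0)}$ on $\mathscr{F}^{0}_{1}$, and in particular the $\bar Q$-conditional distribution of $\u(1)$ given $\mathscr{F}^{-\infty}_{0}$ is absolutely continuous with respect to $\P(1,\omega(0),\cdot)$. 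Since $\P(1,\x,\cdot)\ll\varrho$ for every $\x$, integrating over $\omega$ against $\bar Q$ yields $Q_{1}\ll\varrho$, and by stationarity of $\bar Q$, $\nu=Q_{0}=Q_{1}\ll\varrho$.

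For the equivalence $\J(\nu)=0\Leftrightarrow\nu=\varrho$, the ``if'' direction is immediate: taking $Q=\mathbb{P}_\varrho$ (stationary since $\varrho$ is invariant, with $Q_{0}=\varrho$), the Markov property gives $\bar Q_{\omega(-\infty,0]}=\mathbb{P}_{\omega(0)}$ on $\mathscr{F}^{0}_{1}$ for $\mathbb{P}_\varrho$-a.e.\ $\omega$, whence $\mathrm{H}(\mathbb{P}_\varrho)=0$ and, by \eqref{Def.J}, $\J(\varrho)=0$. For the ``only if'' direction, I would use that $\mathrm{H}$ is lower semicontinuous with compact sublevel sets on $\M_{1}(\Omega)$ (the level-$3$ facts recalled from the general Donsker--Varadhan framework, cf.\ \cite{MDD,MG1}) and that $Q\mapsto Q_{0}$ is continuous, so a minimising sequence for \eqref{Def.J} defining $\J(\nu)$ has a cluster point $Q^{\ast}$ with $Q^{\ast}_{0}=\nu$ and $\mathrm{H}(Q^{\ast})=0$. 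Then $\bar Q^{\ast}_{\omega(-\infty,0]}=\mathbb{P}_{\omega(0)}$ on $\mathscr{F}^{0}_{1}$ for $\bar Q^{\ast}$-a.e.\ $\omega$, and by stationarity the same identity holds on each interval $[k,k+1]$; a routine splicing over unit intervals (tower property) identifies $\bar Q^{\ast}$ with the stationary Markov measure of $\{\P_{t}\}$, i.e.\ $Q^{\ast}=\mathbb{P}_\nu$. Stationarity of $\mathbb{P}_\nu$ forces $\nu$ to be an invariant measure, and uniqueness (Theorem~\ref{thm3.1}) gives $\nu=\varrho$.

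The main obstacle is not in the model-specific part, which reduces entirely to the equivalence $\P(t,\x,\cdot)\sim\varrho$ already secured through Propositions~\ref{prop3.6}--\ref{prop3.7} and Theorem~\ref{thm3.1}; rather, it lies in handling the general machinery rigorously, namely the regular conditional distributions on the two-sided space $\bar\Omega$ underlying \eqref{HQ}, and especially the lower semicontinuity and the compactness of sublevel sets of $\mathrm{H}$, which is the point where the strong Feller property genuinely enters. Since a complete treatment of these abstract ingredients is provided in \cite{MG1} (Proposition~3.1) and \cite{MDD}, I would invoke them rather than reproduce the argument, which is precisely why this statement can be recorded without a separate proof.
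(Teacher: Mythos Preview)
Your argument for $\J(\nu)<+\infty\Rightarrow\nu\ll\varrho$ is correct and close in spirit to the paper's (i.e., Gourcy's) approach: both exploit that finite relative entropy forces absolute continuity and that $\P(1,\x,\cdot)\ll\varrho$ by strong Feller plus irreducibility. The paper phrases this through the chain $\nu\ll\nu\P_t\ll\varrho$, while you go directly via $Q_1\ll\varrho$ and stationarity; these are equivalent.

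For the ``only if'' direction of $\J(\nu)=0\Leftrightarrow\nu=\varrho$, however, you take a substantially heavier route than needed, and it contains a questionable step. You rely on inf-compactness of the level-$3$ entropy $\mathrm{H}$ on $\M_1(\Omega)$ to produce a minimiser $Q^\ast$ with $\mathrm{H}(Q^\ast)=0$. But in this paper the inf-compactness of $\J$ (and hence of $\mathrm{H}$) is part of the \emph{conclusion} of Theorem~\ref{thm4.5}, obtained from the hyper-exponential recurrence criterion that is verified only after Proposition~\ref{prop4.4}; invoking it here is at best forward-referencing and at worst circular. Your remark that ``the strong Feller property genuinely enters'' for this compactness is also off: strong Feller and irreducibility give the equivalence of the transition kernels with $\varrho$, not compactness of sublevel sets of $\mathrm{H}$; the latter comes from exponential tightness.

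The paper's (Gourcy's Proposition~3.1) argument bypasses all of this. From the definition~\eqref{HQ} together with stationarity one has, for every $t>0$ and every admissible $Q$,
\begin{align*}
\mathrm{H}(Q)&=\frac{1}{t}\,\mathbb{E}^{\bar Q}\,h_{\mathscr{F}^0_t}\big(\bar Q_{\omega(-\infty,0]};\mathbb{P}_{\omega(0)}\big)
\geq \frac{1}{t}\,h_{\mathscr{F}^0_t}(Q;\mathbb{P}_\nu)\\
&\geq \frac{1}{t}\,h_{\sigma(\u_t)}(Q;\mathbb{P}_\nu)
=\frac{1}{t}\,h_{\mathcal{B}(\mathrm{E})}(\nu;\nu\P_t),
\end{align*}
the first inequality by Jensen (convexity of relative entropy), the second by monotonicity under restriction of the $\sigma$-field, and the last equality because $Q_t=Q_0=\nu$ by stationarity. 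Taking the infimum over $Q$ yields $\J(\nu)\geq\frac{1}{t}\,h(\nu;\nu\P_t)$. Then $\J(\nu)=0$ forces $h(\nu;\nu\P_t)=0$, i.e.\ $\nu=\nu\P_t$ for every $t>0$, so $\nu$ is invariant and uniqueness gives $\nu=\varrho$. No minimiser---and hence no compactness---is required.
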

	\subsection{The hyper-exponential recurrence criterion}
	The first step in the proof of our main Theorem \ref{main-t} is the proof of LDP for the initial measures in $\M_{1}(\D (\A^{\alpha}))$, since $\u_{t}$ is strongly Feller and topologically irreducible in $\D(\A^{\alpha})$. In that case, we have the  criterion of  hyper-exponential recurrence established in  Theorem 2.1, \cite{LW1} for a general polish space $\mathrm{E}$. The following result is a slight extension of the result in \cite{LW1} to a uniform LDP over a non-empty family of initial measures (cf. \cite{MG1}).
	
	\begin{theorem}[Theorem 2.1, \cite{LW1}, Theorem 3.2, \cite{MG1}]\label{thm4.5}
		Let $\mathcal{A} \subset \M_{1}(\mathrm{E})$ and assume that 
		\begin{align}\label{4.4} \text{$\P_{t}$ is strong Feller and topologically irreducible on $\mathrm{E}$.} \end{align}
		If for all $\lambda >0$, there exists some compact $K \Subset \mathrm{E}$, such that   \begin{align}\label{4.5} \sup_{\nu \in \mathcal{A}}\mathbb{E}^{\nu}[e^{\lambda \tau_{K}}] <\infty, \end{align}
		and 
		\begin{align}\label{4.6} \sup_{\x \in K}\mathbb{E}^{\x} [e^{\lambda \tau^{(1)}_{K}}] < \infty, \end{align}
		where $\tau_{K} = \inf \{t \geq 0; \u_{t} \in K\} \text{ and } \tau^{(1)}_{K}:= \inf \{t \geq 1; \u_{t} \in K\}$, then the family $\mathbb{P}_{\nu}(L_{t}\in \cdot)$ satisfies the LDP on $\M_{1}(\mathrm{E})$ with respect to the $\tau$-topology with the rate function $\J$ defined as level-2 entropy functional, and uniformly for initial measures $\nu$ in the subset $\mathcal{A}$.
		More precisely, the following three properties hold:
		\begin{itemize}
			\item[(a1)] $\J:\M_{1}(\mathrm{E}) \to [0,+\infty]$ is inf-compact with respect to the $\tau$-topology,
			\item[(a2)] (the lower bound) for any $\tau$-open $\G$ in $\M_{1}(\mathrm{E})$,
			\begin{align}\label{4.7}
			\liminf_{T\to\infty} \frac{1}{T}\log \inf_{\nu \in \mathcal{A}} \mathbb{P}_{\nu}(L_{T}\in \G) \geq -\inf_{\G} \J, \end{align}
			\item[(a3)] (the upper bound) for any $\tau$-closed $\F$ in $\M_{1}(\mathrm{E})$,
			\begin{align}\label{4.8}
			\limsup_{T\to\infty} \frac{1}{T}\log \sup_{\nu \in \mathcal{A}} \mathbb{P}_{\nu}(L_{T}\in \F) \leq -\inf_{\F} \J. \end{align}
		\end{itemize}
	\end{theorem}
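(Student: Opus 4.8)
The statement is, as indicated, essentially Theorem~2.1 of \cite{LW1}, the uniformity over the family $\mathcal{A}$ of initial laws being the refinement of \cite[Theorem~3.2]{MG1}; so the plan is to reproduce that abstract argument, and the only thing genuinely left to us (done later, in Proposition~\ref{prop4.6} and Lemma~\ref{lem4.11}) is to verify that our semigroup on $\mathrm{E}$ satisfies the hypotheses. First I would recall that, on a Polish space, an LDP for the laws of $L_t$ in the $\tau$-topology on $\mathrm{M}_1(\mathrm{E})$ is equivalent, via the Dawson--G\"artner projective-limit theorem, to two ingredients: (i) a good-rate-function LDP in $\R^k$ for every finite vector $(L_t(f_1),\dots,L_t(f_k))$, $f_i\in\mathcal{B}_b(\mathrm{E})$, and (ii) $\tau$-exponential tightness of the laws of $L_t$; and that both must hold uniformly over $\nu\in\mathcal{A}$.

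For (i) I would use the G\"artner--Ellis / Feynman--Kac route. For $f\in\mathcal{B}_b(\mathrm{E})$ set $\P^f_t\varphi(\x)=\mathbb{E}^{\x}\!\left[e^{\int_0^t f(\u_s)\,\d s}\varphi(\u_t)\right]$ and show that
$$\Lambda(f):=\lim_{t\to\infty}\frac1t\log\mathbb{E}^{\nu}\!\left[e^{\int_0^t f(\u_s)\,\d s}\right]$$
exists, is finite, is convex in $f$, and is independent of $\nu\in\mathcal{A}$. Strong Feller plus topological irreducibility \eqref{4.4} make $\P^f_t$ irreducible and, after feeding in the recurrence bounds, quasi-compact, so it has a simple principal eigenvalue $e^{\Lambda(f)}$ with a strictly positive eigenfunction; the hitting-time estimate \eqref{4.6} forces that eigenfunction to be bounded away from $0$ and $\infty$ on $K$ and controls the excursions away from $K$, and \eqref{4.5} then transports the exponential asymptotics from the point masses $\delta_{\x}$, $\x\in K$, to every $\nu\in\mathcal{A}$, which is exactly the uniformity. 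G\"artner--Ellis gives the LDP for $L_t(f)$ with rate $\Lambda^{*}$, and the Donsker--Varadhan variational duality $\J(\varrho)=\sup_{f\in\mathcal{B}_b(\mathrm{E})}\{\varrho(f)-\Lambda(f)\}$ identifies the resulting projective-limit rate function with the level-$2$ entropy $\J$ of \eqref{Def.J}.

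For (ii) I would exhaust $\mathrm{E}$ by compacts $K_m\uparrow\mathrm{E}$ with $K_1=K$, and use the strong Markov property together with the two hitting-time bounds to show, for each $m$ and each $\delta>0$,
$$\limsup_{t\to\infty}\frac1t\log\sup_{\nu\in\mathcal{A}}\mathbb{P}_{\nu}\big(L_t(K_m^c)>\delta\big)\le -c_m(\delta),$$
with $c_m(\delta)\to\infty$ appropriately in $m$. For suitable $\delta_m\downarrow 0$ the set $\{\varrho\in\mathrm{M}_1(\mathrm{E}):\varrho(K_m^c)\le\delta_m\ \forall m\}$ is $\tau$-compact (the refinement of Prokhorov's criterion used in \cite{LW1}), which yields $\tau$-exponential tightness uniformly over $\mathcal{A}$. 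Combining (i) and (ii) by the usual inverse-contraction argument gives the full LDP with the good rate function $\J$ in the $\tau$-topology, i.e.\ properties (a1)--(a3).

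The main obstacle is (ii): weak-topology exponential tightness is easy from a single Lyapunov bound, but upgrading it to the much finer $\tau$-topology is precisely what forces the strong hyper-exponential recurrence hypotheses \eqref{4.5}--\eqref{4.6}, and at this abstract level the delicate point is to arrange all the excursion/hitting-time estimates so that every constant is genuinely uniform over the whole family $\mathcal{A}$ of initial measures, not merely over individual starting points.
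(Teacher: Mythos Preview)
The paper does not prove this theorem: it is quoted from \cite{LW1} (with the uniformity over $\mathcal{A}$ taken from \cite[Theorem~3.2]{MG1}) and used as a black box, so there is no ``paper's proof'' to compare against.

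Your outline is broadly in the spirit of Wu's argument, but step (ii) is not correct as written. You claim that sets of the form $\{\varrho\in\M_1(\mathrm{E}):\varrho(K_m^c)\le\delta_m\ \text{for all }m\}$ are $\tau$-compact; this is Prokhorov's criterion, and it yields relative compactness only in the \emph{weak} topology $\sigma(\M_b(\mathrm{E}),\C_b(\mathrm{E}))$, not in the strictly finer $\tau$-topology $\sigma(\M_b(\mathrm{E}),\mathcal{B}_b(\mathrm{E}))$. On a non-discrete Polish space such sets are in general not $\tau$-compact, so this does not give $\tau$-exponential tightness. In fact, if you run Dawson--G\"artner properly you do not need a separate $\tau$-exponential-tightness step at all: the projective-limit LDP on $(\M_1(\mathrm{E}),\tau)$ follows directly from the finite-dimensional LDPs of (i), and what is left is to identify the projective-limit rate function with $\J$ and to prove that $\J$ is \emph{good} (inf-compact for $\tau$). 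The goodness is where the hyper-exponential recurrence \eqref{4.5}--\eqref{4.6} genuinely enters, through uniform control of the Feynman--Kac principal eigenvalue $\Lambda(f)$, which is how \cite{LW1} obtains (a1). Your part (i) --- existence and $\nu$-independence of $\Lambda(f)$ via quasi-compactness of $\P^f_t$ under strong Feller, irreducibility, and the hitting-time bounds, followed by the duality $\J=\Lambda^{*}$ --- is the correct mechanism for (a2)--(a3).
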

	\subsection{Exponential estimates for the solution}
	In this subsection, we prove a crucial exponential estimate for the solution $\u(\cdot)$ to the 2D SCBF equations \eqref{2.7}, which is helpful in obtaining the LDP results.
	\begin{proposition}\label{prop4.6}
		For any fixed $0<\lambda_{0} < \frac{\mu \lambda_{1}}{2\|\Q\|_{\mathcal{L}(\H)}} $, where $\|\Q\|_{\mathcal{L}(\H)}$ is the norm of $\Q$ as an operator in $\H$ and any $ \x \in \H$, the process $\u(\cdot)$ satisfies the following estimate: 
		\begin{align*} &\mathbb{E}^{\x}\left\{ \exp \left(\lambda_{0}\|\u(t)\|^{2}_{\H}+ \mu \lambda_{0} \int_{0}^{t} \|\A^{\frac{1}{2}}\u(s)\|^{2}_{\H}\d s+\beta \lambda_{0}\int_{0}^{t}\|\u(s)\|^{r+1}_{\L^{r+1}}\d s \right)\right\} \\& \leq e^{t\lambda_{0}\big(\Tr(\Q)+ \frac{\|\f\|_{\H}^{2}}{\mu \lambda_{1}-2\|\Q\|_{\mathcal{L}(\H)}\lambda_{0}}\big)}e^{\lambda_{0}\|\x\|^{2}_{\H}}.
		\end{align*}
		In particular, the following estimates hold:
		\begin{align}\label{5.1} \mathbb{E}^{\x}\left\{ \exp \big(\lambda_{0}\|\u(t)\|^{2}_{\H}\big)\right\} \leq e^{t\lambda_{0}\big(\Tr(\Q)+ \frac{\|\f\|_{\H}^{2}}{\mu \lambda_{1}-2\|\Q\|_{\mathcal{L}(\H)}\lambda_{0}}\big)}e^{\lambda_{0}\|\x\|^{2}_{\H}},
		\end{align}
		and 
		\begin{align}\label{5.2} \mathbb{E}^{\x} \left\{\exp\bigg(\mu \lambda_{0}\int_{0}^{t} \|\A^{\frac{1}{2}}\u(s)\|^{2}_{\H}\d s \bigg)\right\}  \leq e^{t\lambda_{0}\big(\Tr(\Q)+ \frac{\|\f\|_{\H}^{2}}{ \mu \lambda_{1}-2\|\Q\|_{\mathcal{L}(\H)}\lambda_{0}}\big)}e^{\lambda_{0}\|\x\|^{2}_{\H}},
		\end{align} and
		\begin{align}\label{ESC}
		\mathbb{E}^{\x} \left\{\exp\bigg(\beta \lambda_{0}\int_{0}^{t}\|\u(s) \|^{r+1}_{\L^{r+1}} \d s\bigg)\right\}\leq e^{t\lambda_{0}\big(\Tr(\Q)+ \frac{\|\f\|_{\H}^{2}}{\mu \lambda_{1}-2\|\Q\|_{\mathcal{L}(\H)}\lambda_{0}}\big)}e^{\lambda_{0}\|\x\|^{2}_{\H}}. 
		\end{align}
		Furthermore, for any fixed $R>1$, we have 
		\begin{align}\label{5.3} \sup_{\nu \in \mathcal{M}_{\lambda_{0},R}}\mathbb{E}^{\nu} \left\{ \exp\bigg(\mu \lambda_{0}\int_{0}^{t} \|\A^{\frac{1}{2}}\u(s)\|^{2}_{\H}\d s \bigg)\right\} \leq e^{t\lambda_{0}\big(\Tr(\Q)+ \frac{\|\f\|_{\H}^{2}}{\mu \lambda_{1}-2\|\Q\|_{\mathcal{L}(\H)}\lambda_{0}}\big)}R,
		\end{align}
		where $\mathcal{M}_{\lambda_{0},R}$ is the set of initial measures defined by \eqref{4.1}.
	\end{proposition}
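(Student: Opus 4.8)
The plan is to apply the infinite-dimensional It\^o formula to the function $\x\mapsto e^{\lambda_0\|\x\|_{\H}^2}$ along the solution $\u(\cdot)$ of \eqref{2.7}, and then use a supermartingale argument. First I would apply It\^o's formula (in the form guaranteed by the energy equality in the definition of strong solution) to $\|\u(t)\|_{\H}^2$ and to the exponential. Let $\Psi(t):=\|\u(t)\|_{\H}^2+2\mu\int_0^t\|\A^{1/2}\u(s)\|_{\H}^2\d s+2\beta\int_0^t\|\u(s)\|_{\L^{r+1}}^{r+1}\d s$, so that from the It\^o formula in the definition of strong solution,
\begin{align*}
\d\|\u(t)\|_{\H}^2 &= \left(-2\mu\|\A^{1/2}\u(t)\|_{\H}^2-2\beta\|\u(t)\|_{\L^{r+1}}^{r+1}+2\langle\f,\u(t)\rangle+\Tr(\Q)\right)\d t\\
&\quad+2(\G\d\W(t),\u(t)).
\end{align*}
Then, applying It\^o's formula to $e^{\lambda_0\|\u(t)\|_{\H}^2}$ and using $\|\G^*\u\|_{\H}^2\le\|\Q\|_{\mathcal{L}(\H)}\|\u\|_{\H}^2$ for the quadratic variation term, the drift of
$$
\exp\!\left(\lambda_0\|\u(t)\|_{\H}^2+\mu\lambda_0\int_0^t\|\A^{1/2}\u(s)\|_{\H}^2\d s+\beta\lambda_0\int_0^t\|\u(s)\|_{\L^{r+1}}^{r+1}\d s\right)
$$
works out to be bounded above by $\lambda_0 e^{(\cdots)}\big(\Tr(\Q)+2\langle\f,\u\rangle-\mu\|\A^{1/2}\u\|_{\H}^2+2\lambda_0\|\Q\|_{\mathcal{L}(\H)}\|\u\|_{\H}^2\big)$. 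Here I absorb the mixed term $2\langle\f,\u\rangle$ by Young's inequality, $2\langle\f,\u\rangle\le \frac{\|\f\|_{\H}^2}{\mu\lambda_1-2\|\Q\|_{\mathcal{L}(\H)}\lambda_0}+(\mu\lambda_1-2\|\Q\|_{\mathcal{L}(\H)}\lambda_0)\|\u\|_{\H}^2$, and then use Poincar\'e's inequality $\|\u\|_{\H}^2\le\lambda_1^{-1}\|\A^{1/2}\u\|_{\H}^2$ to see that $-\mu\|\A^{1/2}\u\|_{\H}^2+(\mu\lambda_1-2\|\Q\|_{\mathcal{L}(\H)}\lambda_0)\|\u\|_{\H}^2+2\lambda_0\|\Q\|_{\mathcal{L}(\H)}\|\u\|_{\H}^2\le 0$. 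Thus the process
$$
M(t):=\exp\!\left(\lambda_0\Psi(t)-t\lambda_0\Big(\Tr(\Q)+\tfrac{\|\f\|_{\H}^2}{\mu\lambda_1-2\|\Q\|_{\mathcal{L}(\H)}\lambda_0}\Big)\right)
$$
is a nonnegative local supermartingale, where $\Psi$ has been replaced by the combination with the smaller constant $\lambda_0$ (not $2\mu\lambda_0$, $2\beta\lambda_0$) as in the statement; a localization/Fatou argument then gives $\mathbb{E}^{\x}M(t)\le M(0)=e^{\lambda_0\|\x\|_{\H}^2}$, which is exactly the first displayed inequality. The three particular estimates \eqref{5.1}, \eqref{5.2}, \eqref{ESC} follow by dropping the other two nonnegative terms inside the exponential before taking expectations.

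For the uniform bound \eqref{5.3} over $\nu\in\mathcal{M}_{\lambda_0,R}$, I would integrate \eqref{5.2} against $\nu(\d\x)$: $\mathbb{E}^{\nu}\{\exp(\mu\lambda_0\int_0^t\|\A^{1/2}\u(s)\|_{\H}^2\d s)\}\le e^{t\lambda_0(\Tr(\Q)+\cdots)}\int_{\H}e^{\lambda_0\|\x\|_{\H}^2}\nu(\d\x)\le e^{t\lambda_0(\Tr(\Q)+\cdots)}R$, using the defining property \eqref{4.1} of $\mathcal{M}_{\lambda_0,R}$.

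The main obstacle is the rigorous justification of the It\^o formula for $e^{\lambda_0\|\cdot\|_{\H}^2}$ — the solution only lives in $\V$ for a.e.\ time and the nonlinearities $\B(\u)$, $\mathcal{C}(\u)$ are merely in $\V'$ and $\wi\L^{(r+1)/r}$, so one cannot directly apply a Hilbert-space It\^o formula. I would handle this, as is standard (and consistent with the regularity stated in Theorem \ref{exis2}), by first writing It\^o's formula for the Faedo--Galerkin approximations $\u_n$ (where everything is finite-dimensional and smooth), establishing the supermartingale inequality and hence the bound $\mathbb{E}^{\x}M_n(t)\le e^{\lambda_0\|\Pi_n\x\|_{\H}^2}\le e^{\lambda_0\|\x\|_{\H}^2}$ uniformly in $n$, and then passing to the limit using the convergence $\u_n\to\u$ together with Fatou's lemma (lower semicontinuity of the exponential functional along the convergent subsequence). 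The cancellation $b(\u_n,\u_n,\u_n)=0$ from \eqref{2.5} is what makes the $\B$-term disappear at the Galerkin level, and $\langle\mathcal{C}(\u_n),\u_n\rangle=\|\u_n\|_{\wi\L^{r+1}}^{r+1}\ge0$ keeps that term on the good side; the stochastic integral against $\G_n\W$ is a genuine martingale for each $n$, so the localization is only needed at the final limiting stage, or can be avoided entirely by the standard stopping-time truncation $\tau_N=\inf\{t:\|\u(t)\|_{\H}>N\}$ followed by $N\to\infty$.
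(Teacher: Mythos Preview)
Your proposal is correct and follows essentially the same route as the paper: the paper also works at the Galerkin level, applies It\^o's formula to $\|\u_n\|_{\H}^2$ and then to $e^{\lambda_0 Z_n(t)}$ with $Z_n(t)=\|\u_n(t)\|_{\H}^2+\mu\int_0^t\|\A^{1/2}\u_n\|_{\H}^2\d s+\beta\int_0^t\|\u_n\|_{\wi\L^{r+1}}^{r+1}\d s$, uses the same three ingredients ($\|\G_n^*\u_n\|_{\H}^2\le\|\Q\|_{\mathcal{L}(\H)}\|\u_n\|_{\H}^2$, Young's inequality with the specific $\epsilon=\mu\lambda_1-2\lambda_0\|\Q\|_{\mathcal{L}(\H)}$, and Poincar\'e) to make the drift of $e^{-\lambda_0(\Tr(\Q)+\|\f\|_{\H}^2/\epsilon)t}e^{\lambda_0 Z_n(t)}$ nonpositive, and then passes to the limit via an a.s.\ convergence of the energy quantities combined with Fatou's lemma. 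The only cosmetic difference is that the paper packages the Galerkin estimate and the a.s.\ convergence as a separate lemma before invoking $\liminf$, whereas you describe both steps in one breath.
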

	Before going to the proof of Proposition \ref{prop4.6}, let us discuss some consequences of the exponential estimates. 
	\begin{corollary}[Corollary 4.2, \cite{MG1}]
		Under the estimate \eqref{5.2}, the family of laws $\mathbb{P}_{\nu}(L_{t}\in \cdot)$ is uniformly exponential tight over $\mathcal{M}_{\lambda_{0},R}$. More precisely, for any $\epsilon>0$, there is some compact subset $\K=\K_{\epsilon}$ in $\M_{1}(\H)$ in the weak convergence topology such that 
		\begin{align*} \limsup_{t \to \infty } \frac{1}{t} \log \sup_{\nu \in \mathcal{M}_{\lambda_{0},R}} \mathbb{P}_{\nu}(L_{t} \notin \K) \leq - \frac{1}{\epsilon}.
		\end{align*}
		Consequently for any closed set $\F \text{ in } \M_{1}(\H)$ equipped with the weak convergence topology $\sigma(\M_{1}(\H), \C_{b}(\H))$, we have 
		\begin{align}\label{5.4} \limsup_{t \to \infty } \frac{1}{t} \log \sup_{\nu \in \mathcal{M}_{\lambda_{0},R}} \mathbb{P}_{\nu}(L_{t} \in \F) \leq - \inf_{\F}\J,
		\end{align}
		where the entropy of Donsker-Varadhan $\J:\nu \in \M_{1}(\H) \to \J(\nu) \in [0,+\infty] $ satisfies 
		\begin{align}\label{5.5} \mu\lambda_{0} \int_{\H} \|\A^{\frac{1}{2}}\x\|^{2}_{\H}\d \nu \leq \J(\nu) + \lambda_{0} \bigg(\Tr(\Q)+ \frac{\|\f\|_{\H}^{2}}{\mu \lambda_{1}-2 \|\Q\|_{\mathcal{L}(\H)}\lambda_{0}}\bigg), \end{align}
		for any $0 <\lambda_{0}< \frac{\mu \lambda_{1}}{2\|\Q\|_{\mathcal{L}(\H)}}$.
	\end{corollary}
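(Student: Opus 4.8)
The plan is to deduce all three assertions directly from the exponential estimates of Proposition \ref{prop4.6}, using the compactness of the embedding $\D(\A^{\frac{1}{2}})\subset\H$. First, for the uniform exponential tightness I would introduce the functional $\Psi(\mu):=\int_{\H}\|\A^{\frac{1}{2}}\x\|_{\H}^2\,\d\mu(\x)\in[0,+\infty]$ on $\M_1(\H)$. Since $\x\mapsto\|\A^{\frac{1}{2}}\x\|_{\H}^2$ is a nonnegative lower semicontinuous function on $\H$ (a supremum of the bounded continuous functions $\|\A^{\frac{1}{2}}\x\|_{\H}^2\wedge N$), the map $\Psi$ is lower semicontinuous for the weak convergence topology, so each sublevel set $\K_M:=\{\mu\in\M_1(\H):\Psi(\mu)\le M\}$ is weakly closed. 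By the Rellich--Kondrachov compactness of $\D(\A^{\frac{1}{2}})\hookrightarrow\H$, the ball $\{\x:\|\A^{\frac{1}{2}}\x\|_{\H}\le K\}$ is compact in $\H$, and Chebyshev's inequality gives $\mu(\{\|\A^{\frac{1}{2}}\x\|_{\H}>K\})\le M/K^2$ uniformly over $\K_M$; hence $\K_M$ is tight, and by Prohorov's theorem weakly relatively compact. Being closed, it is weakly compact.

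Next I would use that $\int_{\H}\|\A^{\frac{1}{2}}\x\|_{\H}^2\,\d L_t(\x)=\frac{1}{t}\int_0^t\|\A^{\frac{1}{2}}\u(s)\|_{\H}^2\,\d s$, so that the event $\{L_t\notin\K_M\}$ coincides with $\{\frac{1}{t}\int_0^t\|\A^{\frac{1}{2}}\u(s)\|_{\H}^2\,\d s>M\}$. An exponential Chebyshev inequality combined with \eqref{5.3}, writing $C:=\Tr(\Q)+\|\f\|_{\H}^2/(\mu\lambda_1-2\|\Q\|_{\mathcal{L}(\H)}\lambda_0)$, yields
$$\sup_{\nu\in\mathcal{M}_{\lambda_0,R}}\mathbb{P}_{\nu}(L_t\notin\K_M)\le e^{-\mu\lambda_0 tM}\,e^{t\lambda_0 C}R,$$
whence $\limsup_{t\to\infty}\frac{1}{t}\log\sup_{\nu}\mathbb{P}_{\nu}(L_t\notin\K_M)\le-\lambda_0(\mu M-C)$. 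Choosing $M=M_\epsilon:=\mu^{-1}(1/(\epsilon\lambda_0)+C)$ and $\K_\epsilon:=\K_{M_\epsilon}$ produces the claimed exponential tightness bound $\le-1/\epsilon$.

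For the upper bound \eqref{5.4} over weakly closed sets, I would combine this tightness with the general Donsker--Varadhan upper bound for compact sets, which holds for the Feller process $\u(\cdot)$ (cf. \cite{MDD,LW1}) and is automatically uniform over all initial distributions since $\mathbb{P}_{\nu}(L_t\in\cdot)\le\sup_{\x}\mathbb{P}_{\x}(L_t\in\cdot)$. Given a weakly closed $\F$ and $\epsilon>0$, I split $\F=(\F\cap\K_\epsilon)\cup(\F\setminus\K_\epsilon)$: the compact piece obeys $\limsup_{t\to\infty}\frac{1}{t}\log\sup_{\nu}\mathbb{P}_{\nu}(L_t\in\F\cap\K_\epsilon)\le-\inf_{\F\cap\K_\epsilon}\J\le-\inf_{\F}\J$, while the complementary piece is controlled by $-1/\epsilon$. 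The elementary rule for $\limsup\frac{1}{t}\log$ of a sum of two probabilities then gives $\limsup_{t\to\infty}\frac{1}{t}\log\sup_{\nu}\mathbb{P}_{\nu}(L_t\in\F)\le\max\{-\inf_{\F}\J,-1/\epsilon\}$, and letting $\epsilon\downarrow0$ yields \eqref{5.4}.

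Finally, for the entropy inequality \eqref{5.5} I would invoke the duality estimate for the level-2 entropy of Donsker--Varadhan: for every bounded measurable $g$ and every $\nu\in\M_1(\H)$, $\int g\,\d\nu\le\J(\nu)+\Lambda(g)$, where $\Lambda(g):=\sup_{\x}\limsup_{t\to\infty}\frac{1}{t}\log\mathbb{E}^{\x}\exp(\int_0^t g(\u_s)\,\d s)$ (the easy half of the variational formula, cf. \cite{MDD,MG1}). Applying this to $g_N:=\mu\lambda_0(\|\A^{\frac{1}{2}}\cdot\|_{\H}^2\wedge N)$ and bounding $\Lambda(g_N)\le\lambda_0 C$ via \eqref{5.2} (the initial factor $e^{\lambda_0\|\x\|_{\H}^2}$ disappearing under $\frac{1}{t}\log$ as $t\to\infty$) gives $\int g_N\,\d\nu\le\J(\nu)+\lambda_0 C$; letting $N\uparrow\infty$ and using monotone convergence yields \eqref{5.5}. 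I expect the main obstacle to be the bookkeeping needed to keep the uniformity over $\mathcal{M}_{\lambda_0,R}$ genuine throughout, in particular verifying that both the Feller compact-set upper bound and the duality step remain uniform in the initial law, together with the care forced by the fact that $\x\mapsto\|\A^{\frac{1}{2}}\x\|_{\H}^2$ is only lower semicontinuous (and unbounded) on $\H$, so that every argument must be carried out on the truncations $\|\A^{\frac{1}{2}}\cdot\|_{\H}^2\wedge N$ and then passed to the limit rather than applied to the functional directly.
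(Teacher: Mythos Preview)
Your proposal is correct and follows essentially the same route as the paper (which in fact omits the argument and cites \cite{MG1,MG2}): exponential tightness via sublevel sets of $\mu\mapsto\int\|\A^{1/2}\x\|_{\H}^2\,\d\mu$ combined with the Chebyshev bound coming from \eqref{5.2}--\eqref{5.3}; the weak upper bound \eqref{5.4} from the general compact-set upper bound plus tightness; and the entropy inequality \eqref{5.5} from the Donsker--Varadhan duality $\nu(V)\le\J(\nu)+\Lambda^0(V)$ applied to the truncations $V_N=\mu\lambda_0(\|\A^{1/2}\cdot\|_{\H}^2\wedge N)$ and passage to the limit. The only cosmetic difference is that the paper invokes the identity $(\Lambda^0)^*=\J$ from \cite{LW1} and uses Fatou's lemma where you use monotone convergence, and you spell out the Prohorov/Rellich--Kondrachov step for compactness of $\K_M$ that the paper leaves to the reference.
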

	The exponential estimates discussed above grants us an alternative approach to prove the existence of an invariant measure based on LDP. In particular, we have 
	\begin{corollary}[Corollary 4.3, \cite{MG1}]
		Assume that a Feller-Markov process $\u$ on $\H$ satisfies the exponential estimate \eqref{5.2}, then $\u$ admits at least one invariant measure.
	\end{corollary}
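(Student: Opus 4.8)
The final statement is the Corollary asserting that a Feller--Markov process $\u$ on $\H$ satisfying the exponential estimate \eqref{5.2} admits at least one invariant measure. The plan is to use the classical Krylov--Bogoliubov procedure, where the exponential bound \eqref{5.2} is the mechanism that delivers tightness of the time-averaged measures. Throughout I may take \eqref{5.2} as given, together with the Feller property of the semigroup $\{\P_t\}_{t\geq 0}$ stated in the hypothesis.

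\emph{Step 1: Construct the averaged measures.} Fix an initial point, say $\x=\mathbf{0}\in\H$, and for $T>0$ define the Krylov--Bogoliubov averages
\begin{align}\label{KBmeas}
\mu_T(B):=\frac{1}{T}\int_0^T\P(t,\mathbf{0},B)\d t,\quad B\in\mathcal{B}(\H),
\end{align}
so that $\mu_T\in\M_1(\H)$. The standard fact I will invoke is that if $\{\mu_T\}_{T>0}$ is tight on $\H$ (with the strong topology), then any weak limit point $\nu$ of $\mu_{T_k}$ along a sequence $T_k\to\infty$ is invariant for $\{\P_t\}_{t\geq 0}$; this last implication uses only the Feller property, via the identity $\mu_T(\P_s\varphi)-\mu_T(\varphi)=\frac{1}{T}\int_T^{T+s}\P(t,\mathbf{0},\varphi)\d t-\frac{1}{T}\int_0^s\P(t,\mathbf{0},\varphi)\d t\to 0$ for every $\varphi\in\C_b(\H)$.

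\emph{Step 2: Extract tightness from \eqref{5.2}.} This is the crucial point. The embedding $\V=\D(\A^{1/2})\hookrightarrow\H$ is compact (by the Rellich--Kondrachov embedding noted after \eqref{2.4}), so the balls $\mathrm{B}_M:=\{\x\in\H:\|\A^{1/2}\x\|_{\H}\leq M\}$ are compact in $\H$. From \eqref{5.2} with $\x=\mathbf{0}$ and Jensen's inequality applied to $s\mapsto\|\A^{1/2}\u(s)\|_{\H}^2$, one gets a uniform (in $t$) bound on $\frac{1}{t}\int_0^t\mathbb{E}^{\mathbf{0}}\|\A^{1/2}\u(s)\|_{\H}^2\d s$; more directly, $\exp(\mu\lambda_0\int_0^t\|\A^{1/2}\u(s)\|_{\H}^2\d s)\geq\mu\lambda_0\int_0^t\|\A^{1/2}\u(s)\|_{\H}^2\d s$ gives, after taking expectations and using \eqref{5.2},
\begin{align}\label{Vbound}
\frac{1}{T}\int_0^T\mathbb{E}^{\mathbf{0}}\|\A^{1/2}\u(s)\|_{\H}^2\d s\leq\frac{1}{\mu\lambda_0}\left(\Tr(\Q)+\frac{\|\f\|_{\H}^2}{\mu\lambda_1-2\|\Q\|_{\mathcal{L}(\H)}\lambda_0}\right)=:C_0,
\end{align}
uniformly in $T>0$. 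Consequently $\int_{\H}\|\A^{1/2}\x\|_{\H}^2\d\mu_T=\frac{1}{T}\int_0^T\mathbb{E}^{\mathbf{0}}\|\A^{1/2}\u(s)\|_{\H}^2\d s\leq C_0$, and Chebyshev's inequality yields $\mu_T(\H\setminus\mathrm{B}_M)\leq C_0/M^2$ uniformly in $T$. Since $\mathrm{B}_M$ is compact in $\H$, this is precisely the tightness of $\{\mu_T\}_{T>0}$.

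\emph{Step 3: Conclude.} By Prokhorov's theorem there is a sequence $T_k\to\infty$ and $\nu\in\M_1(\H)$ with $\mu_{T_k}\rightharpoonup\nu$ weakly. By Step 1 and the Feller property, $\nu$ is an invariant measure for $\{\P_t\}_{t\geq 0}$, which is exactly the claim. The main obstacle is Step 2: one must convert the exponential integrability of the time-integrated $\V$-norm into honest tightness, and the key ingredients are the compactness of $\V\hookrightarrow\H$ together with the observation that the right-hand side of \eqref{5.2} is bounded \emph{uniformly in $t$} (the exponent is linear in $t$, so dividing by $T$ and using $e^x\geq x$ kills the $t$-dependence), giving the $T$-uniform second-moment bound \eqref{Vbound}. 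Everything else is the routine Krylov--Bogoliubov machinery, for which I would refer to \cite{GDJZ}.
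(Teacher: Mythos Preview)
Your Krylov--Bogoliubov argument is essentially correct, but there is a small slip in Step~2: the inequality $e^x\geq x$ applied to $\mu\lambda_0\int_0^T\|\A^{1/2}\u(s)\|_{\H}^2\d s$ and then \eqref{5.2} only yields $\mu\lambda_0\int_0^T\mathbb{E}^{\mathbf{0}}\|\A^{1/2}\u(s)\|_{\H}^2\d s\leq e^{T\lambda_0 C_0'}$, which after division by $T$ is \emph{not} uniformly bounded. The correct route is the Jensen argument you mention first: from $\exp\big(\mathbb{E}^{\mathbf{0}}[\mu\lambda_0\int_0^T\|\A^{1/2}\u\|_{\H}^2\d s]\big)\leq\mathbb{E}^{\mathbf{0}}[\exp(\cdots)]\leq e^{T\lambda_0 C_0'}$ one takes logarithms and divides by $T$ to obtain the uniform bound (with constant $C_0'/\mu$ rather than your $C_0$). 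With this fix, Steps~1--3 go through.

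Your approach, however, is quite different from the paper's (which follows \cite{MG1}). The paper argues via the large deviation machinery already set up in Corollary~4.7: the estimate \eqref{5.5} shows that every level set $[\J\leq a]$ is tight in $\M_1(\H)$ (via Chebyshev and the compact embedding $\D(\A^{1/2})\hookrightarrow\H$), so $\J$ is a good rate function for the weak topology; applying the upper bound \eqref{5.4} with $\F=\M_1(\H)$ then forces $\inf_{\M_1(\H)}\J=0$, hence some $\nu$ has $\J(\nu)=0$, and Proposition~\ref{prop4.4} identifies such $\nu$ as invariant. Your argument is more elementary and self-contained---it needs only the Feller property, the compact embedding, and the moment bound extracted from \eqref{5.2}---whereas the paper's argument stays within the LDP framework and exploits the goodness of $\J$ and the characterization $\J(\nu)=0\Leftrightarrow\nu=\varrho$, which is thematically consistent with the surrounding material but relies on heavier machinery (the weak upper bound \eqref{5.4} and the entropy identification in Proposition~\ref{prop4.4}).
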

	We need the following result to discuss about the proof of Proposition \ref{prop4.6}. 	Let us first consider the following finite dimensional Galerkin approximations of the system \eqref{2.7}:
	\begin{equation}\label{5.8}
	\left\{
	\begin{aligned}
	\d\u_{n}(t) + [\mu \A\u_{n}(t) + \B_{n}(\u_{n}(t)) + \beta \mathcal{C}_{n}(\u_{n}(t)) ] \d t & = \f_{n} \d t + \G_{n}\d \W(t), \\
	\u_{n}(0)&=\x_{n}:= \Pi_{n}\x. 
	\end{aligned}
	\right.
	\end{equation} 
	Since \eqref{5.8} is a finite dimensional stochastic equation with locally Lipschitz coefficients (see \eqref{2p9} and \eqref{214}), and satisfying the a-priori energy estimate (cf. \cite{MTM1})
	\begin{align}\label{4.18}
	&	\E\left[\sup_{t\in[0,T]}\|\u_n(t)\|_{\H}^2+\mu\int_0^T\|\A^{\frac{1}{2}}\u_n(t)\|_{\H}^2\d t+\beta\int_0^T\|\u_n(t)\|_{\wi\L^{r+1}}^{r+1}\d t\right]\nonumber\\&\leq C\left(\|\x\|_{\H}^2+\Tr(\G\G^*)T+\|\f\|_{\H}^2T\right),
	\end{align}
	one can deduce that the system has a unique solution $\u_n\in\C([0,T];\H_n)$. 
	\begin{lemma}\label{lem4.9}
		Let $\u_n(\cdot)$ and $\u(\cdot)$ be solutions to the systems  \eqref{5.8} and  \eqref{2.7}, respectively. Then, we have 
		\begin{align}\label{419}
		&	\|\u_{n}(t)\|^{2}_{\H}+ 2\mu \int_{0}^{t} \|\A^{\frac{1}{2}}\u_{n}(s)\|^{2}_{\H}\d s+ 2\beta \int_{0}^{t} \|\u_{n}(s)\|^{r+1}_{\L^{r+1}}\d s\nonumber\\& \xrightarrow{\text{a.s.}}  \|\u(t)\|^{2}_{\H} + 2\mu \int_{0}^{t} \|\A^{\frac{1}{2}}\u(s)\|^{2}_{\H}\d s + 2\beta \int_{0}^{t} \|\u(s)\|^{r+1}_{\L^{r+1}}\d s\ \text{  as } \ n\to \infty,
		\end{align} 
		for all $t\in[0,T]$.
		
		Furthermore, we have the following exponential estimate: 
		\begin{align}\label{5.12} \nonumber
		&	\mathbb{E}^{\x}\left\{\exp\bigg(\lambda_{0}\|\u_{n}(s)\|^{2}_{\H}+ \mu\lambda_{0}\int_{0}^{t}\| \A^\frac{1}{2}\u_{n}(s)\|^{2}_{\H}\d s + \beta \lambda_{0}\int_{0}^{t}\|\u_{n}(s)\|^{r+1}_{\L^{r+1}}\d s\bigg)\right\} \\& \leq 
		e^{\lambda_{0}t\bigg(\Tr(\Q)+\frac{\|\f\|^{2}}{\mu \lambda_{1}-2\lambda_{0}\|\Q\|_{\mathcal{L}(\H)}}\bigg)}e^{\lambda_{0}\|\x\|^{2}_{\H}}.
		\end{align}
	\end{lemma}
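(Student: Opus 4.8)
The plan is to establish the two assertions in the order they are least/most delicate: first the exponential bound \eqref{5.12}, which is obtained by a direct It\^o computation on the finite-dimensional system \eqref{5.8}, and then the convergence \eqref{419}, which is deduced by passing to the limit in the energy equalities satisfied by $\u_{n}$ and $\u$.

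For \eqref{5.12} I would apply It\^o's formula to $\|\u_{n}(\cdot)\|_{\H}^{2}$; since $\langle\B_{n}(\u_{n}),\u_{n}\rangle=0$ and $\langle\mathcal{C}_{n}(\u_{n}),\u_{n}\rangle=\|\u_{n}\|_{\L^{r+1}}^{r+1}$, the drift equals $-2\mu\|\A^{1/2}\u_{n}\|_{\H}^{2}-2\beta\|\u_{n}\|_{\L^{r+1}}^{r+1}+2\langle\f_{n},\u_{n}\rangle+\Tr(\G_{n}\G_{n}^{*})$. Writing $\Psi_{n}(t)$ for the full exponent on the left-hand side of \eqref{5.12} and applying It\^o's formula to $e^{\Psi_{n}(t)}$, the quadratic-variation correction contributes $2\lambda_{0}^{2}\|\G_{n}^{*}\u_{n}\|_{\H}^{2}\le 2\lambda_{0}^{2}\|\Q\|_{\mathcal{L}(\H)}\|\u_{n}\|_{\H}^{2}$ (using $\|\G_{n}^{*}\u_{n}\|_{\H}^{2}=\|\Pi_{n}\G^{*}\u_{n}\|_{\H}^{2}\le\langle\Q\u_{n},\u_{n}\rangle$ for $\u_{n}\in\H_{n}$), while $\Tr(\G_{n}\G_{n}^{*})=\sum_{k=1}^{n}\|\Pi_{n}\G^{*}e_{k}\|_{\H}^{2}\le\Tr(\Q)$. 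The decisive elementary bound is then: by Poincar\'e $\|\u_{n}\|_{\H}^{2}\le\lambda_{1}^{-1}\|\A^{1/2}\u_{n}\|_{\H}^{2}$ and by Young $2\langle\f_{n},\u_{n}\rangle\le\delta\|\A^{1/2}\u_{n}\|_{\H}^{2}+(\delta\lambda_{1})^{-1}\|\f\|_{\H}^{2}$; choosing $\delta=\mu-2\lambda_{0}\|\Q\|_{\mathcal{L}(\H)}/\lambda_{1}$, which is strictly positive precisely because $\lambda_{0}<\mu\lambda_{1}/(2\|\Q\|_{\mathcal{L}(\H)})$, makes the net coefficient of $\|\A^{1/2}\u_{n}\|_{\H}^{2}$ cancel, leaving only the negative $\beta$-term, the martingale differential, and the constant $\lambda_{0}\bigl(\Tr(\Q)+\|\f\|_{\H}^{2}/(\mu\lambda_{1}-2\lambda_{0}\|\Q\|_{\mathcal{L}(\H)})\bigr)$ times $e^{\Psi_{n}}$. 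Localising by $\tau_{m}=\inf\{t:\|\u_{n}(t)\|_{\H}\ge m\}$ (so that the stochastic integral is a genuine martingale, $\H_{n}$ being finite-dimensional), taking expectations, applying Gronwall's inequality and letting $m\to\infty$ via Fatou's lemma gives \eqref{5.12}, since $e^{\Psi_{n}(0)}=e^{\lambda_{0}\|\Pi_{n}\x\|_{\H}^{2}}\le e^{\lambda_{0}\|\x\|_{\H}^{2}}$.

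For \eqref{419} I would use that both $\u_{n}$ and $\u$ satisfy the It\^o (energy) equality of item (iii) of the definition of strong solution, with coefficients $(\f_{n},\G_{n})$ and $(\f,\G)$ respectively (for the Galerkin system this is just the finite-dimensional It\^o formula). By Theorem \ref{exis2} and the Galerkin construction in \cite{MTM1}, up to a subsequence $\u_{n}\to\u$ strongly in $\mathrm{L}^{2}(\Omega;\mathrm{L}^{2}(0,T;\H))$ and $\mathbb{P}$-a.s.\ in $\C([0,T];\H)$, so $\|\u_{n}(t)\|_{\H}^{2}\to\|\u(t)\|_{\H}^{2}$ and $\int_{0}^{t}\langle\f_{n},\u_{n}\rangle\d s\to\int_{0}^{t}\langle\f,\u\rangle\d s$ a.s.\ (using $\f_{n}\to\f$ in $\H$); moreover $\Tr(\G_{n}\G_{n}^{*})\to\Tr(\Q)$, and by It\^o's isometry $\mathbb{E}\bigl|\int_{0}^{t}(\u_{n},\G_{n}\d\W)-\int_{0}^{t}(\u,\G\d\W)\bigr|^{2}=\mathbb{E}\int_{0}^{t}\|\Pi_{n}\G^{*}\u_{n}-\G^{*}\u\|_{\H}^{2}\d s\to0$ (splitting the integrand as $\|\G^{*}\|_{\mathcal{L}(\H)}\|\u_{n}-\u\|_{\H}+\|(\Pi_{n}-I)\G^{*}\u\|_{\H}$ and using dominated convergence for the second term), hence a.s.\ along a further subsequence. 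Consequently the right-hand side of the Galerkin energy equality converges a.s.\ to that of the limiting one, so the left-hand side does too, which is exactly \eqref{419}; uniqueness of $\u$ then promotes this to convergence of the whole sequence.

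The main obstacle is precisely this passage to the limit in \eqref{419}: under the only a priori bound available, \eqref{4.18}, the ``surface'' terms $\int_{0}^{t}\|\A^{1/2}\u_{n}\|_{\H}^{2}\d s$ and $\int_{0}^{t}\|\u_{n}\|_{\L^{r+1}}^{r+1}\d s$ are merely weakly lower semicontinuous and cannot be passed to the limit directly; one is forced to trade them against the linear and martingale terms through the energy equality, which is why it is essential that the limit $\u$ itself obeys the It\^o formula (guaranteed by the definition of strong solution together with Theorem \ref{exis2}) and why the convergence of the stochastic integral $\int_{0}^{t}(\u_{n},\G_{n}\d\W)$ and of $\Tr(\G_{n}\G_{n}^{*})$ must be justified with care.
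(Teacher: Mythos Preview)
Your argument for the exponential estimate \eqref{5.12} is correct and coincides with the paper's: apply It\^o's formula to $\|\u_n\|_{\H}^2$, exponentiate, and use the bounds $\Tr(\Q_n)\le\Tr(\Q)$, $\|\G_n^*\u_n\|_{\H}^2\le\|\Q\|_{\mathcal{L}(\H)}\|\u_n\|_{\H}^2$, Poincar\'e, and Young with the same choice of the free constant (your $\delta$ is the paper's $\epsilon/\lambda_1$). The paper phrases the last step as ``the process $\mathrm{V}_n(t)=e^{-\lambda_0 Ct}e^{\Psi_n(t)}$ has nonpositive drift, hence $\mathbb{E}^{\x}\mathrm{V}_n(t)\le\mathrm{V}_n(0)$'', whereas you localise and invoke Gronwall, but these are equivalent.

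For the convergence \eqref{419} you take a genuinely different route from the paper. The paper first \emph{takes expectation} of the Galerkin and limiting energy equalities (so the stochastic integrals disappear), then shows $\mathbb{E}[\text{LHS}_n]\to\mathbb{E}[\text{LHS}]$ by proving convergence of each term on the right-hand side using only the \emph{weak} convergences $\u_n\rightharpoonup\u$ in $\mathrm{L}^2(\Omega;\mathrm{L}^2(0,T;\V))$, etc., together with $\|\x_n\|_{\H}\to\|\x\|_{\H}$, $\Tr(\G_n\G_n^*)\to\Tr(\Q)$ and $\f_n\to\f$; it then extracts an a.s.\ convergent subsequence. You instead work \emph{pathwise}: you invoke (as confirmed by the paper's Remark following the lemma) the stronger a.s.\ convergence $\u_n\to\u$ in $\C([0,T];\H)$ from \cite{MTM1}, and you treat the stochastic integral explicitly via It\^o's isometry, showing $\int_0^t(\u_n,\G_n\d\W)\to\int_0^t(\u,\G\d\W)$ in $\mathrm{L}^2(\Omega)$ and hence a.s.\ along a further subsequence. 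Your approach is somewhat more direct, and it makes the passage from convergence on the right-hand side to a.s.\ convergence of the left-hand side transparent; the price is that you appeal to a stronger convergence input from \cite{MTM1} than the weak limits \eqref{420} used in the paper. Both routes hinge on the same structural observation you identify: the ``dissipative'' terms on the left are only weakly lower semicontinuous and must be traded, via the energy equality, for the linear terms on the right.
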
 
	\begin{proof} 
		Using the energy estimate and the Banach-Alaoglu theorem, one can easily extract a subsequence of $\{\u_n\}$ (still denoted by $\{\u_n\}$) such that 
		\begin{equation}\label{420}
		\left\{
		\begin{aligned}
		\u_n&\xrightarrow{w^*}\u\ \text{ in }\mathrm{L}^2(\Omega;\mathrm{L}^{\infty}(0,T;\H)),\\
		\u_n&\xrightarrow{w}\u\ \text{ in }\mathrm{L}^2(\Omega;\mathrm{L}^{2}(0,T;\V)),\\
		\u_n&\xrightarrow{w}\u\ \text{ in }\mathrm{L}^{r+1}(\Omega;\mathrm{L}^{r+1}(0,T;\wi\L^{r+1})),
		\end{aligned}\right.
		\end{equation}
		where $\u(\cdot)$ is the unique strong solution of the system \eqref{2.7} (by using the local monotonicity result \eqref{fe2} and a stochastic generalization of Minty-Browder technique, see \cite{MTM1} for more details). Note that the whole sequence also converges, since $\u_n(\cdot)$ and $\u(\cdot)$ are unique solutions to the systems  \eqref{5.8} and  \eqref{2.7}, respectively.	Let us apply finite dimensional  It\^o's formula to the process $\|\u_{n}(\cdot)\|_{\H}^2$ to obtain 
		\begin{align}\label{418}
		&	\|\u_{n}(t)\|^{2}_{\H}+ 2\mu \int_{0}^{t} \|\A^{\frac{1}{2}}\u_{n}(s)\|^{2}_{\H}\d s+ 2\beta \int_{0}^{t} \|\u_{n}(s)\|^{r+1}_{\L^{r+1}}\d s\nonumber\\&= \|\x_{n}\|^{2}_{\H} + 2\int_{0}^{t}(\f_{n}, \u_{n}(s) ) \d s +\Tr(\G_{n}\G^{*}_{n})t+ 2\int_{0}^{t}	( \G_{n} \d \W(s), \u_{n}(s)) ,
		\end{align}
		$\mathbb{P}$-a.s., where we used the fact that $
		\langle \B_{n}(\u_{n}),\u_{n}\rangle= 0$. 	Taking the expectation on both sides, we find
		\begin{align}\label{Lemma5.4a} \nonumber
		&	\mathbb{E}\left[\|\u_{n}(t)\|^{2}_{\H}+ 2\mu \int_{0}^{t} \|\A^{\frac{1}{2}}\u_{n}(s)\|^{2}_{\H}\d s+ 2\beta \int_{0}^{t} \|\u_{n}(s)\|^{r+1}_{\L^{r+1}}\d s\right]
		\nonumber \\&=	\|\x_{n}\|^{2}_{\H}+\Tr(\G_{n}\G^{*}_{n})t +2\mathbb{E}\bigg( \int_{0}^{t}( \f_{n}, \u_{n}(s) ) \d s\bigg),
		\end{align} where we used the fact that the final term appearing in the right hand side of the equality \eqref{418} is a local martingale. Similarly, for  the strong solution $\u(\cdot)$ of equation \eqref{2.7},  one can deduce that 
		\begin{align}\label{Lemma5.4b}\nonumber
		&\mathbb{E}\bigg[\|\u(t)\|^{2}_{\H}+ 2\mu \int_{0}^{t} \|\A^{\frac{1}{2}}\u(s)\|^{2}_{\H}\d s+ 2\beta \int_{0}^{t} \|\u(s)\|^{r+1}_{\L^{r+1}}\d s\bigg]\\&=\|\x\|^{2}_{\H}+\Tr(\G\G^{*})t \quad+2\mathbb{E}\bigg( \int_{0}^{t}(\f, \u(s) )\d s\bigg),
		\end{align}
		for all $t\in[0,T]$. From  \eqref{Lemma5.4a} and \eqref{Lemma5.4b}, it is immediate that 
		\begin{align}\label{Lemma5.4c} \nonumber
		&	\bigg|\mathbb{E}\bigg[\|\u_{n}(t)\|^{2}_{\H}+ 2\mu \int_{0}^{t} \|\A^{\frac{1}{2}}\u_{n}(s)\|^{2}_{\H}\d s+ 2\beta \int_{0}^{t} \|\u_{n}(s)\|^{r+1}_{\L^{r+1}}\d s\bigg] \nonumber\\&\quad-\mathbb{E}\bigg[\|\u(t)\|^{2}_{\H} + 2\mu \int_{0}^{t} \|\A^{\frac{1}{2}}\u(s)\|^{2}_{\H}\d s + 2\beta \int_{0}^{t} \|\u(s)\|^{r+1}_{\L^{r+1}}\d s\bigg]\bigg|\nonumber\\& =\left| \|\x_{n}\|^{2}_{\H}-\|\x\|^{2}_{\H}+\Tr(\G_{n}\G^{*}_{n})t -\Tr(\G\G^{*})t+2\mathbb{E}\bigg( \int_{0}^{t}( \f_{n}, \u_{n}(s) )-( \f, \u(s) ) \d s\bigg)  \right|\nonumber \\&\leq  |\|\x_{n}\|_{\H}-\|\x\|_{\H}|(\|\x_{n}\|_{\H}+\|\x\|_{\H})+ |(\Tr (\G_{n}\G^{*}_{n})-\Tr(\G\G^{*}))|t\nonumber\\& \quad+2\mathbb{E}\bigg(\int_{0}^{t}|(\f_{n}-\f,\u_{n}(s))|\d s\bigg) +2 \mathbb{E}\bigg(\int_{0}^{t}|(\f,\u_{n}(s)-\u(s))| \d s  \bigg).
		\end{align}
		It can be easily seen that 
		\begin{align*}
		|	\|\x_{n}\|_{\H}-\|\x\|_{\H} |\leq 	\|\x_{n}-\x\|_{\H} = \|\Pi_{n}\x-\x\|_{\H} =\left(\sum_{j=n+1}^{\infty}|(\x,e_j)|^2\right)^{1/2} \to 0 \ \text{ as }\  n \to \infty,
		\end{align*}
		and 
		\begin{align*}
		\mathbb{E}\bigg(\int_{0}^{t}|(\f_{n}-\f,\u_{n}(s))|\d s\bigg)\leq \|\f_{n}-\f\|_{\H}T \left\{\mathbb{E}\bigg(\sup_{t\in[0,T]}\|\u_{n}(s)\|_{\H}^2\bigg)\right\}^{1/2} \to 0\ \text{ as }\ n \to \infty,
		\end{align*}
		using \eqref{4.18}. Similarly, we find 
		\begin{align*}
		\Tr(\G_{n}\G^{*}_{n})-\Tr(\G\G^{*})& =\Tr(\Pi_{n}\G\G^{*}-\G\G^{*}) \leq \|\Pi_{n}-\I\|_{\mathcal{L}(\H)}\Tr(\G\G^{*}) \to 0 \ \text{  as }\  n \to \infty. 
		\end{align*}
		Finally, using the weak convergence given in \eqref{420}, we deduce that 
		\begin{align*}
		\mathbb{E} \bigg(\int_{0}^{t}|(\f,\u_{n}(s)-\u(s))|\d s\bigg) \to 0 \text{ as } n \to \infty.
		\end{align*} 
		Using all the above convergence in \eqref{Lemma5.4c}, we finally obtain 
		\begin{align*}
		&\mathbb{E}\bigg[\|\u_{n}(t)\|^{2}_{\H}+ 2\mu \int_{0}^{t} \|\A^{\frac{1}{2}}\u_{n}(s)\|^{2}_{\H}\d s+ 2\beta \int_{0}^{t} \|\u_{n}(s)\|^{r+1}_{\L^{r+1}}\d s\bigg]\\&  \to \mathbb{E}\bigg[\|\u(t)\|^{2}_{\H} + 2\mu \int_{0}^{t} \|\A^{\frac{1}{2}}\u(s)\|^{2}_{\H}\d s + 2\beta \int_{0}^{t} \|\u(s)\|^{r+1}_{\L^{r+1}}\d s\bigg] \ \text{  as }\  n\to \infty.
		\end{align*} Therefore along a subsequence, we deduce that 
		\begin{align*}
		&\|\u_{n_{k}}(t)\|^{2}_{\H}+ 2\mu \int_{0}^{t} \|\A^{\frac{1}{2}}\u_{n_{k}}(s)\|^{2}_{\H}\d s+ 2\beta \int_{0}^{t} \|\u_{n_{k}}(s)\|^{r+1}_{\L^{r+1}}\d s\\& \xrightarrow{\text{a.s.}}  \|\u(t)\|^{2}_{\H} + 2\mu \int_{0}^{t} \|\A^{\frac{1}{2}}\u(s)\|^{2}_{\H}\d s + 2\beta \int_{0}^{t} \|\u(s)\|^{r+1}_{\L^{r+1}}\d s\ \text{  as }\  k\to \infty,
		\end{align*} for all $t\in[0,T]$. The final convergence holds true for the sequence $\{\u_{n}\}$ itself, since $\u_{n}, \u$ are the unique  solutions of the systems \eqref{5.8} and \eqref{2.7}, respectively. 
		
		Let us define $$ \Z_n(t) :=\|\u_{n}(t)\|^{2}_{\H}+ \mu \int_{0}^{t} \|\A^{\frac{1}{2}}\u_{n}(s)\|^{2}_{\H}\d s+ \beta \int_{0}^{t} \|\u_{n}(s)\|^{r+1}_{\L^{r+1}}\d s.$$ Then from \eqref{418}, we infer that 
		\begin{align*}
		\Z_n(t) &= \|\x_{n}\|^{2}_{\H} -\mu \int_{0}^{t} \|\A^{\frac{1}{2}}\u_{n}(s)\|^{2}_{\H}\d s  - \beta \int_{0}^{t} \|\u_{n}(s)\|^{r+1}_{\L^{r+1}}\d s+\Tr(\Q_{n})t + 2\int_{0}^{t}( \f_{n}, \u_{n} (s)) \d s\\&\quad+ 2\int_{0}^{t}	( \G_{n} \d \W(s), \u_{n}(s)),
		\end{align*}
		$\mathbb{P}$-a.s., for all $t\in[0,T]$.	Applying It\^o's formula to the process $e^{\lambda_{0}\Z_n(t)},$ using chain rule, we obtain
		\begin{align}\label{425}
		\d (e^{\lambda_{0}\Z_n(t)})& = e^{\lambda_{0}\Z_n(t)}\bigg[\lambda_{0}\d \Z_n(t)+\frac{\lambda_{0}^{2}}{2} \d [\Z_n,\Z_n]_{t}\bigg]
		\nonumber	\\&
		=\lambda_0e^{\lambda_{0}\Z_n(t)}\big[ -\mu \|\A^{\frac{1}{2}}\u_{n}(t)\|^{2}_{\H}  - \beta \|\u_{n}(t)\|^{r+1}_{\L^{r+1}} +\Tr(\Q_{n}) + 2(\f_{n}, \u_{n}(t) )    \nonumber\\&\quad+2 \lambda_{0}\|\G^*_n\u_{n}(t)\|^{2}_{\H}\big]\d t+ 2\lambda_0e^{\lambda_{0}\Z_n(t)}	( \G_{n} \d \W(t), \u_{n}(t))  .
		\end{align}
		The following inequalities are easy to establish: 
		\begin{align}\label{5.9}
		\Tr(\Q_{n}) \leq \Tr(\Q),  \quad \|\x_{n}\|_{\H} \leq \|\x\|_{\H}, \quad \| \f_{n}\|_{\H} \leq \|\f\|_{\H}. 
		\end{align} 
		Furthermore, we obtain 
		\begin{align}\label{5.10}
		\|\G^*_{n}\u_{n}(t)\|^{2}_{\H} \leq \|\G^*_{n}\|^{2}_{\mathcal{L}(\H)} \|\u_{n}(t)\|^{2}_{\H} \leq \|\Q\|_{\mathcal{L}(\H)}\|\u_{n}(t)\|^{2}_{\H}.
		\end{align}
		Making use of Young's inequality, we find 
		\begin{align}\label{5.11}
		2|(\f_{n}, \u_{n}(t) )| \leq \epsilon \|\u_{n}(t)\|^{2}_{\H} +\frac{ \| \f_{n}\|^{2}_{\H}}{\epsilon}, \ \text{for all }\ \epsilon >0.
		\end{align}
		Using \eqref{5.9}-\eqref{5.11} in \eqref{425}, we obtain 
		\begin{align*}
		\d (e^{\lambda_{0}\Z_n(t)}) &\leq \lambda_0e^{\lambda_{0}\Z_n(t)}\bigg[  \bigg(-\mu \| \A^\frac{1}{2}\u_{n}(t)\|^{2}_{\H} +\Tr(\Q)+ 2\lambda_{0} \|\Q\|_{\mathcal{L}(\H)}\|\u_{n}(t)\|^{2}_{\H}  \\&\quad + \epsilon \|\u_{n}(t)\|^{2}_{\H} +\frac{ \|\f_{n}\|^{2}_{\H}}{\epsilon}\bigg)\d t+2	( \G_{n} \d \W(t), \u_{n}(t))   \bigg], 				
		\end{align*}
		
		For $\epsilon>0$, which will be fixed later, we estimate the drift of the process 
		\begin{align*}
		\mathrm{V}_{n}(t):= \exp\left\{-\lambda_{0}\bigg(\Tr(\Q)+\frac{\|\f\|^{2}_{\H}}{\epsilon}\bigg)t\right\}e^{\lambda_{0}\Z_n(t)}.
		\end{align*}
		By applying It\^o's product formula, we have  
		\begin{align*}
		\d \mathrm{V}_{n}(t) &= \exp\left\{-\lambda_{0}\bigg(\Tr(\Q)+\frac{\|\f\|^{2}_{\H}}{\epsilon}\bigg)t\right\}\d (e^{\lambda_{0}\Z_n(t)}) \nonumber\\&\quad- \lambda_{0}\bigg(\Tr(\Q)+\frac{\|\f\|^{2}_{\H}}{\epsilon}\bigg)\exp\left\{-\lambda_{0}\bigg(\Tr(\Q)+\frac{\|\f\|^{2}_{\H}}{\epsilon}\bigg)t\right\}e^{\lambda_{0}\Z_n(t)}\d t 
		\\&  \leq \lambda_{0}  \exp\left\{-\lambda_{0}\bigg(\Tr(\Q)+\frac{\|\f\|^{2}_{\H}}{\epsilon}\bigg)t\right\}\bigg[  -\mu\| \A^\frac{1}{2}\u_{n}(t)\|^{2}_{\H} + \epsilon \|\u_{n}(t)\|^{2}_{\H} +2\lambda_{0}\|\Q\|_{\mathcal{L}(\H)}\|\u_{n}(t)\|^{2}_{\H}\bigg]\d t\\&\quad +
		2 \lambda_{0}  \exp\left\{-\lambda_{0}\bigg(\Tr(\Q)+\frac{\|\f\|^{2}_{\H}}{\epsilon}\bigg)t\right\}e^{\lambda_{0}\Z_n(t)} ( \G_{n} \d \W(t), \u_{n}(t)).
		\end{align*}
		Using Poincar\'e's inequality (see \eqref{2.3}), we have $\|\u_{n}(t)\|^{2}_{\H} \leq \frac{ \|\A^{\frac{1}{2}}\u_{n}(t)\|^{2}_{\H} }{\lambda_{1}}$, and thus from the above inequality, we deduce that 
		\begin{align*}
		\d \mathrm{V}_{n}(t) &\leq
		\exp\left\{-\lambda_{0}\bigg(\Tr(\Q)+\frac{\|\f\|^{2}_{\H}}{\epsilon}\bigg)t\right\}e^{\lambda_{0}\Z_n(t)}\bigg[-\| \A^\frac{1}{2}\u_{n}(t)\|^{2}_{\H}  \bigg(\mu- \frac{2\lambda_{0}\|\Q\|_{\mathcal{L}(\H)}+\epsilon}{\lambda_{1}}\bigg)\d t\bigg] \\& \quad +2 \lambda_{0} \exp\left\{-\lambda_{0}\bigg(\Tr(\Q)+\frac{\|\f\|^{2}_{\H}}{\epsilon}\bigg)t\right\} e^{\lambda_{0}\Z_n(t)} ( \G_{n} \d \W(t), \u_{n}(t)). 
		\end{align*}                                                       
		Hence, for $ 0 <\lambda_{0} \leq \frac{\mu \lambda_{1}- \epsilon}{2\|\Q\|_{\mathcal{L}(\H)}}$, the drift of the process  $\mathrm{V}_{n}(t)$ is non positive. To be specific, for all $ \lambda_{0} \in \big(0 , \frac{\mu \lambda_{1}}{2\|\Q\|_{\mathcal{L}(\H)}}\big)$, the positive number $\epsilon := \mu\lambda_{1}-2 \lambda_{0}\|\Q\|_{\mathcal{L}(\H)} $ satisfies the above condition.  The choice of $\epsilon$ can be made in \eqref{5.11} also.   Thus, from the above inequality, we have     
		\begin{align*}
		\mathrm{V}_{n}(t) \leq \mathrm{V}_{n}(0)+ \int_{0}^{t} 2 \lambda_{0} \exp\left\{-\lambda_{0}\bigg(\Tr(\Q)+\frac{\|\f\|^{2}_{\H}}{\epsilon}\bigg)s\right\} e^{\lambda_{0}\Z_n(s)} ( \G_{n} \d \W(s), \u_{n}(s)).
		\end{align*} Taking the expectation on both sides and using the fact that the final term appearing in the right hand side of the above inequality is a local martingale, we get $$\mathbb{E}^{\x}[\mathrm{V}_{n}(t)] \leq     \mathbb{E}^{\x}[\mathrm{V}_{n}(0)], \ \text{ for all }\ t\in[0,T],$$ and hence the exponential estimate \eqref{5.12} follows. 
	\end{proof}
	\begin{remark}
	1. 	In fact, one can show that $$\u_n\xrightarrow{\mathrm{a.s.}}\u\ \text{ in }\  \C([0,T];\H)\cap\mathrm{L}^2(0,T;\V)\cap\mathrm{L}^{r+1}(0,T;\wi\L^{r+1}), \ \text{ as }\ n\to\infty,$$ see \cite{MTM1}, for more details. 
	
	2. If we consider the system \eqref{1p1}-\eqref{1p4} with Darcy coefficient $\rho>0$, then one has to choose $ \lambda_{0} \in \big(0 , \frac{\mu \lambda_{1}+\rho}{2\|\Q\|_{\mathcal{L}(\H)}}\big)$ in  \eqref{5.1}-\eqref{5.3} or \eqref{5.12}. 
	\end{remark}
	
	Let us now complete the proof of Proposition \ref{prop4.6} using Lemma \ref{lem4.9}. 
	
	\begin{proof}[Proof of Proposition \ref{prop4.6}]
		Note that the function 
		\begin{align*}
		\mathcal{J}(\u)&:= e^{-\lambda_{0}t\bigg(\Tr(\Q)+\frac{\|\f\|^{2}}{\mu \lambda_{1}-2\lambda_{0}\|\Q\|_{\mathcal{L}(\H)}}\bigg)} \exp\bigg(\mu \lambda_{0}\int_{0}^{t}\| \A^\frac{1}{2}\u(s)\|^{2}_{\H}\d s + \lambda_{0}\|\u(s)\|^{2}_{\H} \\& \quad+\beta \lambda_{0}\int_{0}^{t}\|\u(s)\|^{r+1}_{\L^{r+1}}\d s\bigg),
		\end{align*}     
		is a lower semicontinuous function (using the convergence \eqref{419}) and an increasing limit of the continuous functions
		\begin{align*}
		\mathcal{J}_{n}(\u)&:= e^{ -\lambda_{0}t\bigg(\Tr(\Q)+\frac{\|\f\|^{2}}{\mu \lambda_{1}-2\lambda_{0}\|\Q\|_{\mathcal{L}(\H)}}\bigg)} \exp\bigg(\mu \lambda_{0}\int_{0}^{t}\| \A^\frac{1}{2}\u_{n}(s)\|^{2}_{\H}\d s + \lambda_{0}\|\u_{n}(s)\|^{2}_{\H}\\& \quad+\beta \lambda_{0}\int_{0}^{t}\|\u_{n}(s)\|^{r+1}_{\L^{r+1}}\d s\bigg).
		\end{align*}
		Taking $n \to \infty $ in \eqref{5.12}, we find 
		\begin{align*}
		\mathbb{E}^{\x}[\mathcal{J}(\u)] \leq \liminf_{n\to\infty} \mathbb{E}^{\x}[\mathcal{J}(\u_{n})] \leq e^{\lambda_{0}\|\x\|^{2}_{\H}},
		\end{align*}                                                           
		and hence we obtain the required estimates \eqref{5.1}, \eqref{5.2} and \eqref{ESC}.                                              				
	\end{proof}
	\subsection{The LDP on $\M_{1}(\D(\A^{\alpha}))$}\label{sub4.4}
	As in the case of 2D NSE (\cite{BF}), our  proof of Theorem \ref{main-t} is two folded. In the first part, we establish the LDP for the occupation measures of the 2D SCBF equations \eqref{2.7} for the initial measure in $\mathrm{E} :=\M_{1}(\D(\A^{\alpha}))$. We complete  the proof of Theorem \ref{main-t} by extending the LDP for initial conditions, open and closed subsets in $\M_{1}(\H)$.
	\begin{lemma}\label{lem4.11}
		Let $ \f\in \H \text{ and let } \frac{1}{4} < \alpha < \frac{1}{2}$ for $r=1,2$ and $\frac{1}{3} < \alpha < \frac{1}{2}$ for $r=3$ be fixed numbers such that \eqref{1p5} holds. Let $ 0<\lambda_{0} < \frac{\mu \lambda_{0}}{2\|\Q\|_{\mathcal{L}(\H)}}$ and 
		\begin{align}\label{6.1}  
		\phi(\x) = e^{\lambda_{0}\|\x\|^{2}_{\H}},  \quad \mathcal{M}^{*}_{\lambda_{0},R} := \bigg\{\nu \in \M_{1}(\D(\A^{\alpha})): \int \phi(\x)\nu (\d \x) \leq R\bigg\}.
		\end{align}
		Then the family $\mathbb{P}_{\nu}(L_{T}\in \cdot) \text{ as } T\to +\infty$ satisfies the LDP on $\M_{1}(\D(\A^{\alpha}))$ with respect to the topology $\tau$ with the speed $T$ and the rate function $\J$, uniformly for any initial measure in $\mathcal{M}^{*}_{\lambda_{0},R},$ where $R >1$ is any fixed number, and $\J$ is the level-2 entropy of Donsker-Varadhan. More precisely, the statements (i), (ii) and (iii) of Theorem \ref{main-t}  hold true with $\M_{1}(\H)$ replaced by $\M_{1}(\D(\A^{\alpha}))$.
	\end{lemma}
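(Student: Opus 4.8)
The plan is to invoke the abstract hyper-exponential recurrence criterion, Theorem \ref{thm4.5} (i.e. Theorem 2.1 of \cite{LW1} in its uniform-over-initial-measures form from \cite{MG1}), with $\mathrm{E}=\D(\A^{\alpha})$ and $\mathcal{A}=\mathcal{M}^{*}_{\lambda_{0},R}$. By Theorem \ref{main} (proved via Propositions \ref{prop3.6} and \ref{prop3.7}), the Markov semigroup $\P_t$ is strong Feller and irreducible on $\D(\A^{\alpha})$, so hypothesis \eqref{4.4} holds. Thus everything reduces to verifying the two exponential-integrability bounds on hitting times: \eqref{4.5} $\sup_{\nu\in\mathcal{M}^{*}_{\lambda_0,R}}\mathbb{E}^{\nu}[e^{\lambda\tau_K}]<\infty$ for a suitable compact $K\Subset\D(\A^{\alpha})$ for every $\lambda>0$, and \eqref{4.6} $\sup_{\x\in K}\mathbb{E}^{\x}[e^{\lambda\tau_K^{(1)}}]<\infty$. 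Once these are established, conclusions (a1)--(a3) of Theorem \ref{thm4.5} are exactly statements (i)--(iii) of Theorem \ref{main-t} with $\M_1(\H)$ replaced by $\M_1(\D(\A^{\alpha}))$.

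First I would choose the compact set. A natural candidate is a ball in a smoother space: $K=K_\kappa:=\{\x\in\D(\A^{\frac{1}{4}+\frac{\alpha}{2}}):\|\A^{\frac{1}{4}+\frac{\alpha}{2}}\x\|_{\H}\le\kappa\}$, which by the compact Rellich--Kondrachov embedding $\D(\A^{\frac{1}{4}+\frac{\alpha}{2}})\Subset\D(\A^{\alpha})$ (recorded after \eqref{2.4}) is compact in $\D(\A^{\alpha})$; here $\kappa=\kappa(\lambda)$ will be taken large. The strategy for \eqref{4.5} is the classical one (cf. \cite{MG1,MG2}): use the exponential moment bound \eqref{5.2}--\eqref{5.3} of Proposition \ref{prop4.6}, namely $\sup_{\nu\in\mathcal{M}_{\lambda_0,R}}\mathbb{E}^{\nu}\{\exp(\mu\lambda_0\int_0^t\|\A^{\frac12}\u(s)\|_{\H}^2\d s)\}\le C_1^t R$. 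Because $\|\A^{\frac14+\frac{\alpha}{2}}\x\|_{\H}^2\le C\|\A^{\alpha}\x\|_{\H}\|\A^{\frac12}\x\|_{\H}$ (inequality \eqref{IP}) and, more usefully, because of the smoothing estimate in Proposition \ref{prop3.6}, the time-averaged $\D(\A^{\frac14+\frac{\alpha}{2}})$-norm of $\u$ over any unit interval is controlled by $\int\|\A^{\frac12+\alpha}\v\|_{\H}^2$ plus the noise data; combined with a Chebyshev-type bound and the Markov property this forces the solution to enter $K_\kappa$ within a geometrically distributed number of unit time steps, with the step failure probability driven below $e^{-\lambda}$ once $\kappa$ is large, which yields $\mathbb{E}^{\nu}[e^{\lambda\tau_{K_\kappa}}]\le$ const uniformly over $\mathcal{M}^{*}_{\lambda_0,R}$ (note $\mathcal{M}^{*}_{\lambda_0,R}\subset\mathcal{M}_{\lambda_0,R}$ after the trivial identification of measures on $\D(\A^\alpha)$ with measures on $\H$). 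For \eqref{4.6}, since $K_\kappa$ is a bounded subset of $\H$, the family $\{\delta_{\x}:\x\in K_\kappa\}$ lies in some $\mathcal{M}_{\lambda_0,R'}$, so the same argument applied after waiting one unit of time (to exploit the instantaneous regularization $\u(t)\in\D(\A^{\alpha})$ for $t>0$ from Proposition \ref{prop3.6}) gives $\sup_{\x\in K_\kappa}\mathbb{E}^{\x}[e^{\lambda\tau_{K_\kappa}^{(1)}}]<\infty$; I would follow the structure of the proof of the analogous lemma in \cite{MG1} essentially verbatim, the only genuinely new ingredient being that the nonlinear damping term $\beta\mathcal{C}(\u)$ contributes the extra good (nonnegative) term $\beta\lambda_0\int_0^t\|\u(s)\|_{\L^{r+1}}^{r+1}\d s$ in \eqref{5.1}--\eqref{ESC}, which only helps.

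The last assertion \eqref{Claim} of the theorem — $\J(\nu)<\infty\Rightarrow\nu\ll\varrho$, $\nu(\V)=1$, $\int_{\V}\|\A^{\frac12}\x\|_{\H}^2\d\nu<\infty$ — I would prove separately and it requires no restriction of the initial measure, so it is more natural to establish it once and for all in the extension step (the following subsection), but the absolute-continuity part is already Proposition \ref{prop4.4}, and the moment/support part follows from the Donsker--Varadhan Cramér-functional inequality \eqref{5.5}: applying \eqref{5.5} with $V_n(\x)=\mu\lambda_0(\|\A^{\frac12}\x\|_{\H}^2\wedge n)$ and letting $n\to\infty$ via Fatou gives $\mu\lambda_0\int\|\A^{\frac12}\x\|_{\H}^2\d\nu\le\J(\nu)+\lambda_0(\Tr(\Q)+\|\f\|_{\H}^2/(\mu\lambda_1-2\|\Q\|_{\mathcal{L}(\H)}\lambda_0))<\infty$, whence $\nu$ is carried by $\{\|\A^{\frac12}\x\|_{\H}<\infty\}=\V$. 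The main obstacle I anticipate is the quantitative verification of \eqref{4.5}: one must convert the exponential integrability of the time-averaged enstrophy (which lives in $\H$-based norms) into exponential integrability of the hitting time of a ball in the \emph{stronger} space $\D(\A^{\frac14+\frac{\alpha}{2}})$, and here the regularization estimate from Proposition \ref{prop3.6} — controlling $\sup_{t}\|\A^{\alpha}\v(t)\|_{\H}$ and $\int\|\A^{\alpha+\frac12}\v(t)\|_{\H}^2\d t$ by $\|\A^{\alpha}\x\|_{\H}$, the noise and the lower-order energy of $\u$ — must be used with care because of the nonlinear $\mathcal{C}$-term, which is exactly where the assumption \eqref{1p5} on $\alpha$ (sharpened to $\alpha>\frac13$ when $r=3$) is needed; one has to check that the exponents produced by the Gagliardo--Nirenberg/interpolation estimates in Proposition \ref{prop3.6} close up so that the bad terms can be absorbed and only $L^1_t$-in-time quantities with finite exponential moments remain.
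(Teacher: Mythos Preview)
Your overall architecture is right: invoke Theorem \ref{thm4.5} on $\mathrm{E}=\D(\A^{\alpha})$ using the strong Feller/irreducibility from Theorem \ref{main}, and reduce to the two hitting-time bounds \eqref{4.5}--\eqref{4.6}. The paper does exactly this. The difference is in the choice of the compact set, and your choice makes the argument harder than it needs to be.

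You take $K$ to be a ball in $\D(\A^{\frac14+\frac{\alpha}{2}})$ and then face the ``main obstacle'' of upgrading the exponential enstrophy bound \eqref{5.2} (which controls $\int_0^t\|\A^{\frac12}\u(s)\|_{\H}^2\d s$) to control occupation in a $\D(\A^{\frac14+\frac{\alpha}{2}})$-ball, via the pathwise regularization estimate of Proposition \ref{prop3.6}. That route is delicate because Proposition \ref{prop3.6} is a pathwise bound with a Gronwall exponential depending on $\int\|\A^{\frac12}\v\|_{\H}^2$; converting it into an exponential-moment statement for $\int\|\A^{\alpha+\frac12}\v\|_{\H}^2$ is not immediate. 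The paper sidesteps this entirely by taking $K=\{\x:\|\A^{\frac12}\x\|_{\H}\le M\}$, a ball in $\D(\A^{\frac12})$, which is already compact in $\D(\A^{\alpha})$ since $\alpha<\frac12$. Then $L_n(K^c)\le \frac{1}{M^2}\,\frac{1}{n}\int_0^n\|\A^{\frac12}\u(s)\|_{\H}^2\d s$ by Chebyshev on the occupation measure, and a single exponential Markov inequality combined with \eqref{5.2} gives $\mathbb{P}_{\nu}(\tau_K^{(1)}>n)\le \nu(e^{\lambda_0\|\cdot\|_{\H}^2})e^{-n\lambda_0 C}$ for an explicit $C=C(M)$ that can be made as large as desired by choosing $M$ large. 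Summation then yields both \eqref{4.5} and \eqref{4.6} (the latter because $\{\delta_{\x}:\x\in K\}\subset\mathcal{M}^*_{\lambda_0,R'}$ for a suitable $R'$, via Poincar\'e). No regularization estimate, no interpolation, and in particular no use of the $\alpha$-dependent Proposition \ref{prop3.6} is required at this step; the restriction on $\alpha$ is used only upstream to guarantee strong Feller/irreducibility.

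Finally, the material you sketch on \eqref{Claim} is not part of Lemma \ref{lem4.11}; in the paper it is handled separately in the next subsection (the absolute-continuity part via Proposition \ref{prop4.4}, the moment part via \eqref{5.5}), so you should drop it from this proof.
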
 
	\begin{proof}
		From Theorem \ref{main}, we know that the process  $\{\u_{t}\}_{t\geq 0}$ is strongly Feller and topologically irreducible in $\D(\A^{\alpha})$.  We infer from Theorem \ref{thm4.5} that, in order to establish LDP for our model in $\D(\A^{\alpha})$,  it is  enough to show the estimates  \eqref{4.5} and \eqref{4.6}.
		Let us define a  bounded subset $K$ of $\D(\A^{\frac{1}{2}})$  as
		\begin{align}\label{6.2}
		K:= \big\{\x \in \D(\A^{\frac{1}{2}}); \|\A^{\frac{1}{2}}\x\|_{\H}\leq M \big\},
		\end{align}
		where $M$ is a real number, which we will fix  later. We know that $\D(\A^{\frac{1}{2}}) $ is compactly embedded in $ \D(\A^{\alpha}),$ for $\alpha <\frac{1}{2}$ and since  $K$ is bounded, it is a compact subset in $\D(\A^{\alpha})$.
		From the definition of the occupation measure we have 
		\begin{align*}
		\mathbb{P}_{\nu}(\tau^{(1)}_{K}>n) \leq 	\mathbb{P}_{\nu}\bigg(L_{n}(K)\leq \frac{1}{n}\bigg) = 	\mathbb{P}_{\nu}\bigg(L_{n}(K^{c})\geq 1- \frac{1}{n}\bigg).
		\end{align*}
		For the set $K$ defined in \eqref{6.2}, an application of Markov's inequality yields  $L_{n}(K^{c}) \leq \frac{1}{M^{2}} L_{n}(\|\A^{\frac{1}{2}}\x\|^{2}_{\H})$. Hence, for any fixed $\lambda_{0}$ such that $ 0 < \lambda_{0} < \frac{\mu \lambda_{1}}{2\|\Q\|_{\mathcal{L}(\H)}}$, using Markov's inequality, we obtain 
		\begin{align}\label{432}
		\mathbb{P}_{\nu}(\tau^{(1)}_{K}>n) &\leq 	\mathbb{P}_{\nu}\bigg(L_{n}(\|\A^{\frac{1}{2}}\x\|^{2}_{\H})\geq M^{2}\bigg(1-\frac{1}{n}\bigg)\bigg)\nonumber \\& =\mathbb{P}_{\nu}\bigg(\frac{\mu\lambda_{0}}{n}\int_{0}^{n}\|\A^{\frac{1}{2}}\u(s)\|^{2}_{\H}\d s\geq \mu\lambda_0 M^{2}\bigg(1-\frac{1}{n}\bigg)\bigg)
		\nonumber	\\
		&\leq \exp{\bigg(-n\mu \lambda_{0}M^{2}\bigg(1-\frac{1}{n}\bigg)\bigg)}\mathbb{E}^{\nu}\left\{\exp \bigg(\mu\lambda_{0}\int_{0}^{n}\|\A^{\frac{1}{2}}\u(s)\|^{2}_{\H}\d s\bigg)\right\}.
		\end{align}
		For the initial measure $\nu \in \M_{1}(\D(\A^{\alpha}))$, integrating the estimate \eqref{5.2} with respect to $\nu(\d \x)$, we obtain \begin{align*}
		\mathbb{E}^{\nu}\left\{\exp \bigg(\mu\lambda_{0}\int_{0}^{n}\|\A^{\frac{1}{2}}\u(s)\|^{2}_{\H}\d s\bigg) \right\}\leq e^{n\lambda_{0}\big(\Tr(\Q)+ \frac{\|\f\|_{\H}^{2}}{ \mu \lambda_{1}-2\|\Q\|_{\mathcal{L}(\H)}\lambda_{0}}\big)}\nu\big(e^{\lambda_{0}\|\cdot\|^{2}_{\H}}\big).
		\end{align*}
		Substituting the above expression in \eqref{432}, we find  
		\begin{align}\label{6.3}
		\mathbb{P}_{\nu}(\tau^{(1)}_{K}>n) \leq \nu(e^{\lambda_{0}\|\cdot\|^{2}})e^{-n\lambda_{0}C}, \ \text{ for all }\ n \geq 2,
		\end{align}
		where 
		\begin{align}\label{434}
		C:= \frac{\mu M^{2}}{2}- \Tr(\Q) - \frac{\|\f\|^{2}}{\mu\lambda_{1}-2\|\Q\|_{\mathcal{L}(\H)}\lambda_{0}}.
		\end{align}
		Let $\lambda >0$ be fixed. Using the integration by parts formula and \eqref{6.3}, we have 
		\begin{align*}
		\mathbb{E}^{\nu}[ e^{\lambda\tau^{(1)}_{K}}] &= 1+ \int_{0}^{+\infty} \lambda e^{\lambda t}\mathbb{P}_{\nu}(\tau^{(1)}_{K}>t)\d t\leq 1+\sum_{n=0}^{\infty}  \lambda e^{\lambda n}\mathbb{P}_{\nu}(\tau^{(1)}_{K}>n) \\ &
		\leq e^{2\lambda} + \sum_{n\geq 2} \lambda e^{\lambda(n+1)}\mathbb{P}_{\nu}(\tau^{(1)}_{K}>n) \\ & \leq e^{2\lambda} \bigg(1+\lambda\nu (e^{\lambda_{0}\|\cdot\|^{2}})\sum_{n\geq 2}e^{-n(\lambda_{0}C-\lambda)} \bigg).
		\end{align*} 	
		From the definition \eqref{6.3}, we can choose the  $M$ appearing  in the definition \eqref{6.2}  such that $\lambda_{0}C- \lambda\geq 1$. Note that for $\x \in K$, using Poincar\'e's inequality, we have $ \|\x\|^{2}_{\H} \leq \frac{\|\A^{\frac{1}{2}}\x\|^{2}_{\H}}{\lambda_{1}} \leq \frac{M^{2}}{\lambda_{1}}$. Taking the supremum over the set $\{\nu=\delta_{\x},\x \in K\}$, we obtain
		\begin{align*}
		\sup_{\x\in K}\mathbb{E}^{\x} [e^{\lambda\tau^{(1)}_{K}}] \leq e^{2\lambda} \bigg(1+\lambda e^{\frac{\lambda_{0}M^{2}}{\lambda_{1}}}\sum_{n\geq 2}e^{-n(\lambda_{0}C-\lambda)}\bigg) < \infty,\end{align*}	
		and hence the bound \eqref{4.5} holds. We obtain \eqref{4.6} by using a similar  procedure.  By the definition of $\tau_K$, we know that  $ \tau_{K}	\leq \tau^{(1)}_{K}$ and  hence we deduce that 
		\begin{align*}
		\sup_{\nu \in \mathcal{M}^{*}_{\lambda_{0},R}} \mathbb{E}^{\nu}[e^{\lambda\tau_{K}}]
		& \leq  \sup_{\nu \in \mathcal{M}^{*}_{\lambda_{0},R}} \mathbb{E}^{\nu}[e^{\lambda\tau^{(1)}_{K}}]	\leq e^{2\lambda}\bigg(1+ \lambda R \sum_{n\geq 2}
		e^{-n(\lambda_{0}C-\lambda)}\bigg)	< \infty,\end{align*}
		which completes the proof.
	\end{proof}
	\subsection{Proof of Theorem \ref{main-t}}
	In this subsection, we  finish the proof of Theorem \ref{main-t}. In the previous subsection \ref{sub4.4}, we proved the LDP for initial measure in $\M_{1}(\D(\A^{\alpha}))$.  Now, we extend Lemma \ref{lem4.11} for initial conditions, open and closed subsets in $\M_{1}(\H)$, in order to establish the claim \eqref{Claim}. The first statement  $ \J(\nu) < \infty \Rightarrow \nu \ll \varrho $  in \eqref{Claim} is already proved in Proposition \ref{prop4.4} (see Proposition 3.1, \cite{MG1} also). The second statement that for  $\nu$ such that $ \J(\nu) < \infty $ implies $\nu(\|\A^{\frac{1}{2}}\x\|^{2}_{\H}) <\infty $ follows from \eqref{5.5}.	

	The proof of upper and lower bounds of LDP in Theorem \ref{main-t} can be carried out in a similar way as in \cite{MG1}, and hence we omit it here.

	\medskip\noindent
	{\bf Acknowledgments:} The first author would like to thank Ministry of Education, Government of India - MHRD for financial assistance. M. T. Mohan would  like to thank the Department of Science and Technology (DST), India for Innovation in Science Pursuit for Inspired Research (INSPIRE) Faculty Award (IFA17-MA110). 
		

\begin{thebibliography}{99}
		
		
		
		
		
		
		
		
		
		
		
		
		
		
		
		
		
		
		
		
		
		
		
		
		
		
		
		
		
		\bibitem{BD1}  A. Budhiraja  and P. Dupuis, 
		A variational representation for positive functionals of infinite
		dimensional Brownian motion, {\em Probab. and Math. Stat.}, {\bf 20}
		(2000) 39--61.
		
		
		
		
		
		
		
		
		
		
		
		
		\bibitem{Chow1} P.- L. Chow, Large deviation problem for some parabolic It\^o equations, \emph{Communications on Pure and Applied Mathematics}, {\bf XLV}  (1992) , 97--120.
		
		
		
		
		
		
		
		
		
		
		
		
		
		\bibitem{PGC} 	P. G. Ciarlet, \emph{Linear and Nonlinear	Functional Analysis	with Applications}, SIAM Philadelphia, 2013.
		
		
		
		\bibitem{ADOZ} A. Dembo, O. Zeitouni, \emph{Large deviations techniques and applications}, Second edition, Applications of Mathematics, 38, Springer-Verlag, 1998.
		
		\bibitem{JD}
		J.D. Deuschel and D.W. Stroock, \emph{Large deviations} in Pure Appl. Math., vol. \textbf{137}, Academic Press, San Diego, 1989.
		
		
		\bibitem{DaZ}
		\newblock G. Da Prato and J. Zabczyk,
		\newblock \emph{Stochastic Equations in Infinite Dimensions},
		\newblock Cambridge University Press, 1992.
		
		\bibitem{GDJZ}
		\newblock G. Da Prato and J. Zabczyk,
		\newblock \emph{Ergodicity for Infinite Dimensional Systems},
		\newblock London Mathematical Society Lecture Notes, {\bf 229}, Cambridge	University Press, 1996.
		
	
		
	\bibitem{GDAD}	G. Da Prato and A. Debussche, Ergodicity for the 3D stochastic Navier-Stokes equations, Journal de Math\'ematiques Pures et Appliqu\'ees, {\bf 82}(8) (2003), 877--947.
	
		\bibitem{GDP} G. Da Prato, \emph{Kolmogorov Equations for Stochastic PDEs}, Springer Basel AG, 2004. 
	
		
		
		
		
		
		
		\bibitem{ADe}
		\newblock A. Debussche,
		\newblock Ergodicity results for the stochastic Navier-Stokes equations: An introduction,
		\newblock Topics in Mathematical Fluid Mechanics, Volume {\bf 2073} of the series Lecture Notes in Mathematics, Springer,
		23--108, 2013.
		
		
		\bibitem{DZ} A. Dembo,  and  O. Zeitouni, 
		{\em Large Deviations Techniques and Applications}, Springer-Verlag,
		New York, 2000.
		
		\bibitem{DSW}  J. Deuschel  and D. W. Stroock, \emph{Large Deviations}, Boston: Academic Press, 1989.
		
		\bibitem{DE} P. Dupuis, R.S. Ellis, \emph{A Weak Convergence Approach to the Theory of Large Deviations}, Wiley-Interscience , New York, 1997.
		
		
		
		\bibitem{MDD}  M. D. Donsker and S.R.S. Varadhan, Asymptotic evalutation of certain Markov process expectations for large time, I-IV, \emph{Comm. Pure Appl. Math.}, {\bf 28} (1975), 1--47,
		279--301; {\bf 29} (1976) 389--461; {\bf 36} (1983) 183--212.
		
		\bibitem {BEL}
		\newblock  K. D. Elworthy and X.-M. Li, 
		\newblock Formulae for the derivatives of heat semigroups. 
		\newblock \emph{Journal of Functional Analysis}, {\bf 125} (1), (1994), 252--286.
		
		\bibitem {OHF} H. O. Fattorini and S. S. Sritharan,	Optimal control problems with state constraints in fluid mechanics and combustion, \emph{Appl Math Optim}, 38:159--192 (1998). 
		
		\bibitem{BF}
		B. Ferrario, Ergodic results for stochastic Navier-Stokes equation, \emph{Stochastics and Stochastics Reports}, \textbf{60},  271-288.
		
		\bibitem{BF1}
		B. Ferrario, Stochastic Navier-Stokes equations: analysis of the noise to have a unique invariant measure, \emph{Ann. Mat. Pura Appl. (4)} {\bf 177} (1999), 331--347.
		
		\bibitem{FF}	F. Flandoli, Dissipativity and invariant measures for stochastic Navier-Stokes equations, \emph{NoDEA Nonlinear Differential Equations Appl.}, {\bf 1} (1994) 403--428.
		
		\bibitem{FFBM}	F. Flandoli and B. Maslowski, Ergodicity of the 2D Navier–Stokes equation under random perturbation,	\emph{Comm. Math. Phys.}, {\bf 171} (1995), 119--141.
		
		\bibitem{DFHM} D. Fujiwara, H. Morimoto, An $L^r$-theorem of the Helmholtz decomposition of vector fields, \emph{J. Fac. Sci. Univ. Tokyo Sect. IA Math.,} {\bf 24}(1977),	685--700.
		
		\bibitem{HGHL} H. Gao and H. Liu,	Well-posedness and invariant measures for a class of stochastic 3D Navier-Stokes equations with damping driven by jump noise, \emph{Journal of Differential Equations}, {\bf 267} (2019), 5938--5975. 
		
		
		\bibitem{GM}	Y.  Giga, and  T.  Miyakawa, Solutions in $L_r$ of the Navier-Stokes initial value
		problem, \emph{Arch. Rutional Mech. Anal.}, {\bf 89} (1985), 267--281.
		
		\bibitem{MG2}
		M. Gourcy, Large deviation principle of occupation measure for stochastic Burgers equations, \emph{Annales de l'institut Henri Poincar\'e} \textbf{PR 43} (2007), 441-459.
		
		\bibitem{MG1}
		M. Gourcy, A large deviation principle for 2D stochastic Navier-Stokes equation, \emph{Stochastic Processes and their Applications}, {\bf  117} (2007), 904--927.
		
		\bibitem{MH} M. Hairer and J. C. Mattingly	Ergodicity of the	2D Navier-Stokes equations with degenerate stochastic forcing, \emph{Annals of Mathematics}, {\bf 164} (2006), 993--1032. 
		
		\bibitem{JVNV}	V. Jaksic, V. Nersesyan, C. Pillet, and A. Shirikyan, Large deviations and Gallavotti-Cohen principle for dissipative PDEs with rough noise, \emph{Comm Math Phys}, 2015, 336: 131--170. 
		
		\bibitem{JVNV1} V. Jaksic, V. Nersesyan, C. Pillet and A. Shirikyan, Large deviaitons from a stationary measure for a class of dissipative PDE's with random kicks, \emph{Comm. Pure Appl. Math.}, {\bf 68} (12) (2015) 2108-2143.
		
		\bibitem{KX} G. Kallianpur, J. Xiong, \emph{Stochastic Differential Equations in Infinite Dimensional Spaces}, {Institute Math. Stat.}, 1996.  
		
		\bibitem{CEK}	C. E. Kenig,  G. Ponce and  L. Vega,
		Well-posedness and scattering results for the generalized Korteweg-de Vries equation via the contraction principle,
	\emph{	Comm. Pure Appl. Math.}, {\bf 46} (4) (1993),  527--620.
		
		\bibitem{KKMTM}	K. Kinra and M. T. Mohan, Existence and upper semicontinuity of random attractors for the 2D stochastic convective Brinkman-Forchheimer equations in bounded domains, \emph{Submitted}, \url{https://arxiv.org/pdf/2011.06206.pdf}.
		
		
		
		\bibitem{OAL}	O. A. Ladyzhenskaya, \emph{The Mathematical Theory of Viscous Incompressible Flow}, Gordon and Breach, New York, 1969.
		
		\bibitem{LHGH} H. Liu and H. Gao, Ergodicity and dynamics for the stochastic 3D Navier-Stokes equations with damping, \emph{Commun. Math. Sci.}, {\bf 16}(1) (2018), 97--122.
		
		\bibitem{LHGH1}	H. Liu and H. Gao, Stochastic 3D Navier–Stokes equations with nonlinear damping: martingale solution, strong solution and small time LDP, Chapter 2 in \emph{Interdisciplinary Mathematical SciencesStochastic PDEs and Modelling of Multiscale Complex System}, 9--36, 2019.
		
		\bibitem{MTM1}
		M. T. Mohan, Stochastic convective Brinkman-Forchheimer equations, \emph{Submitted}, \url{https://arxiv.org/pdf/2007.09376.pdf}.
		
		\bibitem{MTM4}
	M. T. Mohan, 	Well-posedness and asymptotic behavior of the stochastic convective Brinkman-Forchheimer equations perturbed by pure jump noise, \emph{Submitted}, \url{https://arxiv.org/abs/2008.08577}
		
		\bibitem{MTM}
		M. T. Mohan, Wentzell-Freidlin large deviation principle for the stochastic convective Brinkman-Forchheimer equations, \emph{Submitted}, \url{https://arxiv.org/pdf/2011.07703.pdf}.
		
		\bibitem{MTM2}
		M. T. Mohan,	Asymptotic log-Harnack inequality for the stochastic convective Brinkman-Forchheimer equations with degenerate noise, \emph{Submitted}, \url{https://arxiv.org/pdf/2008.00955.pdf}. 
		
	\bibitem{MTM3}	M. T. Mohan, Moderate deviation principle for the 2D stochastic convective Brinkman-Forchheimer equations, \emph{Stochastics}, (2020) DOI: 10.1080/17442508.2020.1844708. 
		
			\bibitem{MSS} M. T. Mohan, K. Sakthivel  and S. S. Sritharan, Ergodicity for the 3D stochastic Navier-Stokes equations perturbed by L\'{e}vy noise,  \emph{Mathematische Nachrichten}, {\bf 292} (5), 1056--1088, 2019. 
		
		\bibitem{BM1}
		B. Maslowski, On probability distributions of solutions of semilinear stochastic evolution equations, \emph{Stoch.}, \textbf{45} (1993), 17-44.
		
		\bibitem{BM}
		B. Maslowski and J. Seidler, Ergodic proerties of recurrent solutions of  stochastic evolution equations, \emph{Osaka J. Math.}, \textbf{31} (1994), 969-1003.
		
		
		\bibitem{CMLP}	C. Morosi and L. Pizzocchero, On the constants for some fractional Gagliardo-Nirenberg and Sobolev inequalities, \emph{Expo. Math.}, {\bf 36} (2018), 32--77. 
		
		\bibitem{LN} L. Nirenberg, On elliptic partial differential equations, Ann. Scuola Norm. Sup. Pisa (3) {\bf 13} (1959), 115--162.
		
		
		\bibitem{MRTZ1}	M. R\"ockner, T. Zhang and X. Zhang,	Large deviations for stochastic tamed 3D Navier-Stokes equations, \emph{Applied Mathematics and Optimization}, {\bf 61} (2010), 267--285. 
		
		\bibitem{MRTZ}	 M. R\"ockner and T. Zhang, Stochastic 3D tamed Navier-Stokes equations: Existence, uniqueness and small time large deviations principles, \emph{Journal of Differential Equations}, {\bf 252} (2012), 716--744.
		
		\bibitem{ST} D. Stroock, \emph{An Introduction to the theory of Large Deviations}, Springer-Verlog, Universitext, New YOrk, 1984.
		
		\bibitem{SW} R. Sowers, Large Deviations for a reaction diffusion equation with non-Gaussian perturbations, \emph{Ann. Probab.} \textbf{20} (1992), 504-537.
		
		\bibitem{SSSP} S. S. Sritharan and P. Sundar,
		{Large deviations for the two-dimensional Navier-Stokes
			equations with multiplicative noise}, \emph{Stochastic Processes and their
			Applications}, {\bf 116} (2006), 1636--1659.
		
		\bibitem{RT}
		R. Temam, \emph{Navier-Stokes Equations, Theory and Numerical Functional Analysis}, 3rd. ed. North-Holland, Amesterdam (1984).
		
			\bibitem{Te1} R. Temam, 	\emph{Navier-Stokes Equations and Nonlinear Functional Analysis}, Second Edition, CBMS-NSF Regional Conference Series in Applied Mathematics, 1995.
		
		\bibitem{VA}
		S. R. S. Varadhan, 
		{\em Large deviations and Applications}, {\bf 46}, CBMS-NSF Series
		in Applied Mathematics, SIAM, Philadelphia, (1984).
		
		\bibitem{RWLX}	R. Wang and L. Xu  Asymptotics for stochastic reaction-diffusion equation driven by subordinate Brownian motion, \emph{Stochastic Process Appl}, 2018, {\bf 128}, 1772--1796. 
		
		\bibitem{WA}
		R. Wang, J. Xiong and L. Xu, Large deviation principle of occupation measures for non-linear monotone SPDEs, \emph{Sci China Math}, 2020, 63.
		
		
		\bibitem{LW1}
		L. Wu, Large and moderate deviations and exponential convergence for stochastic damping Hamiltonian systems, \emph{Stochastic Process Appl.} \textbf{91} (2001), 205-238. 
	\end{thebibliography}
\end{document}